\title[A wavelet basis for $C^n$-functions and $n$-th Lipschitz functions]{A wavelet basis for non-Archimedean $C^n$-functions and $n$-th Lipschitz functions}
\date{}
\author{Hiroki Ando and Yu Katagiri}
\newtheorem{thm}{Theorem}[section]
\newtheorem*{thm*}{Theorem}
\newtheorem{lem}[thm]{Lemma}
\newtheorem*{lem*}{Lemma}
\newtheorem{prop}[thm]{Proposition}
\newtheorem*{prop*}{Proposition}
\newtheorem{cor}[thm]{Corollary}
\newtheorem{cor*}{Corollary}
\theoremstyle{definition}
\newtheorem{dfn}[thm]{Definition}
\newtheorem*{dfn*}{Definition}
\newtheorem{ex}[thm]{Example}
\newtheorem*{ex*}{Example}
\newtheorem{rmk}[thm]{Remark}
\newtheorem*{rmk*}{Remark}
\makeatletter\@namedef{subjclassname@2020}{\textup{2020} Mathematics Subject Classification}\makeatother
\keywords{wavelet basis, $C^n$-functions, difference quotient, $n$-th Lipschitz functions.}
\subjclass[2020]{Primary: 11S80}
\begin{document}
\maketitle

\begin{abstract}
A wavelet basis is a basis for the $K$-Banach space $C(R, K)$ of continuous functions from a complete discrete valuation ring $R$ whose residue field is finite to its quotient field $K$. In this paper, we prove a characterization of $n$-times continuously differentiable functions from $R$ to $K$ by the coefficients with respect to the wavelet basis and give an orthonormal basis for $K$-Banach space $C^n(R, K)$ of $n$-times continuously differentiable functions.
\end{abstract}

\section{Introduction}

Let $K$ be a local field, i.e., the quotient field of a complete discrete valuation ring $R$ whose residue field $\kappa$ is finite of cardinality $q$. One equips $K$ with the non-Archimedean norm $|\cdot|$ normalized so that $|\pi|=q^{-1}$ for a uniformizer $\pi$ of $K$. The $K$-vector space $C(R, K)$ of continuous functions from $R$ to $K$, equipped with the supremum norm $|f|_{\sup}=\sup_{x \in R}\{|f(x)|\}$, is a $K$-Banach space. %Here, a $K$-Banach space means a complete normed $K$-vector space $B$ whose norm $||\cdot||$ satisfies the ultrametric triangle inequality $||v+w|| \leq \max\{||v||, ||w||\}$ for any $v, w \in B$.
We employ the following definition for the $C^n$-functions (or $n$-times continuously differentiable functions) as follows.

\begin{dfn}[{\cite[Definition 29.1]{Sc84}}]
For a positive integer $n$, set 
\begin{align}\label{btd}
\bigtriangledown^nR \coloneqq \{ (x_1, \cdots , x_n) \in R^n \mid \text{if} \ i \neq j \ \text{then} \ x_i \neq x_j \}.
\end{align}
The {\it $n$-th difference quotient} $\Phi_nf : \bigtriangledown^{n+1}R \rightarrow K$ of a function $f : R \rightarrow K$ is inductively given by $\Phi_0f \coloneqq f$ and by
\begin{align}\label{n-thquot}
\Phi_nf(x_1, \cdots, x_n, x_{n+1})=\frac{\Phi_{n-1}f(x_1, x_3, \cdots, x_{n+1})-\Phi_{n-1}f(x_2, x_3, \cdots, x_{n+1})}{x_1-x_2}
\end{align}
for $n \in \mathbb{Z}_{>0}$. For $n \geq 0$, a function $f : R \rightarrow K$ is a {\it $C^n$-function} (or an {\it $n$-times continuously differentiable function}) if $\Phi_nf$ can be extended to a continuous function from $R^{n+1}$ to $K$. We denote the set of all $C^n$-functions $R \rightarrow K$ by $C^n(R, K)$ for $n \geq 0$. Note that $C^0(R, K)=C(R, K)$. We define a continuous function $D_nf : R \rightarrow K$ to be $D_nf(x)=\Phi_nf(x, \cdots, x)$ for $f \in C^n(R, K)$.
\end{dfn}

\begin{rmk}\label{rmkPhi}
\begin{enumerate}
\item The $n$-th difference quotient $\Phi_nf$ is a symmetric function of its $n+1$ variables for any $f : R \rightarrow K$ and we have $C^{n+1}(R, K) \subset C^n(R, K)$ for $n \geq 0$ (\cite[Lemma 29.2]{Sc84}).
\item Let $n \geq 1$ and $f \in C(R, K)$. Then, $f \in C^n(R, K)$ if and only if the limit
\begin{align*}
\lim_{\substack{(x_1, \cdots, x_n, x_{n+1}) \to (a, \cdots, a) \\ (x_1, \cdots, x_n, x_{n+1}) \in \bigtriangledown^{n+1}R}} \Phi_nf(x_1, \cdots, x_n, x_{n+1})
\end{align*}
exists for each $a \in R$.
\item For $f \in C(R, K)$, we say that $f$ is ($1$-times) differentiable and write $f^{(1)}=f' : R \rightarrow K$ if the limit
\begin{align*}
f'(a) \coloneqq \lim_{x \to a}\frac{f(x)-f(a)}{x-a}
\end{align*}
exists for any $a \in R$. We define $n$-times differentiable functions inductively as follows. For $n \geq 1$ and an $n$-times differentiable function $f : R \rightarrow K$, we say $f$ is $(n+1)$-times differentiable and write $f^{(n+1)}=(f^{(n)})' : R \rightarrow K$ if the limit
\begin{align*}
f^{(n+1)}(a) \coloneqq \lim_{x \to a}\frac{f^{(n)}(x)-f^{(n)}(a)}{x-a}
\end{align*}
exists for any $a \in R$.
\item Let $n \geq 1$. If $f \in C^n(R, K)$, then $f$ is $n$-times differentiable and 
\begin{align}\label{Dandder}
j!D_jf=f^{(j)}
\end{align}
for any $1 \leq j \leq n$ (\cite[Theorem 29.5]{Sc84}).
\item Let $n \geq 1$ and $f \in C^n(R, K)$. For any $0 \leq j \leq n$, we have $D_{n-j}f \in C^j(R, K)$ and 
\begin{align}\label{propertyD_j}
D_jD_{n-j}f=\binom{n}{j}D_nf
\end{align}
(\cite[Theorem 78.2]{Sc84}).
\item Contrary to the Archimedean case, an $n$-times differentiable function $f : R \rightarrow K$ whose $n$-th derivative $f^{(n)}$ is continuous is not $C^n$ in general. For example, the function defined in \cite[Example 26.6]{Sc84} is not $C^1$. See also \cite[Section 29]{Sc84}.
\item There is another notion of $C^n$-function (see e.g. \cite{BB10}, \cite{Co14} and \cite{Na18}), which we will not discuss in the present paper.
\end{enumerate}
\end{rmk}

It is known that $C^n(R, K)$ is also a $K$-Banach space for each $n \geq 0$, with respect to the norm $|\cdot|_{C^n}$, where 
\begin{align}\label{classical norm}
|f|_{C^n}=\max_{0 \leq j \leq n}\{|\Phi_jf|_{\sup}\}
\end{align}
for $f \in C^n(R, K)$ and $|\Phi_jf|_{\sup}=\sup_{x \in R^{j+1}}\{|\Phi_jf(x)|\}$ for $0 \leq j \leq n$ (\cite[Exercise 29.C]{Sc84}).

We employ the following definition for the orthonormal basis for a $K$-Banach space as follows.

\begin{dfn}[{\cite[Section 50]{Sc84}}]
Let $B$ be a $K$-Banach space whose norm is $||\cdot||$.
\begin{enumerate}
\item For $x, y \in B$, we write $x \perp y$ if $||x|| \leq ||x-\lambda y||$ for any $\lambda \in K$. The orthogonality relation $\perp$ is symmetric.
\item A subset $\{x_1, x_2, \cdots\} \subset B$ is called {\it orthogonal} if $x_i \perp y$ for any $i \geq 1$ and any $y \in \oplus_{j \neq i}Kx_j$. In addition, we say that a subset $\{x_1, x_2, \cdots\} \subset B$ is {\it orthonormal} if $||x_i||=1$ for each $i \geq 1$.
\item A subset $\{x_1, x_2, \cdots\} \subset B$ whose elements are nonzero is called an {\it orthogonal} (resp. {\it orthonormal}) {\it basis} of $B$ if $\{x_1, x_2, \cdots\}$ is orthogonal (resp. orthonormal) set in $B$ and every element $x \in B$ can be expressed as a convergent sum $x=\sum_{n=1}^\infty c_nx_n$ for some sequence $\{c_n\}_{n \geq 1}$ in $K$.
\end{enumerate}
\end{dfn}

\begin{rmk}
Let $B$ be a $K$-Banach space whose norm is $||\cdot||$ and $\{x_1, x_2, \cdots\} \subset B$.
\begin{enumerate}
\item If $\{x_1, x_2, \cdots\}$ is an orthonormal basis of $B$, then $x \in B$ has a unique representation as a convergent sum $x=\sum_{n=1}^\infty c_nx_n$, where $c_n \in K$ and $c_n \to 0$ (\cite[Proposition 50.6]{Sc84}).
\item Suppose that $||x_i||=1$ for all $i \geq 1$. Then $\{x_1, x_2, \cdots\}$ is orthonormal in $B$ if and only if $||\sum_{n=1}^\infty c_nx_n||=\sup_{n \geq 1}\{|c_n|\}$ for each sequence $\{c_n\}_{n \geq 1}$ in $K$ with $c_n \to 0$. This follows from \cite[Proposition 50.4]{Sc84}.
\end{enumerate}
\end{rmk}

\if0
%We will give examples of orthonormal basis. 
Let $p$ be a prime. For $n \geq 0$, we define locally constant functions $e_n \in C(\mathbb{Z}_p, \mathbb{Q}_p)$ to be $e_0(x)=1$ and to be the characteristic function of the ball $\{x \in \mathbb{Z}_p \mid |x-n|<n^{-1}\}$ if $n \geq 1$. The  functions $\{e_n \mid n \geq 0\}$ form an orthonormal base of $C(\mathbb{Z}_p, \mathbb{Q}_p)$ and are called the {\it van der Put base} (\cite[Theorem 62.2]{Sc84}). Moreover, if $f \in C(\mathbb{Z}_p, \mathbb{Q}_p)$ has the representation $f(x)=\sum_{n=0}^\infty b_n(f)e_n(x)$, then we have $b_0(f)=f(0)$ and $b_n(f)=f(n)-f(n_-)$ for $n \geq 1$. Here, $n_-=\sum_{i=0}^{m-1}a_ip^i$ if $n$ has the $p$-adic expansion $n=\sum_{i=0}^ma_ip^i$ with $a_i \in \{0, \cdots, p-1\}$ and $a_m \neq 0$. Let
\begin{align*}
\gamma_n \coloneqq 
\begin{cases}
1 & \text{if} \ n=0 \\
n-n_- & \text{if} \ n \in \mathbb{Z}_{>0}.
\end{cases}
\end{align*}
The following theorems give a characterization of $C^1$-functions by the van der Put coefficients and an orthonormal base of $C^1(\mathbb{Z}_p, \mathbb{Q}_p)$.

\begin{thm}[{\cite[Exercise 63.A]{Sc84}}]\label{charC^1}
Let $f(x)=\sum_{n=0}^\infty b_n(f)e_n(x) \in C(\mathbb{Z}_p, \mathbb{Q}_p)$. Then, $f \in C^1(\mathbb{Z}_p, \mathbb{Q}_p)$ if and only if the limit $\lim_{\substack{n \to a \\ a \neq n \in \mathbb{Z}_{>0}}} b_n(f)\gamma_n^{-1}$ exists for each $a \in \mathbb{Z}_p$.
\end{thm}

\begin{thm}[{\cite[Theorem 68.1, Corollary 68.2]{Sc84}}]\label{baseforC^1}
The set $\{\gamma_ne_n(x), (x-n)e_n(x) \mid n \geq 0\}$ is an orthonormal base of $C^1(\mathbb{Z}_p, \mathbb{Q}_p)$. Moreover, if $f \in C^1(\mathbb{Z}_p, \mathbb{Q}_p)$ has the expansion $f(x)=\sum_{n=0}^\infty c_n(f)\gamma_ne_n(x)+\sum_{n=0}^\infty d_n(f)(x-n)e_n(x)$, then we have
\begin{align*}
c_n(f)=
\begin{cases}
f(0) & n=0 \\
\Phi_1f(n, n_-)-f'(n_-) & n \in \mathbb{Z}_{>0}
\end{cases}
\end{align*}
and
\begin{align*}
d_n(f)=
\begin{cases}
f'(0) & n=0 \\
f'(n)-f'(n_-) & n \in \mathbb{Z}_{>0}.
\end{cases}
\end{align*}
\end{thm}

In \cite{DS94}, De Smedt proved the following theorems. %もう少し何か書きたい．

\begin{thm}[{\cite[Theorem 6]{DS94}}]\label{charC^2}
Let $f(x)=\sum_{n=0}^\infty c_n(f)\gamma_ne_n(x)+\sum_{n=0}^\infty d_n(f)(x-n)e_n(x) \in C^1(\mathbb{Z}_p, \mathbb{Q}_p)$. Then, $f \in C^2(\mathbb{Z}_p, \mathbb{Q}_p)$ if and only if the limits $\lim_{\substack{n \to a \\ a \neq n \in \mathbb{Z}_{>0}}} c_n(f)\gamma_n^{-1}$ and $\lim_{\substack{n \to a \\ a \neq n \in \mathbb{Z}_{>0}}} d_n(f)\gamma_n^{-1}$ exist for each $a \in \mathbb{Z}_p$ and satisfy $\lim_{\substack{n \to a \\ a \neq n \in \mathbb{Z}_{>0}}} d_n(f)\gamma_n^{-1}=2\lim_{\substack{n \to a \\ a \neq n \in \mathbb{Z}_{>0}}} c_n(f)\gamma_n^{-1}$.
\end{thm}

\begin{thm}[{\cite[Theorem 8]{DS94}}]\label{baseforC^2}
The set $\{\gamma_n^2e_n(x), \gamma_n(x-n)e_n(x), (x-n)^2e_n(x) \mid n \geq 0\}$ is an orthonormal base of $C^2(\mathbb{Z}_p, \mathbb{Q}_p)$.
\end{thm}

\begin{rmk}
Theorem \ref{charC^1} and Theorem \ref{charC^2} were stated in \cite{Sc84} and \cite{DS94} respectively, by substituting ``$\lim_{\substack{n \to a \\ n \in \mathbb{Z}_{>0}}}$'' for ``$\lim_{\substack{n \to a \\ a \neq n \in \mathbb{Z}_{>0}}}$''. However, the limit $\lim_{\substack{n \to a \\ n \in \mathbb{Z}_{>0}}} b_n(f)\gamma_n^{-1}$ does not exist for $f \in C^1(\mathbb{Z}_p, \mathbb{Q}_p)$ in general. For example, $e_1(x) \in C^1(\mathbb{Z}_p, \mathbb{Q}_p)$ and $\lim_{\substack{n \to 1 \\ n \in \mathbb{Z}_{>0}}} b_n(e_1)\gamma_n^{-1}$ does not exist.
\end{rmk}
\fi

%One of our goals in this paper is to generalize these theorems to any local field and $C^n(R, K)$ for each $n \geq 0$.

Fix a uniformizer $\pi$ of $K$ and let $\mathcal{T}$ be a set of representatives, containing $0 \in R$, of $\kappa$ in $R$. Set
\begin{align*}
\mathcal{R}_m=
\begin{cases}
\{0\} & m=0 \\
\left\{ \displaystyle\sum_{i=0}^{m-1} a_i\pi^i \ \middle | \ a_i \in \mathcal{T} \right\} & m \geq 1,
\end{cases}
\end{align*}
$\mathcal{R} \coloneqq \cup_{m \geq 0}\mathcal{R}_m$ and $\mathcal{R}_+ \coloneqq \cup_{m \geq 1}\mathcal{R}_m$. For $x \in R$, we call the expansion $x=\sum_{i=0}^\infty a_ip^i$ with $a_i \in \mathcal{T}$ ``the $\pi$-adic expansion of $x$'' in this paper. In \cite{dS16}, the following orthonormal basis of $C(R, K)$, which is called the {\it wavelet basis}, was introduced.

\begin{dfn}[{\cite[Section 2]{dS16}}]
Define the {\it length} of $r \in \mathcal{R}$ by
\begin{align}\label{defl(r)}
l(r)=m
\end{align}
where $m$ is such that $r \in \mathcal{R}_m \setminus \mathcal{R}_{m-1}$. The {\it wavelet basis} is defined to be the set of functions $\{\chi_r \mid r \in\mathcal{R}\}$, where $\chi_r$ is the characteristic function of the disk $D_r \coloneqq \left\{x\in R \mid |x-r| \leq |\pi|^{l(r)}\right\}$.
\end{dfn}

\begin{rmk}
\begin{enumerate}
%\item When $K=\mathbb{Q}_p$, $\pi=p$ and $\mathcal{T}=\{0, 1, \cdots, p-1\}$, the wavelet base for $C(\mathbb{Z}_p, \mathbb{Q}_p)$ is called the van der Put base.
\item For a sequence $\{c_r\}_{r \in \mathcal{R}}$ in $K$, the infinite sum $\sum_{r \in \mathcal{R}}c_r\chi_r$ converges (in $C(R, K)$ with respect to the supremum norm on $R$) if and only if for any $\varepsilon>0$ there exists a finite set $S_{\varepsilon} \subset \mathcal{R}_+$ such that $|c_r|<\varepsilon$ for any $r \in \mathcal{R}_+ \setminus S_{\varepsilon}$.
\item By the same argument as the proof of \cite[Theorem 62.2]{Sc84}, if $f \in C(R, K)$ has the expansion $f=\sum_{r \in \mathcal{R}} b_r(f)\chi_r$, we see that
\begin{align*}
b_r(f)=
\begin{cases}
f(0) & r=0 \\
f(r)-f(r_-) & r \in \mathcal{R}_+.
\end{cases}
\end{align*}
Here, 
\begin{align}\label{defr_-}
r_-=\sum_{i=0}^{m-1}a_i\pi^i
\end{align}
if $r$ has the $\pi$-adic expansion $r=\sum_{i=0}^ma_ip^i$ with $a_m \neq 0$. 
\end{enumerate}
\end{rmk}

To describe our main results, we introduce some notation. Let
\begin{align}\label{defgamma}
\gamma_r=
\begin{cases}
1 & \text{if} \ r=0 \\
r-r_- & \text{if} \ r \in \mathcal{R}_+.
\end{cases}
\end{align}
For $f \in C^n(R, K)$ and $j=0, \cdots, n$, define the continuous function $\psi_jf : \bigtriangledown^2R \rightarrow K$ inductively by $\psi_0f(x, y)\coloneqq \Phi_1f(x, y)$ and
\begin{align}
\psi_jf(x, y)&\coloneqq\frac{f(x)-f(y)-\sum_{l=1}^j(x-y)^lD_lf(y)}{(x-y)^{j+1}} \label{defpsi_jf1} \\
&=\frac{\psi_{j-1}f(x, y)-D_jf(y)}{x-y} \label{defpsi_jf2}
\end{align}
for $j \geq 1$. Note that $\psi_nf(x, y)=\Phi_{n+1}f(x, y, \cdots, y)$. 
One of our main results in this paper is the following.

\begin{thm}\label{main}
Let $n \geq 0$. If $\operatorname{char}(K)=p>0$, we also assume that $n \leq p-1$.
\begin{enumerate}
\item The set $\{\gamma_r^n\chi_r(x), \gamma_r^{n-1}(x-r)\chi_r(x), \cdots, (x-r)^n\chi_r(x) \mid r \in \mathcal{R}\}$ is an orthonormal basis for $C^n(R, K)$. Here, $C^n(R, K)$ is equipped with the supremum norm on $R$ if $n=0$ and the norm $|\cdot|_n$ given by $(\ref{def of norm})$ if $n \geq 1$ (in other words, the norm $|\cdot|_n$ is inductively defined by using the assertion (4) for $n-1$).
\item If $f \in C^n(R, K)$ has the representation $f(x)=\sum_{r \in \mathcal{R}}\sum_{j=0}^n b_r^{n, j}(f)\gamma_r^{n-j}(x-r)^j\chi_r(x)$, then we have
\begin{align*}
b_r^{n, j}(f)=
\begin{cases}
D_jf(0) & \text{if} \ r=0 \\
\gamma_r \psi_{n-j}D_jf(r, r_-) & \text{if} \ r \neq 0
\end{cases}
\end{align*}
for each $0 \leq j \leq n$.
\item Let $f=\sum_{r \in \mathcal{R}}\sum_{j=0}^n b_r^{n, j}(f)\gamma_r^{n-j}(x-r)^j\chi_r \in C^n(R, K)$. Then $f \in C^{n+1}(R, K)$ if and only if the limits $\lim_{\substack{r \to a \\ a \neq r \in \mathcal{R}_+}}b_r^{n, j}(f)\gamma_r^{-1}$ exist for all $a \in R$ and $0 \leq j \leq n$ and satisfy
\begin{align*}
\lim_{\substack{r \to a \\ a \neq r \in \mathcal{R}_+}}b_r^{n, j}(f)\gamma_r^{-1}=\binom{n+1}{j}\lim_{\substack{r \to a \\ a \neq r \in \mathcal{R}_+}}b_r^{n, 0}(f)\gamma_r^{-1}.
\end{align*}
\item Let $f=\sum_{r \in \mathcal{R}}\sum_{j=0}^n b_r^{n, j}(f)\gamma_r^{n-j}(x-r)^j\chi_r \in C^{n+1}(R, K)$. Then 
\begin{align}\label{def of norm}
|f|_{n+1} \coloneqq \sup_{r \in \mathcal{R}}\{|b_r^{n, 0}(f)\gamma_r^{-1}|, \cdots, |b_r^{n, n}(f)\gamma_r^{-1}|\}<\infty 
\end{align}
is a norm on $C^{n+1}(R, K)$. Moreover, $C^{n+1}(R, K)$ is a Banach space over $K$ with respect to the norm $|\cdot|_{n+1}$.
\end{enumerate}
\end{thm}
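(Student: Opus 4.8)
The plan is to establish all four assertions simultaneously by induction on $n$, since assertion (4) at level $n-1$ is exactly what supplies the norm $|\cdot|_n$ appearing in the statement of assertion (1) at level $n$. The four claims must therefore be unwound in the order: (4) at level $n-1$, then (1), (2), (3) at level $n$, then (4) at level $n$. Consequently, in proving (4) at level $n$ I may use assertions (1)--(3) at level $n$ together with the fact (which is (4) at level $n-1$) that $(C^n(R,K),|\cdot|_n)$ is already a Banach space, and with the orthonormality in (1) at level $n$, which gives $|h|_n=\sup_{r,j}|b_r^{n,j}(h)|$ for $h\in C^n(R,K)$. The entire argument is driven by one identity extracted from the recursion $(\ref{defpsi_jf2})$, rewritten as $\psi_{m-1}G(x,y)=(x-y)\psi_mG(x,y)+D_mG(y)$, applied to $G=D_jf$ with $m=n-j$, together with $(\ref{propertyD_j})$ in the form $D_{n-j}D_jf=\binom{n}{j}D_nf$: for $r\neq 0$ and $0\le j\le n-1$,
\[
b_r^{n-1,j}(f)\gamma_r^{-1}=\gamma_r\,b_r^{n,j}(f)\gamma_r^{-1}+\binom{n}{j}D_nf(r_-),
\]
using $\psi_{n-j}D_jf(r,r_-)=b_r^{n,j}(f)\gamma_r^{-1}$ from assertion (2).

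I split assertion (4) into finiteness, the norm axioms, and completeness. For finiteness of $(\ref{def of norm})$, assertion (3) at level $n$ applied to $f\in C^{n+1}(R,K)$ gives that $\lim_{r\to a}b_r^{n,j}(f)\gamma_r^{-1}$ exists for every $a\in R$ and every $0\le j\le n$; hence around each $a$ there is a ball on which $|b_r^{n,j}(f)\gamma_r^{-1}|$ (for $r\neq a$) is bounded. Covering the compact space $R$ by finitely many such balls and treating the finitely many centres separately bounds $\sup_r|b_r^{n,j}(f)\gamma_r^{-1}|$, and the maximum over the finitely many $j$ yields $|f|_{n+1}<\infty$. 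That $|\cdot|_{n+1}$ is an ultrametric norm is then immediate: each $f\mapsto b_r^{n,j}(f)$ is $K$-linear, so $|\cdot|_{n+1}$ is a supremum of absolute values of linear functionals and satisfies homogeneity and the strong triangle inequality, while definiteness holds because $|f|_{n+1}=0$ forces every $b_r^{n,j}(f)=0$, whence $f=0$ by the uniqueness of the expansion in assertion (1).

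For completeness I first use the displayed identity to prove the domination $|g|_n\le|g|_{n+1}$ for all $g\in C^{n+1}(R,K)$: the term $\gamma_r\,b_r^{n,j}(g)\gamma_r^{-1}$ has absolute value $\le|\gamma_r|\,|g|_{n+1}\le|g|_{n+1}$, and $|D_ng(r_-)|\le|D_ng|_{\sup}\le|g|_{n+1}$, the last bound coming from telescoping the differences $D_ng(r)-D_ng(r_-)=b_r^{n,n}(g)$ and $D_ng(0)=b_0^{n,n}(g)$, each of absolute value $\le|g|_{n+1}$ (the $r=0$ terms being immediate). Now let $(f_k)$ be $|\cdot|_{n+1}$-Cauchy in $C^{n+1}(R,K)$. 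By domination it is $|\cdot|_n$-Cauchy, so it converges in $(C^n(R,K),|\cdot|_n)$ to some $f\in C^n(R,K)$; since the functionals $b_r^{n,j}$ are $|\cdot|_n$-continuous, $b_r^{n,j}(f_k)\gamma_r^{-1}\to b_r^{n,j}(f)\gamma_r^{-1}$ for each $(r,j)$, and the Cauchy hypothesis upgrades this to uniform convergence in $(r,j)$. A standard uniform-Cauchy argument then shows $\lim_{r\to a}b_r^{n,j}(f)\gamma_r^{-1}$ exists for every $a$, and passing to the limit in the relations $\lim_{r\to a}\bigl[b_r^{n,j}(f_k)\gamma_r^{-1}-\binom{n+1}{j}b_r^{n,0}(f_k)\gamma_r^{-1}\bigr]=0$ supplied by assertion (3) for each $f_k$ verifies the ratio conditions for $f$; hence $f\in C^{n+1}(R,K)$ by assertion (3). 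The uniform convergence finally reads $|f_k-f|_{n+1}\to 0$, so $(C^{n+1}(R,K),|\cdot|_{n+1})$ is complete.

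The hard part is the interface between the two norms: everything hinges on the single transition identity above and on the bound $|D_ng|_{\sup}\le|g|_{n+1}$, and getting the constants right is where the hypothesis $n\le p-1$ in positive characteristic enters, through the invertibility of the factorials and binomials implicit in $(\ref{Dandder})$ and $(\ref{propertyD_j})$. The remaining points, namely the compactness step for finiteness and the transfer of the limit and ratio conditions of assertion (3) under uniform convergence of the coefficients, are routine once the domination $|\cdot|_n\le|\cdot|_{n+1}$ and the $|\cdot|_n$-continuity of the coefficient functionals are established.
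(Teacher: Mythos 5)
Your proposal only proves assertion (4); assertions (1)--(3) at level $n$ are invoked as available (``in proving (4) at level $n$ I may use assertions (1)--(3) at level $n$'') but never established, even though your own ordering ``(4) at level $n-1$, then (1), (2), (3) at level $n$, then (4) at level $n$'' correctly identifies them as prerequisites that must be proved inside the induction step. This is not a presentational omission: those three assertions are the substantive content of the theorem. For (1) and (2) the paper has to construct the antiderivation $P_n$ of (\ref{defP_n}), prove $P_n\bigl((x-r)^k\chi_r\bigr)=\tfrac{1}{k+1}(x-r)^{k+1}\chi_r$ and the norm bound $|P_nf|_n\leq|(n!)^{-1}f|_{n-1}$ (Proposition \ref{propP_n}), show that $T_n=n!\,P_n\circ\cdots\circ P_1$ is an isometry onto the span of the $(x-r)^n\chi_r$, establish the orthogonality $N_n^n(R,K)\perp\operatorname{Im}T_n$ via $|D_n(\cdot)|_{\sup}\leq|\cdot|_n$, and decompose an arbitrary $f\in C^n(R,K)$ as $f=(f-P_nf')+P_nf'$, using Lemma \ref{half_corofmain} to expand the first summand. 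None of this machinery appears in your proposal, and without it there is no argument that your candidate set is orthonormal or that it spans. Likewise, the ``if'' direction of (3) is the technical core of the paper: it requires the generalization of Lemma \ref{keylem1} to Theorem \ref{keylem2} and Corollary \ref{keycor}, which in turn rest on the identities of Lemmas \ref{lemlem1} and \ref{lemlem2} writing $\psi_nf(t_m,t_1)$ as a non-Archimedean convex combination of the quantities $\psi_nf(t_j,t_{j-1})$ and $\psi_{n-l}D_lf(t_{j-1},t_1)$ along the chain $t_1\lhd t_2\lhd\cdots\lhd t_m$, with weights summing to $1$ via (\ref{lemlem2later}). Your proposal contains no substitute for this step, and it is precisely here (and in the invertibility of the binomial coefficients $\binom{n+1}{j}$) that the hypothesis $n\leq p-1$ does real work, not merely ``implicitly.''

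The part you do prove is essentially correct. Your finiteness argument for (\ref{def of norm}) (local boundedness from the limits in (3) plus compactness of $R$) and your completeness argument (domination $|\cdot|_n\leq|\cdot|_{n+1}$, continuity of the coefficient functionals, uniform convergence of the coefficients, and a Moore--Osgood interchange to transfer the limit and ratio conditions of (3) to the limit function) parallel the paper's proof of (4), which proceeds via Lemma \ref{MOthm} exactly as in Corollary \ref{BanachC^1}; your detour through the transition identity $b_r^{n-1,j}(f)\gamma_r^{-1}=b_r^{n,j}(f)+\binom{n}{j}D_nf(r_-)$ is valid (it is the paper's Corollary \ref{normineq}, used there for Theorem \ref{samenorm}) but is not even needed, since $|f_k-f_l|_n=\sup_{r,j}|b_r^{n,j}(f_k-f_l)|\leq|f_k-f_l|_{n+1}$ follows directly from the orthonormality in (1) that you already invoke. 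But a correct proof of one quarter of the statement, resting on the unproved other three quarters, leaves the theorem unproved.
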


When $K=\mathbb{Q}_p$, $\pi=p$ and $\mathcal{T}=\{0, 1, \cdots, p-1\}$ for a prime $p$, Theorem \ref{main} for $n=0, 1$ coincide with \cite[Exercise 63.A, Theorem 68.1, Corollary 68.2]{Sc84} and \cite[Theorem 6, Theorem 8, Corollary 9]{DS94}.
Note that the assertions $(1)$ and $(2)$ for $n=0$ already proved in \cite[Section 2]{dS16}. In addition, we will prove the following theorem in Section 4.

%安藤加筆ーーーーーーーーーーー
\begin{thm}\label{samenorm}
Let $n\geq1$. If ${\rm char}(K)=p>0$, we also assume that $n\leq p-1$. Then $|f|_{n}=|f|_{C^n}$ holds for all $f\in C^n(R,K)$. (See (\ref{classical norm}) and (\ref{def of norm}) for the definition of $|f|_{n}$ and $|f|_{C^n}$.)
\end{thm}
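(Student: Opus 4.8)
The plan is to prove the two inequalities $|f|_n\le|f|_{C^n}$ and $|f|_{C^n}\le|f|_n$ separately, after recording two expressions for the norm $|f|_n$. By Theorem \ref{main}(4) (applied with $n$ replaced by $n-1$) we have
\[
|f|_n=\sup_{r\in\mathcal{R}}\max_{0\le j\le n-1}|b_r^{n-1,j}(f)\gamma_r^{-1}|,
\]
and by Theorem \ref{main}(2) each weighted coefficient equals $D_jf(0)$ when $r=0$ and equals $\psi_{n-1-j}D_jf(r,r_-)=\Phi_{n-j}D_jf(r,r_-,\dots,r_-)$ when $r\neq0$. On the other hand, since $\{\gamma_r^{n-j}(x-r)^j\chi_r\}$ is an orthonormal basis of $(C^n(R,K),|\cdot|_n)$ by Theorem \ref{main}(1), the orthonormal basis property also gives $|f|_n=\sup_{r,j}|b_r^{n,j}(f)|$. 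The whole argument then rests on two lemmas: a coalescing identity for difference quotients of $D_jf$, and a supremum-norm bound for the basis elements.

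First I would establish that for $f\in C^n(R,K)$ and $j+m\le n$ the difference quotient of $D_jf$ is a finite sum of confluent values of $\Phi_{j+m}f$:
\[
\Phi_m(D_jf)(x_0,\dots,x_m)=\sum_{\substack{k_0,\dots,k_m\ge0\\ k_0+\cdots+k_m=j}}\Phi_{j+m}f(\underbrace{x_0,\dots,x_0}_{k_0+1},\dots,\underbrace{x_m,\dots,x_m}_{k_m+1}).
\]
I would prove this by induction on $m$: the case $m=0$ is $D_jf(x)=\Phi_jf(x,\dots,x)$, and the inductive step follows by telescoping the coalesced difference and repeatedly applying the recursion $(\ref{n-thquot})$ together with the symmetry of $\Phi$ from Remark \ref{rmkPhi}(1); since no factorials enter, no restriction on $\operatorname{char}(K)$ is needed here. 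As this is an equality of continuous functions (each confluent argument lies in $R^{j+m+1}$ and $f\in C^{j+m}$), the ultrametric inequality gives $|\Phi_m(D_jf)|_{\sup}\le|\Phi_{j+m}f|_{\sup}\le|f|_{C^n}$ for $j+m\le n$, hence $|D_jf|_{C^{n-j}}\le|f|_{C^n}$. Granting this, $|f|_n\le|f|_{C^n}$ follows at once: for $r=0$ we have $|D_jf(0)|\le|D_jf|_{\sup}\le|D_jf|_{C^{n-j}}\le|f|_{C^n}$, while for $r\neq0$ we have $|\Phi_{n-j}D_jf(r,r_-,\dots,r_-)|\le|\Phi_{n-j}(D_jf)|_{\sup}\le|D_jf|_{C^{n-j}}\le|f|_{C^n}$, uniformly in $r$ and $0\le j\le n-1$.

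For the reverse inequality I would show that each basis element satisfies $|\gamma_r^{n-j}(x-r)^j\chi_r|_{C^n}\le1$, i.e.\ $|\Phi_m(\gamma_r^{n-j}(x-r)^j\chi_r)|_{\sup}\le1$ for $0\le m\le n$. Writing the difference quotient as $\sum_{x_i\in D_r}(x_i-r)^j/\prod_{i'\neq i}(x_i-x_{i'})$ (terms with $x_i\notin D_r$ vanish), I would distinguish cases by how many arguments lie in $D_r$, using $|\gamma_r|=|\pi|^{l(r)-1}$, the facts that $|x-r|\le|\pi|^{l(r)}$ on $D_r$ while $|x-x'|\ge|\pi|^{l(r)-1}$ when $x\in D_r$ and $x'\notin D_r$, and that a divided difference of $(x-r)^j$ of order $m\le j$ is the complete homogeneous symmetric polynomial of degree $j-m$ in the $(x_i-r)$ (and vanishes for $m>j$); each case then reduces to checking that an explicit power of $|\pi|$ is $\le1$, the Leibniz rule for divided differences of the product $(x-r)^j\cdot\chi_r$ organizing the bookkeeping. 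Once this bound is known, fix $m\le n$ and a tuple $(x_0,\dots,x_m)\in\bigtriangledown^{m+1}R$; the expansion $f=\sum b_r^{n,j}(f)\gamma_r^{n-j}(x-r)^j\chi_r$ converges uniformly (the coefficients tend to $0$ and the basis elements have supremum norm $\le1$), and since $|\cdot|_n$ dominates $|\cdot|_{\sup}$ its uniform limit is $f$, so evaluating $\Phi_m$—a finite $K$-linear combination of point evaluations—termwise gives
\[
|\Phi_mf(x_0,\dots,x_m)|\le\sup_{r,j}|b_r^{n,j}(f)|\,\bigl|\Phi_m(\gamma_r^{n-j}(x-r)^j\chi_r)(x_0,\dots,x_m)\bigr|\le\sup_{r,j}|b_r^{n,j}(f)|=|f|_n.
\]
Taking the supremum over tuples and over $0\le m\le n$ yields $|f|_{C^n}\le|f|_n$, which combined with the previous paragraph gives equality.

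The main obstacle is the supremum-norm bound for the basis elements, which is the analogue of showing that they are normalized for $|\cdot|_{C^n}$ rather than for $|\cdot|_n$: unlike the orthonormality in Theorem \ref{main}(1) it cannot simply be quoted and must be proved by an explicit difference-quotient computation, whose delicate case is when the arguments $x_0,\dots,x_m$ straddle the boundary of the disk $D_r$—precisely where the factor $\gamma_r^{n-j}$ is consumed and the bound becomes sharp (attaining the value $1$). By comparison the coalescing identity, though combinatorially involved, is conceptually routine.
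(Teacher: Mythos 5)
Your proposal is correct in outline but takes a genuinely different route from the paper. The paper proves the theorem by induction on $n$: it writes $|f|_{C^n}=\max\{|f|_{C^{n-1}},|\Phi_nf|_{\sup}\}$, invokes the induction hypothesis for the first term, and identifies $|\Phi_nf|_{\sup}$ with $\sup_{r,j}|b_r^{n-1,j}(f)\gamma_r^{-1}|$ via Theorem \ref{charn-lip} on $n$-th Lipschitz functions; the two inequalities then follow from Corollary \ref{normineq} and Lemma \ref{sup_k}. The hard direction of Theorem \ref{charn-lip} is in turn a global application of Corollary \ref{keycor} (with $a=c=0$, $\delta>1$), i.e.\ of the generalized key lemma machinery of Section 3.3, while the easy direction uses Schikhof's Lemma \ref{lem78.3} — your coalescing identity is a clean multivariable generalization of that lemma, and your inequality $|f|_n\leq|f|_{C^n}$ is essentially the same argument in a non-inductive form. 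Where you genuinely diverge is the inequality $|f|_{C^n}\leq|f|_n$: instead of routing through Lipschitz functions and Corollary \ref{keycor}, you bound $|\Phi_m(\gamma_r^{n-j}(x-r)^j\chi_r)|_{\sup}\leq1$ for each basis element and evaluate $\Phi_m$ termwise on the convergent expansion. This is sound (the termwise evaluation is justified exactly as you say, since $\Phi_m$ at a fixed tuple of distinct points is $|\cdot|_{\sup}$-continuous and $|\cdot|_n$ dominates $|\cdot|_{\sup}$), and it buys a self-contained proof that does not need Section 4.1; what it costs is that the entire difficulty of Corollary \ref{keycor} is now concentrated in the basis-element estimate, which you only sketch. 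Be warned that the naive Leibniz expansion does not give the bound term by term: for example for $(x-r)^n\chi_r$ with $m=1$, the term $\Phi_0((x-r)^n)(x_0)\cdot\Phi_1\chi_r(x_0,x_1)$ has absolute value up to $|\gamma_r|^{-1}>1$ when $x_0\notin D_r$ and $x_1\in D_r$, so you must first use the symmetry of $\Phi_m$ to order the arguments with those in $D_r$ leading (or track the cancellations explicitly, as in the identity $\Phi_2(\gamma_r(x-r)\chi_r)=-\gamma_r(x_2-r)/\bigl((x_0-x_2)(x_1-x_2)\bigr)$ for $x_0,x_1\in D_r$, $x_2\notin D_r$). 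With that care the case analysis closes, so I regard this as a correct alternative proof whose one substantive step needs to be written out in full.
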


To prove this theorem, we introduce the $n$-th Lipschitz functions.

\begin{dfn}\label{defn-lip}
Let $n\geq1$. A function $f\in C(R,K)$ is called an {\it $n$-th Lipschitz function} if 
\begin{align}\label{dfnn-lip}
\sup\{|\Phi_nf(x_1,\cdots,x_{n+1})|\mid(x_1,\cdots,x_{n+1})\in\bigtriangledown^{n+1}R\}<\infty.
\end{align}
(See (\ref{btd}) for the definition of $\bigtriangledown^{n+1}R$.) We call the value of the left hand side of (\ref{dfnn-lip}) the {\it Lipschitz constant of f} and denote it by $A_f$. We denote the subspace of $n$-th Lipschitz functions by $Lip_n(R,K)$.
\end{dfn}

\begin{rmk}
We define $\Delta_n \subset R^n$ to be
\begin{align}\label{defdelta}
\Delta_n \coloneqq \{(x, \cdots, x) \in R^n\}.
\end{align}
Let $n\geq1$ and $f\in C^{n-1}(R,K)$. Then $\Phi_{n}f$ can be extended to $R^{n+1}\backslash\Delta_{n+1}$ 
and
\begin{align*}
\sup\{|\Phi_nf(x)|\mid x\in\bigtriangledown^{n+1}R\}
=\sup\{\left|\Phi_nf(x)\right|\mid x\in R^{n+1}\backslash\Delta_{n+1}\}.
\end{align*}
In addition, if $f\in C^n(R,K)$, $\Phi_nf$ can be extended to $R^{n+1}$ and
\begin{align*}
\sup\{|\Phi_nf(x)|\mid x\in\bigtriangledown^{n+1}R\}
=\sup\{\left|\Phi_nf(x)\right|\mid x\in R^{n+1}\}.
\end{align*}
These follow from the fact that $R^{n+1}\backslash\Delta_{n+1}$ and $\bigtriangledown^{n+1}R$ are dense in $R^{n+1}$. In the following, we denote the common value by $|\Phi_nf|_{{\rm sup}}$.
\end{rmk}
%\Deltaの定義を参照する.

In fact, an $n$-th Lipschitz function is a $C^{n-1}$-function (see Lemma \ref{inclu}), hence $f$ has the representation
%In factのように説明してOK？
%
\begin{align}\label{expC^{n-1}}
f=\sum_{r\in\mathcal{R}}\sum_{j=0}^{n-1}b_{r}^{n-1,j}(f)\gamma_r^{n-1-j}(x-r)^j\chi_r
\end{align}
by Theorem \ref{main}. The following theorem plays an important role in our proof of Theorem \ref{samenorm}. We note that \cite[Corollary 3.2]{dS16} and \cite[Theorem 63.2]{Sc84} follow as special cases from Theorem \ref{charn-lip}. 

\begin{thm}\label{charn-lip}
Let $n\geq1$ and $f\in C(R,K)$. The following conditions are equivalent.
\begin{enumerate}
\item The function $f$ is an $n$-th Lipschitz function.

\item The function $f$ is a $C^{n-1}$ -function and has the expansion $(\ref{expC^{n-1}})$ with 
\begin{align*}
\sup_{\substack{r\in\mathcal{R}_{+}\\0\leq j\leq n-1}}\left\{\left|b_r^{n-1,j}(f)\gamma_r^{-1}\right|\right\}<\infty.
\end{align*}
\end{enumerate}
Moreover, if these conditions hold, then we have 
\begin{align*}
A_f=\sup_{\substack{r\in\mathcal{R}_{+}\\0\leq j\leq n-1}}\left\{\left|b_r^{n-1,j}(f)\gamma_r^{-1}\right|\right\}.
\end{align*}
\end{thm}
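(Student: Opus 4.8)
The plan is to prove the equivalence by relating the Lipschitz condition on $\Phi_n f$ directly to the supremum of the normalized wavelet coefficients $b_r^{n-1,j}(f)\gamma_r^{-1}$. The natural strategy is induction on $n$. For the base case $n=1$, condition (1) says $\sup|\Phi_1 f|<\infty$, and the wavelet coefficients of a $C^0$-function are $b_r(f)=f(r)-f(r_-)$; since $r-r_-=\gamma_r$ and $|r-r_-|=|\pi|^{l(r)-1}$ captures exactly the distance between $r$ and $r_-$, the quotient $b_r(f)\gamma_r^{-1}$ is precisely a difference quotient $\Phi_1 f(r,r_-)$. One then shows that the supremum of $|\Phi_1 f|$ over all of $\bigtriangledown^2 R$ equals the supremum over the special pairs $(r,r_-)$, which is the content of \cite[Theorem 63.2]{Sc84} and gives both the equivalence and the norm identity in the $n=1$ case.

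For the inductive step, suppose the theorem holds for $n-1$ and let $f$ be given. I would first establish, via Lemma \ref{inclu} (asserted earlier), that an $n$-th Lipschitz function is automatically $C^{n-1}$, so the expansion $(\ref{expC^{n-1}})$ is available and its coefficients $b_r^{n-1,j}(f)$ are given by the explicit formula of Theorem \ref{main}(2), namely $\gamma_r\,\psi_{n-1-j}D_jf(r,r_-)$ for $r\neq 0$. The key algebraic observation is that $\Phi_n f(x_1,\dots,x_{n+1})$, being symmetric and satisfying the recursion $(\ref{n-thquot})$, can be rewritten—by iterating the collapse of repeated arguments and using $\psi_n f(x,y)=\Phi_{n+1}f(x,y,\dots,y)$—in terms of the functions $\psi_{n-1-j}D_jf$ evaluated at pairs of points. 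More precisely, I expect to decompose $\Phi_n f$ through a telescoping/Taylor-type identity so that $|\Phi_n f|_{\sup}$ is controlled above and below by the quantities $|\psi_{n-1-j}D_jf(r,r_-)|$, which upon dividing by $\gamma_r$ become exactly $|b_r^{n-1,j}(f)\gamma_r^{-1}|$.

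The two inequalities then have to be matched. For the direction (1)$\Rightarrow$(2), I would bound each $|\psi_{n-1-j}D_jf(r,r_-)|$ by $A_f$ (up to the factor $\gamma_r^{-1}$) using the expression of $\psi_{n-1-j}D_jf$ as an iterated difference quotient built from $\Phi_n f$; this uses the structural identities $(\ref{Dandder})$ and $(\ref{propertyD_j})$ relating $D_j$ to the difference quotients, together with the non-Archimedean ultrametric inequality which makes the sup of coefficients no larger than $A_f$. For (2)$\Rightarrow$(1), I would reconstruct $\Phi_n f$ on arbitrary tuples in $\bigtriangledown^{n+1}R$ from the wavelet data: expand $f$ via $(\ref{expC^{n-1}})$, apply $\Phi_n$ to each basis term $\gamma_r^{n-1-j}(x-r)^j\chi_r$, and estimate. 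Because $\Phi_n$ annihilates polynomials of degree $<n$ and the characteristic functions $\chi_r$ have nested supports, the ultrametric estimate should yield $|\Phi_n f|_{\sup}\le \sup|b_r^{n-1,j}(f)\gamma_r^{-1}|$, with the reverse inequality coming from evaluating at $(r,r_-,\dots,r_-)$.

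The main obstacle I anticipate is the bookkeeping in the combinatorial identity that expresses $\Phi_n f$ in terms of the $\psi_{n-1-j}D_j f(r,r_-)$ with the correct binomial weights $\binom{n}{j}$; getting the two supremum inequalities to coincide exactly (rather than merely up to a constant) requires the non-Archimedean estimates to be sharp, which hinges on the normalization $|\gamma_r|=|r-r_-|$ controlling the scale at each level $l(r)$ and on the assumption $n\le p-1$ ensuring that the relevant binomial coefficients are units so that no cancellation degrades the bound. Verifying that the restriction from all tuples to the distinguished pairs $(r,r_-)$ loses nothing—i.e. that the supremum is genuinely attained along these pairs—is the delicate point, and I would handle it exactly as in the $C^1$ case by a density and covering argument adapted to the tower $\mathcal{R}_0\subset\mathcal{R}_1\subset\cdots$.
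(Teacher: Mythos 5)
Your direction (1)$\Rightarrow$(2) is essentially the paper's: invoke Lemma \ref{inclu} to get $f\in C^{n-1}(R,K)$, identify $b_r^{n-1,j}(f)\gamma_r^{-1}$ with $\psi_{n-1-j}D_jf(r,r_-)$ via Theorem \ref{main}(2), and bound this by $A_f$. One point you leave vague is how to bound the $j\ge 1$ coefficients: evaluating at $(r,r_-,\dots,r_-)$ only treats $j=0$, and the paper needs Lemma \ref{lem78.3} to write $\psi_{n-1-j}D_jf(r,r_-)$ as an integer combination of the values $\Phi_nf(r,\dots,r,r_-,\dots,r_-)$, each of which is bounded by $A_f$ after extending $\Phi_nf$ to $R^{n+1}\setminus\Delta_{n+1}$.

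For (2)$\Rightarrow$(1) you propose a genuinely different route, and this is where the gap lies. The paper simply applies Corollary \ref{keycor} (the tree-decomposition machinery of Lemmas \ref{lemlem1}, \ref{lemlem2} and Theorems \ref{keylem2}, \ref{keylem3}, already built for Theorem \ref{main}(3)) with $a=c=0$, $\delta>1$ and $\varepsilon$ equal to the coefficient supremum, which yields $A_f\le\sup_{r,j}|b_r^{n-1,j}(f)\gamma_r^{-1}|$ in one stroke. Your substitute---applying $\Phi_n$ term by term to the wavelet expansion---rests entirely on the unstated estimate
\begin{align*}
\left|\Phi_n\left(\gamma_r^{n-1-j}(x-r)^j\chi_r\right)\right|_{\sup}\le|\gamma_r|^{-1},\qquad 0\le j\le n-1,
\end{align*}
and this is not a formality. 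The function $(x-r)^j\chi_r$ is only \emph{locally} polynomial, so the fact that $\Phi_n$ annihilates polynomials of degree less than $n$ disposes only of configurations with all $n+1$ points inside $D_r$; for mixed configurations one must estimate a divided difference of the rational function $(x-r)^j/\prod_k(x-x_k)$ over the points lying in $D_r$, and the naive ultrametric or Leibniz-type bound gives only $q^{n(l(r)-1)}$ rather than the required $q^{(n-j)(l(r)-1)}$. The sharp bound does hold, but it needs its own lemma (an estimate for divided differences of functions analytic on a disk), which your proposal neither states nor proves; without it the inequality $|\Phi_nf|_{\sup}\le\sup|b_r^{n-1,j}(f)\gamma_r^{-1}|$ is not established. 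Separately, the induction on $n$ you announce is never actually used in your inductive step, and the $C^{n-1}$-convergence of the expansion must be converted into pointwise convergence of the $\Phi_n$-images before the termwise estimate can be summed (this part is routine since $\Phi_n$ at a fixed point of $\bigtriangledown^{n+1}R$ is a finite linear combination of evaluations).
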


%安藤終ーーーーーーーーーー

%In fact, the norm $|\cdot|_n$ of $C^n(R, K)$ coincides with the norm $|\cdot|_{C^n}$ (see Section 4 for detail). Hence, this theorem contains Theorem \ref{charC^1}, Theorem \ref{baseforC^1}, Theorem \ref{charC^2} and Theorem \ref{baseforC^2} when $K=\mathbb{Q}_p$, $\pi=p$ and $\mathcal{T}=\{0, 1, \cdots, p-1\}$.

This paper is organized as follows. In Section 2, we will give a proof of Theorem \ref{main} for $n=0$ and introduce $N^1$-functions.
In Section 3, we will prove Theorem \ref{main} by induction on $n$. In Section 4, we will show Theorem \ref{samenorm} and Theorem \ref{charn-lip}. In Section 5, we will introduce monotone, pseudocontraction and isometry functions and give characterizations of those functions by the coefficients with respect to the wavelet basis.

\vspace{10pt}
\noindent
\textsc{Notation:}~ For a field $F$, we denote the characteristic of $F$ by $\operatorname{char}(F)$. Let $K$ be a local field, equipped with the non-Archimedean norm $|\cdot|$ normalized so that $|\pi|=q^{-1}$ for a fixed uniformizer $\pi$ of $K$, and $R$ be the ring of integers of $K$ whose residue field $\kappa$ is finite of cardinality $q$ and $\operatorname{char}(\kappa)=p>0$. We fix a set of representatives $\mathcal{T}$, containing $0 \in R$, of $\kappa$ in $R$. %For $r \in \mathcal{R}$, we call the expansion $r=\sum_{i=0}^ma_ip^i$ with $a_i \in \mathcal{T}$ and $a_m \neq 0$ ``the $\pi$-adic expansion of $r \in \mathcal{R}$''. 

\vspace{10pt}
\noindent
\textsc{Acknowledgment:}~ The authors are grateful to our supervisor Professor Takao Yamazaki for his advice and helpful comments. The second-named author is supported by the WISE Program for AI Electronics, Tohoku University.

\section{$C^1$-functions and $N^1$-functions}

\subsection{A preliminary lemma}

Let $x \in R$ and $r \in \mathcal{R}$. We write 
\begin{align}\label{deflhd}
r \lhd x
\end{align}
if $|x-r| \leq q^{-l(r)}$. (See (\ref{defl(r)}) for the definition of $l(r)$.) For example, if $x$ has the $\pi$-adic expansion $x=\sum_{i=0}^\infty a_i\pi^i$, we see that $\sum_{i=0}^{m-1}a_i\pi^i \lhd x$ for any $m \geq 1$. In particular, note that $r_- \lhd r$ for $r \in \mathcal{R}_+$. (See (\ref{defr_-}) for the definition of $r_-$.) To prove the assertion (3) of Theorem \ref{main} for $n=0$, we first show the following key lemma, which is a generalization of \cite[Lemma 63.3]{Sc84}.

\begin{lem}\label{keylem1}
Let $f \in C(R, K)$, let $B$ and $S$ be balls in $R$ and $K$ respectively. If $\Phi_1f(r, r_-) \in S$ for any $r \in \mathcal{R}_+$ with $r, r_- \in B$, then we have $\Phi_1f(x, y) \in S$ for any distinct $x, y \in B$.
\end{lem}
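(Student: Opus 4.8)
The plan is to reduce the assertion for arbitrary distinct $x,y\in B$ to the ``vertical'' case in which one argument is a $\pi$-adic truncation of the other, and to settle the vertical case by telescoping along the chain of successive parents $r\mapsto r_-$. Throughout I use that a ball $S\subset K$ absorbs balanced $R$-combinations: if $u,v\in S$ and $\alpha,\beta\in R$ satisfy $\alpha-\beta=1$, then $\alpha u-\beta v=u+\beta(u-v)\in S$, and more generally any average $\sum_i w_iu_i$ with $u_i\in S$, $w_i\in R$ and $\sum_i w_i=1$ lies in $S$ (expand $\sum_i w_iu_i-s_0=\sum_i w_i(u_i-s_0)$ around the center $s_0$ of $S$). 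This is the only place the ultrametric nature of $S$ enters.

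First, the reduction. Given distinct $x,y\in B$, let $c\in\mathcal R$ be their longest common $\pi$-adic truncation, so that $c\lhd x$, $c\lhd y$ and $\max\{|x-c|,|y-c|\}=|x-y|$. If $c\in\{x,y\}$, then $\Phi_1f(x,y)$ is already a vertical quotient. Otherwise, writing $f(x)-f(y)=(f(x)-f(c))-(f(y)-f(c))$ yields
\begin{align*}
\Phi_1f(x,y)=\frac{x-c}{x-y}\,\Phi_1f(x,c)-\frac{y-c}{x-y}\,\Phi_1f(y,c),
\end{align*}
where the two coefficients lie in $R$ by the above norm identities and differ by $1$. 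By the absorbing property it therefore suffices to prove that $\Phi_1f(z,c)\in S$ whenever $c\in\mathcal R$, $c\lhd z$, $z\neq c$ and $z,c\in B$.

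For the vertical case I first take $z\in\mathcal R$. Since $c\lhd z$ exhibits $c$ as the low-order truncation of $z$ while the parent map $r\mapsto r_-$ removes the highest nonzero digit, the chain $z=s_0$, $s_{i+1}=(s_i)_-$, reaches $c$ after finitely many steps, say $s_L=c$; all intermediate $s_i$ are truncations of $z$ lying between $c$ and $z$, hence in $B$. Telescoping,
\begin{align*}
f(z)-f(c)=\sum_{i=0}^{L-1}(s_i-s_{i+1})\,\Phi_1f(s_i,(s_i)_-),\qquad z-c=\sum_{i=0}^{L-1}(s_i-s_{i+1}),
\end{align*}
so $\Phi_1f(z,c)=\sum_i w_i\,\Phi_1f(s_i,(s_i)_-)$ with $w_i=(s_i-s_{i+1})/(z-c)$. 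Each edge $s_i-s_{i+1}$ is a single digit at a position at least that governing $|z-c|$, whence $|s_i-s_{i+1}|\le|z-c|$, i.e. $w_i\in R$, while $\sum_i w_i=1$; and each $\Phi_1f(s_i,(s_i)_-)\in S$ by hypothesis. The absorbing property gives $\Phi_1f(z,c)\in S$. For general $z\in R$ I approximate $z$ by its truncations $z_N\in\mathcal R$: for large $N$ we have $z_N\neq c$, $z_N\in B$, and $c$ is still a truncation of $z_N$, so $\Phi_1f(z_N,c)\in S$ by the case just proved; since $f$ is continuous and $z\neq c$ we have $\Phi_1f(z_N,c)\to\Phi_1f(z,c)$, and $S$ is closed, so $\Phi_1f(z,c)\in S$.

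I expect the crux to be the combinatorial estimate $|s_i-s_{i+1}|\le|z-c|$ on the edges of the ancestor chain: it is exactly this integrality of the weights $w_i$ that lets the ultrametric ball $S$ absorb the telescoped average, and it requires careful tracking of the digit positions removed by the iterated parent map (in particular, that the lowest such position determines $|z-c|$). Once this is in place, the reduction via the longest common truncation and the density/continuity passage from $\mathcal R$ to $R$ are routine.
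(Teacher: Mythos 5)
Your proposal is correct and follows essentially the same route as the paper's proof: the same convexity property of ultrametric balls, the same splitting of $\Phi_1f(x,y)$ at the longest common $\pi$-adic truncation, the same telescoping along the parent chain $r\mapsto r_-$, and the same density-plus-continuity passage from $\mathcal{R}$ to $R$ (which the paper performs at the outset for both variables rather than at the end for one).
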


\begin{rmk}\label{rmkconvex}
Let $c \in K$ and $x_1, \cdots x_n \in S=\{x \in K \mid |x-c| < \varepsilon \}$. Then, for $\lambda_1, \cdots, \lambda_n \in K$ satisfying $|\lambda_i| \leq 1$ for each $1 \leq i \leq n$ and $\sum_{i=1}^n\lambda_i=1$, we have $\sum_{i=1}^n\lambda_ix_i \in S$. Indeed, we find that
\begin{align*}
\left|\sum_{i=1}^n\lambda_ix_i-c\right|&=\left|\sum_{i=1}^n\lambda_ix_i-\left(\sum_{i=1}^n\lambda_i\right)c\right| \\
&=\left|\sum_{i=1}^n\lambda_i(x_i-c)\right| \\
&\leq \max_{1 \leq i \leq n}\{|\lambda_i||x_i-c|\} < \varepsilon.
\end{align*}
\end{rmk}

\begin{proof}[Proof of Lemma \ref{keylem1}]
We may assume that $x, y \in B \cap \mathcal{R}$. Indeed, if we suppose that the assertion holds for all pairs of distinct elements in $B \cap \mathcal{R}$, by taking sequences $r_x, r_y \in \mathcal{R}$ with $r_x \to x$ and $r_y \to y$, we see that $\Phi_1f(x, y)=\lim_{(r_x, r_y) \to (x, y)} \Phi_1f(r_x, r_y) \in S$. Note that $\Phi_1f : R^2\setminus \Delta_2 \rightarrow K$ is continuous ($\Delta_2$ is defined in (\ref{defdelta})) and $S$ is closed in $K$.

Let $B=\{x \in R \mid |x-a| < \delta \}$, $S=\{x \in K \mid |x-c| < \varepsilon \}$ and $z$ be the common initial part in the $\pi$-adic expansions of $x$ and $y$, i.e.
\begin{align}\label{defz}
z=
\begin{cases}
\sum_{i=0}^{n-1} a_i \pi^i & \text{if} \ |x-y|=q^{-n}<1, x=\sum_{i=0}^\infty  a_i \pi^i \\
0 & \text{if} \ |x-y|=1.
\end{cases}
\end{align}
By the definition of $z$, we see that $z \lhd x, z \lhd y$ and $|x-y|=\max\{|z-x|, |z-y|\}$. Since 
\begin{align*}
|x-y| \leq \max\{|x-a|, |a-y|\} <\delta,
\end{align*}
we obtain $|x-z|<\delta, |y-z|<\delta$ and
\begin{align*}
|z-a|\leq \max\{|z-x|, |x-a|\}<\delta,
\end{align*}
that is, $z \in B$. Since
\begin{align}\label{lemlem1.1}
\Phi_1f(x, y)=\frac{x-z}{x-y}\Phi_1f(x, z)+\frac{z-y}{x-y}\Phi_1f(z, y),
\end{align}
$|(x-z)/(x-y)|\leq 1, |(z-y)/(x-y)|\leq 1$ and
\begin{align*}
\frac{x-z}{x-y}+\frac{z-y}{x-y}=1,
\end{align*}
according to Remark \ref{rmkconvex}, it suffices to show that $\Phi_1f(x, z) \in S$ and $\Phi_1f(z, y) \in S$. Thus, we may assume that $y \lhd x$ by replacing $z$ with $y$. Then there exists a unique sequence $t_1=y \lhd t_2 \lhd \cdots \lhd t_n=x$ in $\mathcal{R}$ such that $(t_j)_-=t_{j-1}$ for each $2 \leq j \leq n$ and $t_j \in B$ for each $1 \leq j \leq n$. By putting
\begin{align*}
\lambda_j=\frac{t_j-t_{j-1}}{x-y}
\end{align*}
for $2 \leq j \leq n$, we obtain
\begin{align}\label{lemlem2.1}
\Phi_1f(x, y)=\sum_{j=2}^n \lambda_j\Phi_1f(t_j, t_{j-1}),
\end{align}
$\sum_{j=2}^n \lambda_j=1$ and $|\lambda_j| \leq 1$ for each $2 \leq j \leq n$. Since $\Phi_1f(t_j, t_{j-1}) \in S$ for any $2 \leq j\leq n$ by the assumption, Remark \ref{rmkconvex} implies the assertion.
\end{proof}

\subsection{Characterizations of $C^1$-functions and $N^1$-functions}

%何書こうか悩み中

\begin{thm}\label{mainforn=0}
Let $f \in C(R, K)$ be with the expansion $f=\sum_{r \in \mathcal{R}}b_r(f)\chi_r$. Then, $f \in C^1(R, K)$ if and only if the limit $\lim_{\substack{r \to a \\ a \neq r \in \mathcal{R}_+}} b_r(f)\gamma_r^{-1}$ exists for each $a \in R$.
\end{thm}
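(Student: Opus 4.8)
The plan is to translate everything into statements about the first difference quotient $\Phi_1 f$ and then invoke the limit criterion of Remark \ref{rmkPhi}(2), which says that $f \in C^1(R,K)$ if and only if the limit of $\Phi_1 f(x,y)$ over distinct pairs $(x,y) \to (a,a)$ (i.e.\ over $\bigtriangledown^2 R$) exists for every $a \in R$. The bridge between the two formulations is the elementary identity $b_r(f)\gamma_r^{-1} = (f(r)-f(r_-))/(r-r_-) = \Phi_1 f(r, r_-)$, valid for all $r \in \mathcal{R}_+$; thus the hypothesis of the theorem is exactly that $\lim_{r \to a} \Phi_1 f(r, r_-)$ exists along $a \neq r \in \mathcal{R}_+$, for each $a$. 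I would prove the two implications separately.

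For the forward implication I would first record the geometric fact that $r \to a$ with $a \neq r \in \mathcal{R}_+$ forces $l(r) \to \infty$: otherwise some subsequence stays in a fixed $\mathcal{R}_M$, which is a finite set and hence cannot accumulate at $a$ while avoiding $a$. Consequently $|\gamma_r| = q^{-(l(r)-1)} \to 0$, so $|r_- - a| \le \max\{|\gamma_r|, |r-a|\} \to 0$, i.e.\ $r_- \to a$ as well. Hence $(r, r_-) \to (a,a)$ within $\bigtriangledown^2 R$, and if $f \in C^1(R,K)$ the limit furnished by Remark \ref{rmkPhi}(2) specializes to show that $\lim_{r \to a} \Phi_1 f(r, r_-) = \lim_{r\to a} b_r(f)\gamma_r^{-1}$ exists.

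The reverse implication is the substantial part, and here Lemma \ref{keylem1} does the work. Fix $a \in R$, write $L(a) := \lim_{a \neq r \in \mathcal{R}_+,\, r \to a} \Phi_1 f(r, r_-)$, fix $\varepsilon > 0$, and set $S := \{w \in K : |w - L(a)| \le \varepsilon\}$. By definition of $L(a)$ there is $\rho_0 > 0$ with $\Phi_1 f(r, r_-) \in S$ for every $r \in \mathcal{R}_+$ satisfying $0 < |r - a| \le \rho_0$. Choose $\rho \le \rho_0$, and additionally $\rho < |\gamma_a|$ in case $a \in \mathcal{R}_+$, and put $B := \{x \in R : |x - a| \le \rho\}$. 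The point of the extra constraint is that when $a \in \mathcal{R}_+$ it guarantees $a_- \notin B$, so the single pair the limit does not control, namely $(a, a_-)$, is excluded from the hypothesis of the lemma; thus every $r \in \mathcal{R}_+$ with $r, r_- \in B$ genuinely satisfies $r \neq a$ and hence $\Phi_1 f(r, r_-) \in S$. Lemma \ref{keylem1} then yields $\Phi_1 f(x,y) \in S$ for all distinct $x, y \in B$. As $\varepsilon$ was arbitrary, this shows $\lim_{(x,y)\to(a,a)} \Phi_1 f(x,y) = L(a)$ exists for each $a$, and Remark \ref{rmkPhi}(2) gives $f \in C^1(R,K)$.

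I expect the main obstacle to be the bookkeeping in the reverse direction, namely choosing the ball $B$ so that the hypothesis of Lemma \ref{keylem1} is met for \emph{every} admissible $r$. The only genuinely delicate case is $a \in \mathcal{R}_+$, where the pair $(a, a_-)$ is invisible to the defining limit of $L(a)$; the device of shrinking $B$ below the radius $|\gamma_a|$ removes this pair from consideration and is precisely what makes the lemma applicable. Everything else---the identity $b_r(f)\gamma_r^{-1} = \Phi_1 f(r, r_-)$ and the convergence $l(r) \to \infty$---is routine.
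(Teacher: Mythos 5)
Your proposal is correct and follows essentially the same route as the paper: the identity $b_r(f)\gamma_r^{-1}=\Phi_1f(r,r_-)$, the observation that $r_-\to a$ whenever $a\neq r\in\mathcal{R}_+$ tends to $a$ (the paper does this by an explicit case analysis on $a\in\mathcal{R}$ versus $a\notin\mathcal{R}$, you via $l(r)\to\infty$, but it is the same fact), and Lemma \ref{keylem1} for the converse, including the same device of shrinking the ball below $|\gamma_a|$ when $a\in\mathcal{R}_+$ to exclude the uncontrolled pair $(a,a_-)$ (the paper's $\delta_0=\min\{\delta,q^{-l(a)+1}\}$). No gaps.
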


\begin{proof}
Suppose that $f$ is a $C^1$-function. Since $\Phi_1f$ is continuous on $R^2$, the limit
\begin{align*}
\lim_{(x, y) \to (a, a)} \Phi_1f(x, y)=D_1f(a) \in K
\end{align*}
exists for any $a \in R$. In other words, for a given $\varepsilon>0$, there exists $\delta>0$ such that $|\Phi_1f(x, y)-D_1f(a)|<\varepsilon$ for any $x, y \in R$ with $|x-a|<\delta$ and $|y-a|<\delta$.

If $a=\sum_{i=0}^\infty a_i\pi^i \notin \mathcal{R}$, we have $a_l \neq 0$ and $q^{-l}<\delta$ for some $l \in \mathbb{Z}_{>0}$. For any $r \in \mathcal{R}_+$ with $|r-a|<q^{-l-1}$, there is $m \geq l$ such that $r=\sum_{i=0}^m a_i\pi^i$ and $a_m \neq 0$. Since $|r-r_-|=|a_mq^m|=q^{-m}\leq q^{-l}$ (see (\ref{defr_-}) for the definition of $r_-$), we obtain
\begin{align*}
|r_--a|\leq \max\{|r_--r|, |r-a|\} \leq q^{-l} <\delta.
\end{align*}
Thus, if $0<|r-a|<q^{-l}$, it follows that $|r_--a|<\delta$.

If $a=\sum_{i=0}^{l(a)-1}a_i\pi^i \in \mathcal{R}$, set $l \coloneqq \min \{i \geq l(a) \mid q^{-i}<\delta\}$, where $l(a)$ was defined in (\ref{defl(r)}). For any $r \in \mathcal{R}_+$ with $0<|r-a|\leq q^{-l}$, since there is $m \geq l$ such that $r-a=\sum_{i=l}^m a_i\pi^i$ and $a_m \neq 0$, we obtain
\begin{align*}
|r_--a|=|r-a_mq^m-a|\leq \max\{q^{-m}, q^{-l}\} \leq q^{-l} <\delta.
\end{align*}
Thus, if $0<|r-a|<q^{-l+1}$, it follows that $|r_--a|<\delta$.

We conclude that, in both cases, there exists $\delta_0>0$ such that 
\begin{align}\label{conclusion}
|\Phi_1f(r, r_-)-D_1f(a)|<\varepsilon
\end{align}
for any $r \in \mathcal{R}_+$ with $0<|r-a|<\delta_0$. Hence, since we have
\begin{align}\label{fracb_rgamma}
b_r(f)\gamma_r^{-1}=\frac{f(r)-f(r_-)}{r-r_-}=\Phi_1f(r, r_-)
\end{align}
for any $r \in \mathcal{R}_+$, the limit $\lim_{\substack{r \to a \\ a \neq r \in \mathcal{R}_+}} b_r(f)\gamma_r^{-1}=D_1f(a)$ exists.

Conversely, we suppose that the limit $\lim_{\substack{r \to a \\ a \neq r \in \mathcal{R}_+}} b_r(f)\gamma_r^{-1} \eqqcolon g(a)$ exists for each $a \in R$. This means that for a given $\varepsilon>0$, there exists $\delta>0$ such that $|\Phi_1f(r, r_-)-g(a)|<\varepsilon$ for any $r \in \mathcal{R}_+$ with $0<|r-a|<\delta$. If $a \notin \mathcal{R}$, Lemma \ref{keylem1} implies that 
\begin{align}\label{conclusion2}
|\Phi_1f(x, y)-g(a)|<\varepsilon
\end{align}
for any $(x, y) \in \bigtriangledown^2R$ with $|x-a|<\delta$ and $|y-a|<\delta$. If $a \in \mathcal{R}$, put $\delta_0 \coloneqq \min\{\delta, q^{-l(a)+1}\}$. If $r \in \mathcal{R}_+$ satisfies $|r-a|<\delta_0$ and $|r_--a|<\delta_0$, we find that $r \neq a$ and $|r-a|<\delta_0 \leq \delta$. Hence, Lemma \ref{keylem1} implies that 
\begin{align}\label{conclusion3}
|\Phi_1f(x, y)-g(a)|<\varepsilon
\end{align}
for any $(x, y) \in \bigtriangledown^2R$ with $|x-a|<\delta_0$ and $|y-a|<\delta_0$. (See (\ref{btd}) for the definition of $\bigtriangledown^2R$.) In either case, we have
\begin{align*}
\lim_{\substack{(x, y) \to (a, a) \\ (x, y) \in \bigtriangledown^2R}} \Phi_1f(x, y)=g(a).
\end{align*}
It follows that $f \in C^1(R, K)$ from \cite[Theorem 29.9]{Sc84}.
\end{proof}

If $f \in C^1(R, K)$ satisfies $f'=0$, $f$ is called an $N^1$-function. We denote the set of all $N^1$-functions by $N^1(R, K)$. Lemma \ref{keylem1} also implies the following theorem.

\begin{thm}\label{gN^1}
Let $f=\sum_{r \in \mathcal{R}}b_r(f)\chi_r \in C(R, K)$. Then, $f \in N^1(R, K)$ if and only if $\lim_{r \in \mathcal{R}_+}b_r(f)\gamma_r^{-1}=0$. Here, for a sequence $\{x_r\}_{r \in \mathcal{R}_+}$ in $K$, we say 
\begin{align}\label{lim_r}
\lim_{r \in \mathcal{R}_+}x_r=x
\end{align}
if for any $\varepsilon>0$ there exists a finite subset $S_{\varepsilon} \subset \mathcal{R}_+$ such that $|x_r-x|<\varepsilon$ for any $r \in \mathcal{R}_+ \setminus S_{\varepsilon}$.
\end{thm}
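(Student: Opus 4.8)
The plan is to prove Theorem \ref{gN^1} by leveraging the characterization already established, together with Lemma \ref{keylem1}, and to observe that $N^1$ is precisely the ``uniform'' analogue of the pointwise condition in Theorem \ref{mainforn=0}. Recall from (\ref{fracb_rgamma}) that $b_r(f)\gamma_r^{-1}=\Phi_1f(r,r_-)$ for $r\in\mathcal{R}_+$, so the statement $\lim_{r\in\mathcal{R}_+}b_r(f)\gamma_r^{-1}=0$ means that for every $\varepsilon>0$ all but finitely many $r\in\mathcal{R}_+$ satisfy $|\Phi_1f(r,r_-)|<\varepsilon$. On the analytic side, $f\in N^1(R,K)$ means $f\in C^1(R,K)$ with $f'=D_1f\equiv 0$, i.e. $\Phi_1f$ extends continuously to $R^2$ and vanishes on the diagonal $\Delta_2$.

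For the forward direction, suppose $f\in N^1(R,K)$. Since $\Phi_1f$ is continuous on the compact set $R^2$ and vanishes on $\Delta_2$, I would first argue it is uniformly small near the diagonal: for each $\varepsilon>0$ there is a single $\delta>0$ (by uniform continuity together with $\Phi_1f|_{\Delta_2}=0$) such that $|\Phi_1f(x,y)|<\varepsilon$ whenever $|x-y|<\delta$. Then for all $r\in\mathcal{R}_+$ with $l(r)$ large enough that $q^{-l(r)}<\delta$ we have $|r-r_-|=q^{-l(r)}<\delta$, hence $|\Phi_1f(r,r_-)|<\varepsilon$. Only finitely many $r\in\mathcal{R}_+$ have $l(r)$ below this threshold, so the set of exceptional $r$ is finite and $\lim_{r\in\mathcal{R}_+}b_r(f)\gamma_r^{-1}=0$ follows from the definition (\ref{lim_r}).

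For the converse, suppose $\lim_{r\in\mathcal{R}_+}b_r(f)\gamma_r^{-1}=0$, i.e. $\lim_{r\in\mathcal{R}_+}\Phi_1f(r,r_-)=0$. I would first note that for each fixed $a\in R$ the hypothesis forces $\lim_{\substack{r\to a\\ a\neq r\in\mathcal{R}_+}}b_r(f)\gamma_r^{-1}=0$, so Theorem \ref{mainforn=0} already gives $f\in C^1(R,K)$ with $D_1f(a)=0$ for every $a$, i.e. $f'=0$ and $f\in N^1(R,K)$. (If one prefers a self-contained argument, one applies Lemma \ref{keylem1} with $S=\{x\in K\mid |x|<\varepsilon\}$ and a ball $B$ of sufficiently small radius to upgrade the smallness of $\Phi_1f(r,r_-)$ to smallness of $\Phi_1f(x,y)$ for all distinct $x,y\in B$, which yields both the existence of the continuous extension and its vanishing on $\Delta_2$.)

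The main obstacle I anticipate is the forward direction's passage from pointwise vanishing on the diagonal to uniform smallness in a neighborhood of $\Delta_2$, i.e. ensuring the single $\delta$ can be chosen independent of the base point; this is exactly where compactness of $R$ and uniform continuity of the extended $\Phi_1f$ are essential. Everything else is a matter of matching the two formulations of ``$\lim$'' — the neighborhood-filter limit of (\ref{lim_r}) versus the continuity of $\Phi_1f$ — and of recognizing that $N^1$ is the natural limiting case of Theorem \ref{mainforn=0} in which the pointwise limits $g(a)$ are all forced to be $0$ simultaneously.
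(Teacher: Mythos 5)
Your proof is correct and follows essentially the same route as the paper: the forward direction uses compactness of $\Delta_2$ (equivalently, uniform continuity of the extended $\Phi_1f$) to obtain a single $\delta$ working near the whole diagonal, and the converse is the Lemma \ref{keylem1} argument, whether run directly on small balls $B$ and $S=\{x\in K\mid |x|<\varepsilon\}$ as the paper does, or packaged through Theorem \ref{mainforn=0}, whose proof is that very argument. (Only a cosmetic slip: $|r-r_-|=q^{-l(r)+1}$, not $q^{-l(r)}$, which changes nothing since either way only finitely many $r$ are exceptional.)
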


\begin{proof}
Suppose that $f \in N^1(R, K)$. Thus, there exists a continuous function $\Phi_1f : R^2 \rightarrow K$ satisfying $\Phi_1f(x, x)=0$ for any $x \in R$. Then we see that for any $\varepsilon>0$ there exists $\delta>0$ such that $|\Phi_1f(x, y)|<\varepsilon$ for all $x, y \in R$ with $|x-y|<\delta$. Indeed, since $\Phi_1f(a, a)=0$ for $a \in R$, there is $\delta_a >0$ such that $|\Phi_1f(x, y)|<\varepsilon$ for any $x, y \in R$ with $|x-a|<\delta_a$ and $|y-a|<\delta_a$. Since $\{U_a(\delta_a)\}_{a \in R}$ is an open covering of $\Delta_2$ (see (\ref{defdelta}) for the definition of $\Delta_2$.), where
\begin{align}\label{U_a}
U_a(\delta_a) = \{(x, y) \in R^2 \mid |x-a|<\delta_a, |y-a|<\delta_a\},
\end{align}
and $\Delta_2$ is compact, there exist $a_1, \dots, a_r \in R$ such that $\Delta_2 \subset \cup_{1 \leq j \leq r}U_{a_j}(\delta_{a_j})$. Then we find that $|\Phi_1f(x, y)|<\varepsilon$ if $|x-y|<\delta \coloneqq \min_{1 \leq j \leq r}\{\delta_{a_j}\}$. Put
\begin{align*}
S_\varepsilon \coloneqq \{r \in \mathcal{R}_+ \mid l(r) \leq 1-\log_q \delta \}.
\end{align*}
Since
\begin{align*}
|r-r_-|=q^{-l(r)+1} <\delta
\end{align*}
for any $r \in \mathcal{R}_+\setminus S_\varepsilon$, we obtain $\lim_{r \in \mathcal{R}_+}b_r(f)\gamma_r^{-1}=\lim_{r \in \mathcal{R}_+}\Phi_1f(r, r_-)=0$ by (\ref{fracb_rgamma}).

Suppose that for any $\varepsilon>0$ there exists a finite subset $S_{\varepsilon} \subset \mathcal{R}_+$ such that $|\Phi_1f(r, r_-)|=|b_r(f)\gamma_r^{-1}|<\varepsilon$ for any $r \in \mathcal{R}_+ \setminus S_{\varepsilon}$. (Here, we used (\ref{fracb_rgamma}).) We will show that $\lim_{(x, y) \to (a, a)} \Phi_1f(x, y)=0$ for all $a \in R$. If $a \notin S_\varepsilon$, set $\delta \coloneqq \min \{|r-a| \mid r \in S_\varepsilon \}$. If $r \in \mathcal{R}_+$ satisfies $|r-a|<\delta$, then $r \notin S_\varepsilon$ and $|\Phi_1f(r, r_-)|<\varepsilon$. By putting $B=\{x \in R \mid |x-a| < \delta\}$ and $S=\{x \in K \mid |x| < \varepsilon\}$, Lemma \ref{keylem1} implies that $|\Phi_1f(x, y)|<\varepsilon$ for all distinct $x, y \in S$. If $a \in S_\varepsilon$, set $\delta \coloneqq \min \{|r-a| \mid r \in S_\varepsilon\setminus \{a\} \ \text{or} \ r=a_- \}$. If $r \in \mathcal{R}_+$ satisfies $|r-a|<\delta$, then $r \notin S_\varepsilon$ or $r=a$. Applying Lemma \ref{keylem1} to the same balls $B$ and $S$ as the other case, we obtain the conclusion.
\end{proof}

\subsection{$C^1(R, K)$ is a $K$-Banach space}

To prove Theorem \ref{main} (4) for $n=0$, we use the following lemma, which is called Moore-Osgood's theorem. The proof of Lemma \ref{MOthm} is given by an elementary topology and hence omitted.

\begin{lem}\label{MOthm}
Let $(X, d_X), (Y, d_Y)$ be metric spaces and suppose that $(Y, d_Y)$ is complete. Let $S \subset X$ and $c \in X$ be a limit point of $S$. Assume that a sequence $\{f_n : S \rightarrow Y\}_{n \geq 1}$ is uniformly convergent on $S$ and the limit $\lim_{x \to c}f_n(x)$ exists for each $n \geq 1$. Then the limits $\lim_{x \to c}\lim_{n \to \infty}f_n(x)$ and $\lim_{n \to \infty}\lim_{x \to c}f_n(x)$ exist and satisfy
\begin{align*}
\lim_{x \to c}\lim_{n \to \infty}f_n(x)=\lim_{n \to \infty}\lim_{x \to c}f_n(x).
\end{align*}
\end{lem}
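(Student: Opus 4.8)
The plan is to interchange the two limits through a single common value $L$, obtained by first showing that the inner limits $L_n \coloneqq \lim_{x \to c} f_n(x)$ form a Cauchy sequence in $Y$. Write $f$ for the uniform limit $\lim_{n \to \infty} f_n$ on $S$, which exists pointwise so that $f : S \to Y$ is well defined. First I would fix $\varepsilon > 0$ and use uniform convergence to choose $N$ with $d_Y(f_m(x), f_n(x)) < \varepsilon$ for all $m, n \geq N$ and all $x \in S$. Holding $m, n$ fixed and letting $x \to c$ (both inner limits exist by hypothesis), this inequality passes to the limit to give $d_Y(L_m, L_n) \leq \varepsilon$ for all $m, n \geq N$. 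Hence $\{L_n\}_{n \geq 1}$ is Cauchy, and since $(Y, d_Y)$ is complete it converges to some $L \in Y$; this already identifies the right-hand iterated limit $\lim_{n \to \infty} \lim_{x \to c} f_n(x) = L$.

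The second step is the standard $\varepsilon/3$ estimate showing $\lim_{x \to c} f(x) = L$, which identifies the left-hand iterated limit. Given $\varepsilon > 0$, I would first pick $N_1$ so that $d_Y(f_n(x), f(x)) \leq \varepsilon/3$ for all $n \geq N_1$ and all $x \in S$ (uniform convergence), and $N_2$ so that $d_Y(L_n, L) \leq \varepsilon/3$ for all $n \geq N_2$ (convergence of $L_n$ to $L$). Fixing a single index $n \geq \max\{N_1, N_2\}$, the existence of $\lim_{x \to c} f_n(x) = L_n$ furnishes a $\delta > 0$ such that $d_Y(f_n(x), L_n) < \varepsilon/3$ whenever $x \in S$ and $0 < d_X(x, c) < \delta$. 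The triangle inequality
\begin{align*}
d_Y(f(x), L) \leq d_Y(f(x), f_n(x)) + d_Y(f_n(x), L_n) + d_Y(L_n, L) < \varepsilon
\end{align*}
then holds for all such $x$, proving $\lim_{x \to c} \lim_{n \to \infty} f_n(x) = \lim_{x \to c} f(x) = L$. Combining the two steps gives the asserted equality of iterated limits.

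The conceptual heart of the argument, and the only place where the hypotheses of completeness and uniformity are genuinely used, is the first step: completeness of $Y$ is what manufactures the candidate limit $L$ out of nothing, and uniform convergence is exactly what lets the estimate on $d_Y(f_m(x), f_n(x))$ survive the passage $x \to c$. I expect no real obstacle beyond keeping the roles of the two applications of uniform convergence straight; everywhere else the proof is a routine triangle-inequality bookkeeping, which is presumably why the paper defers it. I would choose closed-ball (non-strict) inequalities when taking limits inside $d_Y$ to avoid spurious strictness issues, since $d_Y(L_m, L_n) \leq \varepsilon$ rather than $< \varepsilon$ is all that is needed for the Cauchy conclusion.
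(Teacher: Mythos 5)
Your proof is correct and is the standard Moore--Osgood argument (uniform convergence makes the inner limits $L_n$ Cauchy, completeness of $Y$ produces the common value $L$, and an $\varepsilon/3$ estimate identifies the other iterated limit with $L$). The paper explicitly omits its own proof of this lemma as elementary, so there is nothing to compare against; your write-up fills the gap correctly, including the careful use of non-strict inequalities when passing $x \to c$ inside the metric.
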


\begin{cor}\label{BanachC^1}
The vector spaces $C^1(R, K)$ and $N^1(R, K)$ are $K$-Banach spaces with respect to the norm $|\cdot|_1$.
\end{cor}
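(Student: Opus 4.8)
The plan is to show that $|\cdot|_1$ is an ultrametric norm, then prove completeness by passing a Cauchy sequence to its wavelet coefficients and reconstructing the limit, and finally to treat $N^1(R,K)$ as a closed subspace. Throughout I write $c_r(f):=b_r(f)\gamma_r^{-1}$, so that by $(\ref{def of norm})$ we have $|f|_1=\sup_{r\in\mathcal{R}}|c_r(f)|$, where $c_0(f)=f(0)$ and, by $(\ref{fracb_rgamma})$, $c_r(f)=\Phi_1f(r,r_-)$ for $r\in\mathcal{R}_+$. Finiteness of $|f|_1$ for $f\in C^1(R,K)$ is immediate: $\Phi_1f$ extends to a continuous function on the compact space $R^2$, hence is bounded, so $|f|_1\le\max\{|f(0)|,|\Phi_1f|_{\sup}\}<\infty$. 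Homogeneity and the strong triangle inequality $|f+g|_1\le\max\{|f|_1,|g|_1\}$ follow at once from the $K$-linearity of $f\mapsto b_r(f)$ and the ultrametric inequality for $|\cdot|$; and if $|f|_1=0$ then every $c_r(f)$, hence every $b_r(f)$, vanishes, so $f=\sum_{r\in\mathcal{R}}b_r(f)\chi_r=0$.

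For completeness, let $\{f_k\}_{k\ge1}$ be Cauchy in $(C^1(R,K),|\cdot|_1)$. Since $|f_k-f_l|_1=\sup_{r}|c_r(f_k)-c_r(f_l)|$, the scalars $\{c_r(f_k)\}_k$ are Cauchy \emph{uniformly} in $r$; as $K$ is complete they converge uniformly to a family $(c_r)_{r\in\mathcal{R}}$. Put $f:=\sum_{r\in\mathcal{R}}c_r\gamma_r\chi_r$. This series converges in $C(R,K)$: the bound $\sup_r|c_r|\le\lim_k|f_k|_1<\infty$ together with $|\gamma_r|=q^{-l(r)+1}\to0$ and the finiteness of each $\mathcal{R}_m$ shows that the coefficients $c_r\gamma_r$ tend to $0$ in the sense required for convergence of a wavelet expansion, so $f\in C(R,K)$. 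By uniqueness of the wavelet expansion, $b_r(f)=c_r\gamma_r$, i.e. $c_r(f)=c_r$.

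The crux is to show $f\in C^1(R,K)$, which is exactly an interchange of the limits $\lim_{r\to a}$ and $\lim_{k\to\infty}$, and this is where Lemma \ref{MOthm} enters. Fix $a\in R$ and apply Moore--Osgood's theorem with $Y=K$, $X=R$, $S=\{r\in\mathcal{R}_+\mid r\ne a\}$ (which has $a$ as a limit point because $\mathcal{R}$ is dense in $R$), $c=a$, and $g_k\colon S\to K$ given by $g_k(r)=c_r(f_k)=\Phi_1f_k(r,r_-)$. The hypotheses hold: $g_k\to(c_r)_r$ uniformly on $S$ by the previous paragraph, and for each fixed $k$ the inner limit $\lim_{\substack{r\to a\\ a\ne r\in\mathcal{R}_+}}g_k(r)=D_1f_k(a)$ exists since $f_k\in C^1(R,K)$, by Theorem \ref{mainforn=0}. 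The conclusion then yields that $\lim_{\substack{r\to a\\ a\ne r\in\mathcal{R}_+}}c_r(f)$ exists for every $a\in R$, whence $f\in C^1(R,K)$ again by Theorem \ref{mainforn=0}. Since $|f_k-f|_1=\sup_r|c_r(f_k)-c_r|\to0$, we conclude $f_k\to f$, so $C^1(R,K)$ is a Banach space.

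Finally, for $N^1(R,K)$ I would show it is a closed subspace of $C^1(R,K)$ and invoke that a closed subspace of a Banach space is Banach. By Theorem \ref{gN^1}, $N^1(R,K)=\{f\in C^1(R,K)\mid \lim_{r\in\mathcal{R}_+}c_r(f)=0\}$. If $f_k\in N^1(R,K)$ and $f_k\to f$ in $|\cdot|_1$, then given $\varepsilon>0$ pick $k$ with $\sup_r|c_r(f_k)-c_r(f)|<\varepsilon$ and a finite $S_\varepsilon\subset\mathcal{R}_+$ with $|c_r(f_k)|<\varepsilon$ off $S_\varepsilon$; the ultrametric inequality gives $|c_r(f)|<\varepsilon$ for $r\in\mathcal{R}_+\setminus S_\varepsilon$, so $f\in N^1(R,K)$. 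The only genuine obstacle in the whole argument is the limit interchange, which is isolated cleanly in the Moore--Osgood step, the care being to set up the punctured domain $S$ so that the uniform-convergence hypothesis and the existence of each inner limit (from $f_k\in C^1(R,K)$) are both available; the remaining verifications are routine ultrametric bookkeeping.
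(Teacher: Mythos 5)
Your proof is correct and follows essentially the same route as the paper: pass the Cauchy sequence to its wavelet coefficients, reconstruct the limit function $f=\sum_{r\in\mathcal{R}}c_r\gamma_r\chi_r$, and use Moore--Osgood (Lemma \ref{MOthm}) together with Theorem \ref{mainforn=0} to interchange the limits and conclude $f\in C^1(R,K)$. The only difference is that for $N^1(R,K)$ you argue via closedness in $C^1(R,K)$ using Theorem \ref{gN^1}, whereas the paper simply omits that case as ``a similar argument''; your version is a valid way to finish it.
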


\begin{proof}
We omit the proof for $N^1(R, K)$ because it can be checked by the similar argument to the following proof for $C^1(R, K)$. Let $f=\sum_{r \in \mathcal{R}}b_r(f)\chi_r \in C^1(R, K)$. Since $\Phi_1f : R^2 \rightarrow K$ is continuous, there exists $M>0$ such that $|\Phi_1f|_{\sup}\leq M$. It follows that 
\begin{align*}
|f|_{1}=\sup_{r \in \mathcal{R}}\{|b_r(f)\gamma_r^{-1}|\} \leq \max\{|f(0)|, M\} <\infty
\end{align*}
by (\ref{fracb_rgamma}). It is clear that $|\cdot|_1$ is a norm of $C^1(R, K)$.

We show that $(C^1(R, K), |\cdot|_1)$ is complete. Let $\{f_m\}_{m \geq 1}$ be a Cauchy sequence in $(C^1(R, K), |\cdot|_1)$. That is, for any $\varepsilon >0$ there exists $N \in \mathbb{Z}_{>0}$ such that $|f_l-f_m|<\varepsilon$ for $l, m \geq N$. Then, since 
\begin{align*}
|b_r(f_l)-b_r(f_m)|&\leq |b_r(f_l)-b_r(f_m)||\gamma_r|^{-1}\leq |f_l-f_m|_{1}<\varepsilon
\end{align*}
for any $r \in \mathcal{R}_+$, the sequence $\{b_r(f_m)\}_{m \geq 1}$ is Cauchy in $K$. Put $b_r(f) \coloneqq \lim_{m \to \infty} b_r(f_m)$ and $f\coloneqq \sum_{r \in \mathcal{R}}b_r(f)\chi_r$. It is enough to show that $f \in C^1(R, K)$ and $\lim_{m \to \infty}|f-f_m|_1=0$. Let $a \in R$ and 
$S=\mathcal{R}_+ \setminus \{a\}$. Define $g_m :S \rightarrow K$ to be $g_m(r)=b_r(f_m)\gamma_r^{-1}$. We see that the sequence $\{g_m\}_{m \geq 1}$ is uniformly convergent on $S$. Since $f_m \in C^1(R, K)$ for any $m \geq 1$, the limit $\lim_{\substack{r \to a \\ a \neq r \in \mathcal{R}_+}}g_m(r)$ exits for each $a \in R$ by Theorem \ref{mainforn=0}. Hence, Lemma \ref{MOthm} implies that the limit
\begin{align*}
\lim_{\substack{r \to a \\ a \neq r \in \mathcal{R}_+}}b_r(f)\gamma_r^{-1}=\lim_{m \to \infty}\lim_{\substack{r \to a \\ a \neq r \in \mathcal{R}_+}}g_m(r)
\end{align*}
exists and it follows that $f \in C^1(R, K)$. Finally, since
\begin{align*}
|f-f_m|_{1}=\sup_{r \in \mathcal{R}}\{|g_m(r)-b_r(f)\gamma_r^{-1}|\}<\varepsilon
\end{align*}
for sufficiently large $m \in \mathbb{Z}_{>0}$, we conclude the proof.
\end{proof}

\if0
\subsection{Generalization of Lemma \ref{keylem1}}

In this subsection, we prove the following lemma. 

\begin{lem}\label{keylem2}
Let $n \geq 0$, $f \in C^n(R, K)$, $a \in R$, $c \in K$, and $\delta, \varepsilon >0$. Suppose that 
\begin{align*}
\left|\psi_{n-j}D_jf(r, r_-)-\binom{n+1}{j}c\right|<\varepsilon
\end{align*}
for any $0 \leq j \leq n$ and $r \in \mathcal{R}_+$ with $|r-a|<\delta$ and $|r_--a|<\delta$. Then we have 
\begin{align*}
\left|\psi_nf(x, y)-c\right|<\varepsilon
\end{align*}
for any distinct $x, y \in R$ with $|x-a|<\delta$ and $|y-a|<\delta$.
\end{lem}

Note that Lemma \ref{keylem2} for $n=0$ coincides with Lemma \ref{keylem1}. To prove Lemma \ref{keylem2}, we prepare two lemmas.

\begin{lem}\label{lemlem1}
Let $n \geq 0$ and $f \in C^n(R, K)$. For distinct elements $x, y, z \in R$, we have
\begin{align*}
\psi_nf(x, y)=\left(\frac{x-z}{x-y}\right)^{n+1}\psi_nf(x, z)-\sum_{l=0}^n\left(\frac{y-z}{x-y}\right)^{n+1-l}\psi_{n-l}D_lf(y, z).
\end{align*}
\end{lem}

\begin{proof}
We check the assertion by induction on $n$. We already proved for $n=0$ in (\ref{lemlem1.1}). Let $n \geq 0$ and suppose the assertion holds for $n$. Then we see that
\begin{align*}
&\left(\frac{x-z}{x-y}\right)^{n+2}\psi_{n+1}f(x, z)-\sum_{l=0}^{n+1}\left(\frac{y-z}{x-y}\right)^{n+2-l}\psi_{n+1-l}D_lf(y, z) \\
=&\left(\frac{x-z}{x-y}\right)^{n+2}\frac{\psi_{n}f(x, z)-D_{n+1}f(z)}{x-z}-\left(\frac{y-z}{x-y}\right)\frac{D_{n+1}f(y)-D_{n+1}f(z)}{y-z} \\
&-\sum_{l=0}^{n}\left(\frac{y-z}{x-y}\right)^{n+2-l}\frac{\psi_{n-l}D_lf(y, z)-D_{n+1-l}D_lf(z)}{y-z} \\
=&\frac{1}{x-y}\left\{\left(\frac{x-z}{x-y}\right)^{n+1}\psi_nf(x, z)-\sum_{l=0}^n\left(\frac{y-z}{x-y}\right)^{n+1-l}\psi_{n-l}D_lf(y, z)\right\}-\frac{D_{n+1}f(y)}{x-y} \\
&-\frac{D_{n+1}f(z)}{(x-y)^{n+2}}\left\{(x-z)^{n+1}-\sum_{l=0}^n\binom{n+1}{l}(x-y)^l(y-z)^{n+1-l}-(x-y)^{n+1}\right\} \\
=&\frac{\psi_nf(x, y)-D_{n+1}f(y)}{x-y}-\frac{D_{n+1}f(z)}{(x-y)^{n+2}}\left\{(x-z)^{n+1}-(x-z)^{n+1}\right\} \\
=&\psi_{n+1}f(x, y).
\end{align*}
Here, we used (\ref{defpsi_jf2}) in the first and fourth equation, the induction hypothesis in the second and the third equation, and (\ref{propertyD_j}) in the second equation. Hence, the assertion also holds for $n+1$.
\end{proof}

\begin{lem}\label{lemlem2}
Let $n \geq 0$, $m \geq 2$ and $f \in C^n(R, K)$. For distinct elements $t_1, \cdots, t_m \in R$, $2 \leq j \leq m$ and $1 \leq l \leq n$, put 
\begin{align*}
\lambda_j^{(n)}=\left(\frac{t_j-t_{j-1}}{t_m-t_1}\right)^{n+1}, \ \mu_{l, j}^{(n)}=\frac{(t_j-t_{j-1})^l(t_{j-1}-t_1)^{n+1-l}}{(t_m-t_1)^{n+1}}.
\end{align*}
Then we have
\begin{align}\label{lemlem2former}
\psi_nf(t_m, t_1)=\sum_{j=2}^m \lambda_j^{(n)}\psi_nf(t_j, t_{j-1})+\sum_{l=1}^n\sum_{j=3}^m \mu_{l, j}^{(n)}\psi_{n-l}D_lf(t_{j-1}, t_1)
\end{align}
and
\begin{align}\label{lemlem2later}
\sum_{j=2}^m \lambda_j^{(n)}+\sum_{l=1}^n\sum_{j=3}^m \binom{n+1}{l}\mu_{l, j}^{(n)}=1.
\end{align}
Here, the empty sum is understood to be 0.
\end{lem}

\begin{proof}
We assume that $m \geq 3$ since it is clear for $m=2$. For (\ref{lemlem2later}), we see that
\begin{align*}
&\sum_{j=2}^m\left(\frac{t_j-t_{j-1}}{t_m-t_1}\right)^{n+1}+\sum_{l=1}^n\sum_{j=3}^m\binom{n+1}{l}\frac{(t_j-t_{j-1})^l(t_{j-1}-t_1)^{n+1-l}}{(t_m-t_1)^{n+1}} \\
=&\frac{1}{(t_m-t_1)^{n+1}}\sum_{j=3}^m\sum_{l=1}^{n+1}\binom{n+1}{l}(t_j-t_{j-1})^l(t_{j-1}-t_1)^{n+1-l}+\left(\frac{t_2-t_{1}}{t_m-t_1}\right)^{n+1} \\
=&\frac{1}{(t_m-t_1)^{n+1}}\sum_{j=3}^m\left\{(t_j-t_1)^{n+1}-(t_{j-1}-t_1)^{n+1}\right\}+\left(\frac{t_2-t_{1}}{t_m-t_1}\right)^{n+1} \\
=&\frac{1}{(t_m-t_1)^{n+1}}\left\{(t_m-t_1)^{n+1}-(t_2-t_1)^{n+1}\right\}+\left(\frac{t_2-t_{1}}{t_m-t_1}\right)^{n+1}=1.
\end{align*}
We prove (\ref{lemlem2former}) by induction on $n$. We already shown for $n=0$ in (\ref{lemlem2.1}). Let $n \geq 0$ and suppose that (\ref{lemlem2former}) holds for $n$. Then we have 
\begin{align*}
&\sum_{j=2}^m \lambda_j^{(n+1)}\psi_{n+1}f(t_j, t_{j-1})+\sum_{l=1}^{n+1}\sum_{j=3}^m \mu_{l, j}^{(n+1)}\psi_{n+1-l}D_lf(t_{j-1}, t_1) \\
=&\sum_{j=2}^m \left(\frac{t_j-t_{j-1}}{t_m-t_1}\right)^{n+2}\frac{\psi_nf(t_j, t_{j-1})-D_{n+1}f(t_{j-1})}{t_j-t_{j-1}} \\
&+\sum_{l=1}^{n}\sum_{j=3}^m \frac{(t_j-t_{j-1})^l(t_{j-1}-t_1)^{n+2-l}}{(t_m-t_1)^{n+2}}\cdot\frac{\psi_{n-l}D_lf(t_{j-1}, t_1)-D_{n+1-l}D_lf(t_1)}{t_{j-1}-t_1}  \\
&+\sum_{j=3}^m \frac{(t_j-t_{j-1})^{n+1}(t_{j-1}-t_1)}{(t_m-t_1)^{n+2}}\cdot\frac{D_{n+1}f(t_{j-1})-D_{n+1}f(t_{1})}{t_{j-1}-t_{1}} \\
=&\frac{1}{t_m-t_1}\left\{\sum_{j=2}^m \lambda_j^{(n)}\psi_nf(t_j, t_{j-1})+\sum_{l=1}^n\sum_{j=3}^m \mu_{l, j}^{(n)}\psi_{n-l}D_lf(t_{j-1}, t_1)\right\} \\
&-\sum_{l=1}^{n}\sum_{j=3}^m \binom{n+1}{l}\frac{(t_j-t_{j-1})^l(t_{j-1}-t_1)^{n+1-l}}{(t_m-t_1)^{n+2}}D_{n+1}f(t_1) \\
&-\sum_{j=3}^m \frac{(t_j-t_{j-1})^{n+1}}{(t_m-t_1)^{n+2}}D_{n+1}f(t_1)-\frac{(t_2-t_{1})^{n+1}}{(t_m-t_1)^{n+2}}D_{n+1}f(t_1) \\
=&\frac{\psi_nf(x, y)}{t_m-t_1}-\frac{1}{(t_m-t_1)^{n+2}}\sum_{j=3}^mD_{n+1}f(t_1)\sum_{l=1}^{n+1}\binom{n+1}{l}(t_j-t_{j-1})^l(t_{j-1}-t_1)^{n+1-l} \\
&-\frac{(t_2-t_{1})^{n+1}}{(t_m-t_1)^{n+2}}D_{n+1}f(t_1) \\
=&\frac{\psi_nf(x, y)}{t_m-t_1}-\frac{1}{(t_m-t_1)^{n+2}}\sum_{j=3}^m\left\{(t_j-t_1)^{n+1}-(t_{j-1}-t_1)^{n+1}\right\}D_{n+1}f(t_1) \\
&-\frac{(t_2-t_{1})^{n+1}}{(t_m-t_1)^{n+2}}D_{n+1}f(t_1) \\
=&\frac{\psi_nf(x, y)}{t_m-t_1}-\frac{1}{(t_m-t_1)^{n+2}}\left\{(t_m-t_1)^{n+1}-(t_2-t_1)^{n+1}\right\}D_{n+1}f(t_1) \\
&-\frac{(t_2-t_{1})^{n+1}}{(t_m-t_1)^{n+2}}D_{n+1}f(t_1) \\
=&\frac{\psi_nf(t_m, t_1)-D_{n+1}f(t_1)}{t_m-t_1}=\psi_{n+1}f(t_m, t_1).
\end{align*}
\end{proof}

We prove Lemma \ref{keylem2} in a similar way to the proof of Lemma \ref{keylem1}.

\begin{proof}[Proof of Lemma \ref{keylem2}]
We prove the assertion by induction on $n$. For $n=0$, we already proved Lemma \ref{keylem1}. Let $n >0$ and suppose that the assertions hold for $0, 1, \cdots, n-1$.

For the same reason as the proof of lemma \ref{keylem1}, we may assume that $x, y \in \mathcal{R}_+$, $|x-a|<\delta$ and $|y-a|<\delta$. Set $z$ to be (\ref{defz}) (i.e. $z$ is the common initial part in the $\pi$-adic expansions of $x$ and $y$). By the definition of $z$, we see that $z \lhd x, z \lhd y$, $|z-a|<\delta$ and $|x-y|=\max\{|z-x|, |z-y|\}$. Since 
\begin{align*}
\psi_nf(x, y)=\left(\frac{x-z}{x-y}\right)^{n+1}\psi_nf(x, z)-\sum_{l=0}^n\left(\frac{y-z}{x-y}\right)^{n+1-l}\psi_{n-l}D_lf(y, z)
\end{align*}
by Lemma \ref{lemlem1} and
\begin{align*}
\left(\frac{x-z}{x-y}\right)^{n+1}-\sum_{l=0}^n\binom{n+1}{l}\left(\frac{y-z}{x-y}\right)^{n+1-l}=1,
\end{align*}
it follows that
\begin{align*}
&|\psi_nf(x, y)-c| \\
=&\left|\left(\frac{x-z}{x-y}\right)^{n+1}\psi_nf(x, z)-\sum_{l=0}^n\left(\frac{y-z}{x-y}\right)^{n+1-l}\psi_{n-l}D_lf(y, z) \right. \\
&\left.-\left\{\left(\frac{x-z}{x-y}\right)^{n+1}-\sum_{l=0}^n\binom{n+1}{l}\left(\frac{y-z}{x-y}\right)^{n+1-l}\right\}c\right| \\
\leq &\max_{0 \leq l \leq n}\left\{\left|\frac{x-z}{x-y}\right|^{n+1}|\psi_nf(x, z)-c|, \ \left|\frac{y-z}{x-y}\right|^{n+1-l}\left|\psi_{n-l}D_lf(y, z)-\binom{n+1}{l}c\right|\right\}.
\end{align*}
Let $1 \leq l \leq n$. For any $0 \leq i \leq n-l$ and any $r \in \mathcal{R}_+$ with $|r-a|<\delta$ and $|r_--a|<\delta$, we have
\begin{align*}
&\left|\psi_{n-l-i}D_iD_lf(r,r_-)-\binom{n+1-l}{i}\binom{n+1}{l}c\right| \\
=&\left|\binom{i+l}{l}\psi_{n-i-l}D_{i+l}f(r,r_-)-\binom{n+1-l}{i}\binom{n+1}{l}c\right| \\
=&\left|\binom{i+l}{l}\left\{\psi_{n-i-l}D_{i+l}f(r,r_-)-\binom{n+1}{i+l}c\right\}\right|<\varepsilon
\end{align*}
by the assumption. Hence, it follows from the induction hypothesis that 
\begin{align}\label{cons_of_hyp}
\left|\psi_{n-l}D_lf(y, z)-\binom{n+1}{l}c\right|<\varepsilon
\end{align}
for any distinct $y, z \in R$ with $|y-a|<\delta$ and $|z-a|<\delta$ and it suffices to show that $|\psi_nf(x, z)-c|<\varepsilon$ and $|\psi_nf(y, z)-c|<\varepsilon$. Thus, we may assume that $y \lhd x$ by replacing $z$ with $y$. Then there exists the unique sequence $t_1=y \lhd t_2 \lhd \cdots \lhd t_n=x$ in $\mathcal{R}$ such that $(t_j)_-=t_{j-1}$ for each $2 \leq j \leq n$ and $|t_j-a|<\delta$ for each $1 \leq j \leq n$. Put
\begin{align*}
\lambda_j^{(n)}=\left(\frac{t_j-t_{j-1}}{x-y}\right)^{n+1}, \ \mu_{l, j}^{(n)}=\frac{(t_j-t_{j-1})^l(t_{j-1}-y)^{n+1-l}}{(x-y)^{n+1}}
\end{align*}
for each $2 \leq j \leq m$ and $1 \leq l \leq n$. Then we see that $|\lambda_j^{(n)}|\leq 1$, $|\mu_{l, j}^{(n)}|\leq 1$, 
\begin{align*}
\psi_nf(x, y)=\sum_{j=2}^m \lambda_j^{(n)}\psi_nf(t_j, t_{j-1})+\sum_{l=1}^n\sum_{j=3}^m \mu_{l, j}^{(n)}\psi_{n-l}D_lf(t_{j-1}, y)
\end{align*}
and
\begin{align*}
\sum_{j=2}^m \lambda_j^{(n)}+\sum_{l=1}^n\sum_{j=3}^m \binom{n+1}{l}\mu_{l, j}^{(n)}=1
\end{align*}
by Lemma \ref{lemlem2}. Hence, we obtain 
\begin{align*}
&|\psi_nf(x, y)-c| \\
=&\left|\sum_{j=2}^m \lambda_j^{(n)}\psi_nf(t_j, t_{j-1})+\sum_{l=1}^n\sum_{j=3}^m \mu_{l, j}^{(n)}\psi_{n-l}D_lf(t_{j-1}, y)\right. \\
&\left.-\left\{\sum_{j=2}^m \lambda_j^{(n)}+\sum_{l=1}^n\sum_{j=3}^m \binom{n+1}{l}\mu_{l, j}^{(n)}\right\}c\right| \\
\leq&\max\left\{ \max_{2 \leq j \leq m}\left\{|\lambda_j^{(n)}||\psi_nf(t_j, t_{j-1})-c|\right\}, \ \max_{\substack{1 \leq l \leq n \\ 3 \leq j \leq m}}\left\{|\mu_{l, j}^{(n)}|\left|\psi_{n-l}D_lf(t_{j-1}, y)-\binom{n+1}{l}c\right|\right\}\right\} \\
<&\varepsilon
\end{align*}
by using (\ref{cons_of_hyp}) and the induction hypothesis.
\end{proof}
\fi

\section{Proof of Theorem \ref{main}}

In the following, if $\operatorname{char}(K)=p>0$, we also assume that $n \leq p-1$. To prove Theorem \ref{main} by induction on $n$, we suppose that the assertions hold for $0, \cdots, n-1$ in this section. Hence, we have
\begin{description}
\item[(IH1)] The $K$-vector spaces $(C(R, K), |\cdot|_{\sup})$ and $(C^j(R, K), |\cdot|_j)$ (where $|\cdot|_j$ is given by (\ref{def of norm})) are $K$-Banach spaces for $1 \leq j \leq n$.
\item[(IH2)] For $0 \leq j \leq n-1$, the set $\{\gamma_r^{j-l}(x-r)^l\chi_r(x) \mid r \in \mathcal{R}, 0 \leq l \leq j\}$ is an orthonormal basis for $C^j(R, K)$.
\item[(IH3)] For any $0 \leq j \leq n-1$ and $f \in C^j(R, K)$, $f$ has the representation $f(x)=\sum_{r \in \mathcal{R}}\sum_{l=0}^j b_r^{j, l}(f)\gamma_r^{j-l}(x-r)^l\chi_r(x)$, where
\begin{align*}
b_r^{j, l}(f)=
\begin{cases}
D_lf(0) & \text{if} \ r=0 \\
\gamma_r \psi_{j-l}D_lf(r, r_-) & \text{if} \ r \neq 0.
\end{cases}
\end{align*}
\item[(IH4)] For any $0 \leq j \leq n-1$ and $f(x)=\sum_{r \in \mathcal{R}}\sum_{l=0}^j b_r^{j, l}(f)\gamma_r^{j-l}(x-r)^l\chi_r(x) \in C^j(R, K)$, then $f \in C^{j+1}(R, K)$ if and only if the limits $\lim_{\substack{r \to a \\ a \neq r \in \mathcal{R}_+}}b_r^{j, l}(f)\gamma_r^{-1}$ exist for all $a \in R$ and $0 \leq l \leq j$ and satisfy
\begin{align*}
\lim_{\substack{r \to a \\ a \neq r \in \mathcal{R}_+}}b_r^{j, l}(f)\gamma_r^{-1}=\binom{j+1}{l}\lim_{\substack{r \to a \\ a \neq r \in \mathcal{R}_+}}b_r^{j, 0}(f)\gamma_r^{-1}.
\end{align*}
\end{description}

\subsection{$C^n$-antiderivation}

To construct an orthonormal basis of $C^n(R, K)$, we introduce a $C^n$-antiderivation and prove some properties. For $x=\sum_{i=0}^\infty c_i\pi^i \in R$, we write
\begin{align*}
x_m=
\begin{cases}
0 & \text{if} \ m=0 \\
\sum_{i=0}^{m-1}c_i\pi^i & \text{if} \ m \geq 1.
\end{cases}
\end{align*}

\begin{dfn}
Let $n \geq 1$. For $f \in C^{n-1}(R, K)$, we define the {\it $C^n$-antiderivation} $P_nf : R \rightarrow K$ to be
\begin{align}\label{defP_n}
P_nf(x)=\sum_{j=0}^{n-1}\sum_{m=0}^\infty \frac{f^{(j)}(x_m)}{(j+1)!}(x_{m+1}-x_m)^{j+1}.
\end{align}
\end{dfn}

It is known that 
\begin{align*}
P_n : C^{n-1}(R, K) \rightarrow C^n(R, K) \ ; \ f \mapsto P_nf
\end{align*}
is $K$-linear and satisfies $(P_nf)'=f$ (\cite[Theorem 81.3]{Sc84}).

\begin{lem}\label{P_nf(r)}
Let $n \geq 1$ and $f \in C^{n-1}(R, K)$. We have
\begin{align*}
P_nf(r)-P_nf(r_-)=\sum_{j=1}^n \frac{(r-r_-)^j}{j!}f^{(j-1)}(r_-)
\end{align*}
for any $r \in \mathcal{R}_+$. (See (\ref{defr_-}) for the definition of $r_-$ and Remark \ref{rmkPhi}(3).)
\end{lem}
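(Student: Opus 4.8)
The plan is to unwind the definition (\ref{defP_n}) of $P_nf$ directly in terms of the $\pi$-adic truncations $x_m$ and to observe that, in the difference $P_nf(r)-P_nf(r_-)$, all but one of the summands cancel. Write $L \coloneqq l(r)$ and let $r=\sum_{i=0}^{L-1}a_i\pi^i$ be the $\pi$-adic expansion of $r$, so that $a_{L-1}\neq0$ and $r_-=\sum_{i=0}^{L-2}a_i\pi^i$.

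First I would record how the truncation sequences of $r$ and of $r_-$ behave. For $r$ itself one has $r_{m+1}-r_m=a_m\pi^m$ when $0\le m\le L-1$ and $r_{m+1}-r_m=0$ when $m\ge L$, since $r_m=r$ for every $m\ge L$. For $r_-$ the decisive observation is that its truncations agree with those of $r$ up to index $L-1$: namely $(r_-)_m=r_m$ for $0\le m\le L-1$, whereas $(r_-)_m=r_-$ for $m\ge L-1$. Consequently $(r_-)_{m+1}-(r_-)_m=a_m\pi^m$ for $0\le m\le L-2$ and vanishes for $m\ge L-1$.

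Substituting these into (\ref{defP_n}), the vanishing of the increments past the relevant index truncates the inner sums, giving
\[
P_nf(r)=\sum_{j=0}^{n-1}\sum_{m=0}^{L-1}\frac{f^{(j)}(r_m)}{(j+1)!}(a_m\pi^m)^{j+1},\qquad
P_nf(r_-)=\sum_{j=0}^{n-1}\sum_{m=0}^{L-2}\frac{f^{(j)}(r_m)}{(j+1)!}(a_m\pi^m)^{j+1},
\]
where in the second formula I have used $(r_-)_m=r_m$ for $m\le L-2$. Subtracting, every term with $0\le m\le L-2$ cancels and only the $m=L-1$ summand of $P_nf(r)$ survives:
\[
P_nf(r)-P_nf(r_-)=\sum_{j=0}^{n-1}\frac{f^{(j)}(r_{L-1})}{(j+1)!}(a_{L-1}\pi^{L-1})^{j+1}.
\]

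To finish I would substitute $r_{L-1}=r_-$ and $a_{L-1}\pi^{L-1}=r-r_-$ and reindex $j\mapsto j-1$, turning the sum over $0\le j\le n-1$ into $\sum_{j=1}^{n}\frac{(r-r_-)^j}{j!}f^{(j-1)}(r_-)$, which is the desired identity; here the derivatives $f^{(j)}$ for $0\le j\le n-1$ are well defined by Remark \ref{rmkPhi}(4), since $f\in C^{n-1}(R,K)$. The argument involves no genuine difficulty beyond careful bookkeeping; the one point to get right, and the closest thing to an obstacle, is the second step, where one must verify that the truncation sequences of $r$ and $r_-$ coincide exactly through index $L-1$ and that the inner sums in (\ref{defP_n}) really do terminate, so that the cancellation is clean and isolates precisely the single $m=L-1$ term.
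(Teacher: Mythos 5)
Your proof is correct and follows essentially the same route as the paper's: both unwind the definition of $P_nf$, observe that the truncation sequences of $r$ and $r_-$ agree through index $l(r)-1$ so that all terms with $m\le l(r)-2$ cancel, and isolate the single $m=l(r)-1$ summand before reindexing. No issues.
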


\begin{proof}
Let $r=\sum_{i=0}^{l(r)-1} c_i\pi^i \in \mathcal{R}_+$. Considering $r_m=r$ if $m \geq l(r)$ and
\begin{align*}
(r_-)_m=
\begin{cases}
r_m & \text{if} \ m \leq l(r)-1 \\
r_- & \text{if} \ m \geq l(r),
\end{cases}
\end{align*}
we obtain
\begin{align*}
&P_nf(r)-P_nf(r_-) \\
=&\sum_{j=0}^{n-1}\sum_{m=0}^{l(r)-1} \frac{f^{(j)}(r_m)}{(j+1)!}(r_{m+1}-r_m)^{j+1}-\sum_{j=0}^{n-1}\sum_{m=0}^{l(r)-2} \frac{f^{(j)}((r_-)_m)}{(j+1)!}((r_-)_{m+1}-(r_-)_m)^{j+1} \\
=&\sum_{j=0}^{n-1} \frac{f^{(j)}(r_{l(r)-1})}{(j+1)!}(r_{l(r)}-r_{l(r)-1})^{j+1}=\sum_{j=0}^{n-1} \frac{f^{(j)}(r_-)}{(j+1)!}(r-r_-)^{j+1}.
\end{align*}
\end{proof}

\begin{lem}\label{sup_k}
Let $1 \leq k \leq n$. For any $f \in C^k(R, K)$, we have
\begin{align*}
|D_kf|_{\sup} \leq \sup_{r \in \mathcal{R}_+}\{|\psi_{k-1}f(r, r_-)|\} \leq |f|_k.
\end{align*}
(See (\ref{defpsi_jf1}) for the definition of $\psi_{k-1}f$.)
\end{lem}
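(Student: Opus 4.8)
The plan is to establish the two inequalities separately, reading them from right to left in difficulty. Throughout I fix $1 \le k \le n$ and $f \in C^k(R,K)$.

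For the right-hand inequality $\sup_{r \in \mathcal{R}_+}\{|\psi_{k-1}f(r,r_-)|\} \le |f|_k$, I would unwind the definition of the norm $|\cdot|_k$ from (\ref{def of norm}) and the formula for the wavelet coefficients. By the inductive hypothesis (IH3) applied with $j = k-1$ and $l = 0$, the coefficient of $f$ as a $C^{k-1}$-function is $b_r^{k-1,0}(f) = \gamma_r\,\psi_{k-1}f(r,r_-)$ for $r \in \mathcal{R}_+$, so that $b_r^{k-1,0}(f)\gamma_r^{-1} = \psi_{k-1}f(r,r_-)$. Since the norm $|f|_k$ is defined in (\ref{def of norm}) (at level $n+1 = k$, i.e.\ using the $C^{k-1}$-expansion) as the supremum over all $r \in \mathcal{R}$ and all $0 \le l \le k-1$ of $|b_r^{k-1,l}(f)\gamma_r^{-1}|$, the term with $l=0$ is exactly $|\psi_{k-1}f(r,r_-)|$, and taking the supremum over $r \in \mathcal{R}_+$ only over a subset of the quantities entering the definition of $|f|_k$ yields the desired inequality immediately.

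For the left-hand inequality $|D_kf|_{\sup} \le \sup_{r \in \mathcal{R}_+}\{|\psi_{k-1}f(r,r_-)|\}$, the key observation is that $\psi_{k-1}f(x,y) = \Phi_k f(x,y,\dots,y)$ (the remark just after (\ref{defpsi_jf2})), and that $D_kf(x) = \Phi_k f(x,\dots,x)$ is obtained as the diagonal limit. I would argue that for each $a \in R$, the value $D_kf(a) = \lim_{(x,y)\to(a,a)}\psi_{k-1}f(x,y)$ can be approximated by values $\psi_{k-1}f(r,r_-)$ with $r \in \mathcal{R}_+$ close to $a$: by the same reduction used in the proof of Theorem \ref{mainforn=0}, points $r, r_- \in \mathcal{R}$ with $r \to a$ realize the diagonal limit, so $D_kf(a)$ lies in the closure of the set $\{\psi_{k-1}f(r,r_-) : r \in \mathcal{R}_+\}$. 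Since the non-Archimedean norm is continuous and the bound $\sup_{r}|\psi_{k-1}f(r,r_-)|$ is a closed condition, we get $|D_kf(a)| \le \sup_{r \in \mathcal{R}_+}\{|\psi_{k-1}f(r,r_-)|\}$ for every $a$, hence the inequality after taking the supremum over $a$.

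The main obstacle is the left-hand inequality: one must be careful that the diagonal limit defining $D_kf(a)$ is genuinely attained through the restricted sequences $(r,r_-)$, rather than arbitrary sequences in $\bigtriangledown^2 R$. This is precisely the content extracted from the continuity of $\Phi_k f$ together with the limit-approximation argument of Theorem \ref{mainforn=0} (and Remark \ref{rmkPhi}(2)); the passage from the limit along $(r, r_-)$ to the full diagonal limit $D_kf(a)$ relies on $\psi_{k-1}f = \Phi_k f(\cdot, \cdot, y, \dots, y)$ extending continuously to the diagonal, which is guaranteed by $f \in C^k(R,K)$. The right-hand inequality, by contrast, is essentially bookkeeping once the coefficient formula from (IH3) is in hand.
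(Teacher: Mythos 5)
Your proposal is correct and follows essentially the same route as the paper: the right-hand inequality is exactly the identification $b_r^{k-1,0}(f)\gamma_r^{-1}=\psi_{k-1}f(r,r_-)$ plus the observation that this is one of the quantities entering the definition of $|f|_k$, and the left-hand inequality comes from realizing the diagonal limit $D_kf(a)=\lim_{(x,y)\to(a,a)}\psi_{k-1}f(x,y)$ along the pairs $(r,r_-)$ with $r\to a$. The only cosmetic difference is that the paper phrases the last step via the ultrametric equality $|\psi_{k-1}f(r,r_-)|=|D_kf(a)|$ for $r$ close to $a$ (when $D_kf(a)\neq 0$), whereas you pass to the limit of the norms, which is equally valid.
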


\begin{proof}
Since $f \in C^k(R, K)$, we have
\begin{align*}
\lim_{(x, y) \to (a, a)} \psi_{k-1}f(x, y)=\lim_{(x, y) \to (a, a)} \Phi_kf(x, y, \cdots, y)=D_kf(a)
\end{align*}
for each $a \in R$. If $a \in R$ satisfies $D_kf(a)=0$, we have $0=|D_kf(a)| \leq |\psi_{k-1}f(r, r_-)|$ for each $r \in \mathcal{R}_+$. For $a \in R$ with $D_kf(a) \neq 0$, there exists $\delta >0$ such that $|\psi_{k-1}f(r, r_-)-D_kf(a)|<|D_kf(a)|$ if $|r-a|<\delta$ and $|r_--a|<\delta$. Then we have
\begin{align*}
|\psi_{k-1}f(r, r_-)|=\max\{|\psi_{k-1}f(r, r_-)-D_kf(a)|, |D_kf(a)|\}=|D_kf(a)|.
\end{align*}
Thus, we see that
\begin{align*}
|D_kf|_{\sup} \leq \sup_{r \in \mathcal{R}_+}\{|\psi_{k-1}f(r, r_-)|\}=\sup_{r \in \mathcal{R}_+}\{|b_r^{k-1, 0}(f)\gamma_r^{-1}|\}\leq |f|_k.
\end{align*}
\end{proof}

\begin{prop}\label{propP_n}
\begin{enumerate}
\item For any $f \in C^{n-1}(R, K)$, we have $|P_nf|_n \leq |(n!)^{-1}f|_{n-1}$. (See (\ref{defP_n}) for the definition of $P_n$.)
\item The $K$-linear map
\begin{align*}
P_n : C^{n-1}(R, K) \rightarrow C^n(R, K) \ ; \ f \mapsto P_nf
\end{align*}
is continuous.
\item For any $0 \leq k \leq n-1$ and $r \in \mathcal{R}$, we have
\begin{align*}
P_n(x-r)^k\chi_r(x)=\frac{1}{k+1}(x-r)^{k+1}\chi_r(x).
\end{align*}
\end{enumerate}
\end{prop}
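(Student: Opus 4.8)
The three assertions are essentially independent, so the plan is to establish (3) first by a direct computation, then (1) by computing the $(n-1)$-level coefficients of $P_nf$, and finally to read off (2) as a formal consequence of (1).

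For (3), I would evaluate the defining series $(\ref{defP_n})$ directly on $g=(x-r)^k\chi_r$, which lies in $C^{n-1}(R,K)$ by (IH2) since $k\le n-1$. As $\chi_r$ is locally constant, $g^{(j)}(y)=\frac{k!}{(k-j)!}(y-r)^{k-j}$ for $y\in D_r$ and $0\le j\le k$, and $g^{(j)}(y)=0$ otherwise. Writing $l=l(r)$, one checks that for $x\in D_r$ the truncation $x_m$ lies in $D_r$ exactly when $m\ge l$, with $x_l=r$, while for $x\notin D_r$ no $x_m$ lies in $D_r$; in the latter case the series vanishes and matches $\frac{1}{k+1}(x-r)^{k+1}\chi_r(x)=0$. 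For $x\in D_r$, put $u_m=x_m-r$, so $u_l=0$ and $u_m\to x-r$; the series becomes $\sum_{j=0}^k\frac{k!}{(k-j)!(j+1)!}\sum_{m\ge l}u_m^{k-j}(u_{m+1}-u_m)^{j+1}$. Since $u_{m+1}-u_m\to0$ every term tends to $0$, so the family is unconditionally summable and may be regrouped; the binomial expansion of the telescoping sum $\sum_{m\ge l}(u_{m+1}^{k+1}-u_m^{k+1})=(x-r)^{k+1}$ reproduces precisely these coefficients after dividing by $k+1$, which gives (3).

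For (1), I would compute the coefficients of $g\coloneqq P_nf$ entering $(\ref{def of norm})$. Since $(P_nf)^{(j)}=f^{(j-1)}$ and $g\in C^n$, the relation $(\ref{Dandder})$ gives $D_jg=f^{(j-1)}/j!=D_{j-1}f/j$ for $1\le j\le n$. Hence for $1\le j\le n-1$ one obtains $b_r^{n-1,j}(g)\gamma_r^{-1}=\frac{1}{j}\psi_{n-1-j}D_{j-1}f(r,r_-)=\frac{1}{j}b_r^{n-2,j-1}(f)\gamma_r^{-1}$, whose absolute value is $\frac{1}{|j|}|b_r^{n-2,j-1}(f)\gamma_r^{-1}|\le\frac{1}{|j|}|f|_{n-1}$. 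The decisive case is $j=0$: here $b_r^{n-1,0}(g)\gamma_r^{-1}=\psi_{n-1}g(r,r_-)$, and I would substitute Lemma \ref{P_nf(r)} together with $D_lg(r_-)=f^{(l-1)}(r_-)/l!$ into $(\ref{defpsi_jf1})$. The sums $\sum_{j=1}^n$ and $\sum_{l=1}^{n-1}$ then cancel term by term, leaving only $\frac{(r-r_-)^n}{n!}f^{(n-1)}(r_-)$, so that $\psi_{n-1}g(r,r_-)=\frac{1}{n}D_{n-1}f(r_-)$, which is bounded by $\frac{1}{|n|}|f|_{n-1}$ via Lemma \ref{sup_k}. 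The case $r=0$ is immediate because $P_nf(0)=0$ and $D_jg(0)=D_{j-1}f(0)/j$. Since $|n!|=\prod_{i=1}^n|i|\le|j|$ for every $1\le j\le n$, each bound is at most $|n!|^{-1}|f|_{n-1}=|(n!)^{-1}f|_{n-1}$, and taking the supremum over $r$ and $0\le j\le n-1$ yields $|P_nf|_n\le|(n!)^{-1}f|_{n-1}$.

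Assertion (2) is then immediate: $P_n$ is $K$-linear by construction, and (1) shows it is bounded (with the supremum norm on the source when $n=1$ and $|\cdot|_{n-1}$ when $n\ge2$), hence continuous. I expect the main obstacle to be the $j=0$ computation in (1): one must verify that, after inserting Lemma \ref{P_nf(r)}, all lower-order terms in $(\ref{defpsi_jf1})$ cancel exactly so that only the top term $\frac{(r-r_-)^n}{n!}f^{(n-1)}(r_-)$ survives and $\psi_{n-1}(P_nf)(r,r_-)$ collapses to $\frac{1}{n}D_{n-1}f(r_-)$. The same binomial telescoping that drives (3) is the computational heart of that step.
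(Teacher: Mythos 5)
Your proposal is correct and follows essentially the same route as the paper: part (3) by the same telescoping binomial computation on the defining series, part (1) by showing $b_r^{n-1,j}(P_nf)\gamma_r^{-1}=j^{-1}b_r^{n-2,j-1}(f)\gamma_r^{-1}$ for $j\geq 1$ and $\psi_{n-1}P_nf(r,r_-)=\tfrac{1}{n}D_{n-1}f(r_-)$ via Lemma \ref{P_nf(r)}, then bounding with Lemma \ref{sup_k} and $|n!|\leq|j|$, and part (2) as a formal consequence. The only cosmetic difference is that you derive $D_jP_nf=j^{-1}D_{j-1}f$ from $(\ref{Dandder})$ and $(P_nf)'=f$, where the paper invokes $(\ref{propertyD_j})$; both are valid.
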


\begin{proof}
1. Let $r \in \mathcal{R}_+$. We see that
\begin{align*}
b_r^{n-1, j}(P_nf)\gamma_r^{-1}&=\psi_{n-1-j}D_jP_nf(r, r_-) \\
&=\psi_{n-1-j}\binom{j}{1}^{-1}D_{j-1}D_1P_nf(r, r_-) \\
&=j^{-1}\psi_{n-1-j}D_{j-1}f(r, r_-)=j^{-1}b_r^{n-2, j-1}(f)\gamma_r^{-1}
\end{align*}
for $1 \leq j \leq n-1$, where we used (\ref{propertyD_j}) and the induction hypothesis (IH3), and that
\begin{align*}
b_r^{n-1, 0}(P_nf)\gamma_r^{-1}&=\psi_{n-1}P_nf(r, r_-) \\
&=\frac{P_nf(r)-P_nf(r_-)-\sum_{l=1}^{n-1}\gamma_r^lD_lP_nf(r_-)}{\gamma_r^n} \\
&=\frac{1}{n!}f^{(n-1)}(r_-)=\frac{1}{n}D_{n-1}f(r_-)
\end{align*}
by Lemma \ref{P_nf(r)} and (\ref{Dandder}). We also find that $b_0^{n-1, j}(P_nf)=D_jP_nf(0)=j^{-1}D_{j-1}f(0)=j^{-1}b_0^{n-2, j-1}(f)$ for $1 \leq j \leq n-1$, and that $b_0^{n-1, 0}(P_nf)=P_nf(0)=0$. Hence, it follows that
\begin{align*}
|P_nf|_n&=\sup_{r \in \mathcal{R}}\{|b_r^{n-1, 0}(P_nf)\gamma_r^{-1}|, \cdots, |b_r^{n-1, n-1}(P_nf)\gamma_r^{-1}|\} \\
&=\sup_{r \in \mathcal{R}} \left\{ \left|\frac{1}{n}D_{n-1}f(r_-)\right|, |b_r^{n-2, 0}(f)\gamma_r^{-1}|, \cdots, |(n-1)^{-1}b_r^{n-2, n-2}(f)\gamma_r^{-1}|\right\} \\
&\leq \max\left\{\left|\frac{1}{n}D_{n-1}f\right|_{\sup}, \left|\frac{1}{(n-1)!}f\right|_{n-1}\right\} \leq \left|\frac{1}{n!}f\right|_{n-1}.
\end{align*}
Here, we used the induction hypothesis (IH1) Lemma \ref{sup_k} in the last inequality.

2. This follows from the assertion (1).

3. Let $0 \leq k \leq n-1$ and $r \in \mathcal{R}$. If $r \ntriangleleft x$, we see that $P_n(x-r)^k\chi_r(x)=\frac{1}{k+1}(x-r)^{k+1}\chi_r(x)=0$. ($\lhd$ is defined in (\ref{deflhd}).) Suppose that $r \lhd x$. Since 
\begin{align*}
\left((x-r)^k\chi_r(x)\right)^{(j)}=
\begin{cases}
\frac{k!}{(k-j)!}(x-r)^{k-j}\chi_r(x) & \text{if} \ 0 \leq j \leq k \\
0 & \text{if} \ j \geq k+1
\end{cases}
\end{align*}
and $\chi_r(x_m)=0$ for $0 \leq m \leq l(r)-1$, we obtain 
\begin{align*}
P_n(x-r)^k\chi_r(x)&=\sum_{j=0}^{k}\sum_{m=0}^\infty \frac{1}{(j+1)!}\frac{k!}{(k-j)!}(x_m-r)^{k-j}\chi_r(x_m)(x_{m+1}-x_m)^{j+1} \\
&=\frac{1}{k+1}\chi_r(x)\sum_{j=0}^{k}\sum_{m=l(r)}^\infty \binom{k+1}{j+1} (x_m-r)^{k-j}(x_{m+1}-x_m)^{j+1} \\
&=\frac{1}{k+1}\chi_r(x)\sum_{m=l(r)}^\infty \left\{(x_{m+1}-r)^{k+1}-(x_m-r)^{k+1}\right\} \\
&=\frac{1}{k+1}(x-r)^{k+1}\chi_r(x).
\end{align*}
\end{proof}

\begin{prop}
Let $T_n \coloneqq n! P_n \circ \cdots \circ P_1: C(R, K) \rightarrow C^n(R, K)$. Then we have $|T_nf|_n=|f|_{\sup}$ for any $f \in C(R, K)$.
\end{prop}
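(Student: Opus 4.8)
The plan is to establish the equality by proving the two inequalities $|T_nf|_n\le|f|_{\sup}$ and $|T_nf|_n\ge|f|_{\sup}$ separately, organizing the argument as an induction on $n$ (the base case $n=0$ being trivial since $T_0=\mathrm{id}$ and $|\cdot|_0=|\cdot|_{\sup}$). At the outset I would record that the hypothesis $n\le p-1$ (or $\operatorname{char}(K)=0$) forces $|m|=1$ for each $1\le m\le n$, so that $n!$ is a unit and $|n!|=1$; this normalization is exactly what makes both inequalities sharp.

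For the upper bound I would use the factorization $T_n=n\,P_n\circ T_{n-1}$, which follows from $T_n=n!\,P_n\circ\cdots\circ P_1$ and $T_{n-1}=(n-1)!\,P_{n-1}\circ\cdots\circ P_1$. Applying Proposition \ref{propP_n}(1) to $g:=T_{n-1}f\in C^{n-1}(R,K)$ gives $|P_ng|_n\le|(n!)^{-1}g|_{n-1}=|g|_{n-1}$, and since $|n|=1$ this reads $|T_nf|_n=|P_n(T_{n-1}f)|_n\le|T_{n-1}f|_{n-1}$. The inductive hypothesis $|T_{n-1}f|_{n-1}=|f|_{\sup}$ then yields $|T_nf|_n\le|f|_{\sup}$.

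For the lower bound the decisive identity is $D_n(T_nf)=f$. Since each antiderivation satisfies $(P_kg)'=g$, differentiating $T_nf$ a total of $n$ times peels off all $n$ factors $P_n,\dots,P_1$ and produces $(T_nf)^{(n)}=n!\,f$ (here $T_nf\in C^n(R,K)$ is $n$-times differentiable by Remark \ref{rmkPhi}(4)). Combining this with $n!\,D_n=(\,\cdot\,)^{(n)}$ from (\ref{Dandder}) and cancelling the unit $n!$ gives $D_n(T_nf)=f$. Applying Lemma \ref{sup_k} with $k=n$ to the function $T_nf$ then gives $|f|_{\sup}=|D_n(T_nf)|_{\sup}\le|T_nf|_n$. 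Together the two inequalities prove $|T_nf|_n=|f|_{\sup}$.

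I expect the main obstacle to be the lower bound, specifically the bookkeeping behind $D_n(T_nf)=f$: one must iterate $(P_kg)'=g$ correctly along the composition and keep track of the scalar $n!$, which is precisely where $n\le p-1$ re-enters, so that dividing by $n!$ in (\ref{Dandder}) is legitimate. By contrast the upper bound is nearly immediate from Proposition \ref{propP_n}(1) once the unit factors $|n|=|n!|=1$ are observed.
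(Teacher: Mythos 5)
Your lower bound is sound and is essentially the paper's own argument: the paper also concludes with $|f|_{\sup}=|D_nT_nf|_{\sup}\leq|T_nf|_n$ via Lemma \ref{sup_k}, obtaining $D_nT_nf=f$ from the explicit formula $D_jT_n\chi_{r}=\binom{n}{j}(x-r)^{n-j}\chi_{r}$ rather than from your (equally valid) iteration of $(P_kg)'=g$ combined with (\ref{Dandder}). Note that this step only needs $n!\neq 0$ in $K$, not $|n!|=1$.

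The upper bound, however, has a genuine gap. Your normalization claim --- that $n\leq p-1$ \emph{or} $\operatorname{char}(K)=0$ forces $|m|=1$ for all $1\leq m\leq n$ --- is false in the second case: a characteristic-zero local field is a finite extension of $\mathbb{Q}_p$, so $|p|<1$, and Section 3 imposes the restriction $n\leq p-1$ \emph{only} when $\operatorname{char}(K)=p>0$. Thus the proposition must cover $\operatorname{char}(K)=0$ with $n\geq p$, where $|n!|<1$. There your chain degrades to
\begin{align*}
|T_nf|_n=|n|\,|P_n(T_{n-1}f)|_n\leq |n|\,|n!|^{-1}|T_{n-1}f|_{n-1}=|(n-1)!|^{-1}|f|_{\sup},
\end{align*}
which exceeds $|f|_{\sup}$ as soon as $n\geq p+1$. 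The underlying issue is that Proposition \ref{propP_n}(1) is a worst-case bound over all of $C^{n-1}(R,K)$, and applying it to $g=T_{n-1}f$ discards the special structure of functions in the image of $T_{n-1}$. The paper's proof avoids this by computing $T_n\chi_{r_0}=(x-r_0)^n\chi_{r_0}$ via Proposition \ref{propP_n}(3) and then evaluating the wavelet coefficients directly, finding $b_r^{n-1,j}(T_n\chi_{r_0})=\binom{n}{j}\gamma_r$: the potentially large factors $(j!)^{-1}$ contributed by the antiderivations are absorbed into \emph{integral} binomial coefficients, which have norm at most $1$ (with equality at $j=0$), giving $|T_n\chi_{r_0}|_n=1$ and hence $|T_nf|_n\leq\sup_r|b_r(f)|=|f|_{\sup}$ with no restriction on $n$. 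Your argument is complete in the characteristic-$p$ case (and for $n\leq p$ in mixed characteristic), but to close the gap in general you would need to exhibit this cancellation rather than invoke Proposition \ref{propP_n}(1) termwise.
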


\begin{proof}
First, we show $|T_n\chi_r|_n=1$ for any $r \in \mathcal{R}$. Let $r_0 \in \mathcal{R}$. Since 
\begin{align*}
P_k(x-r_0)^{k-1}\chi_{r_0}(x)=P_n(x-r_0)^{k-1}\chi_{r_0}(x)=\frac{1}{k}(x-r_0)^k\chi_{r_0}(x)
\end{align*}
for $1 \leq k \leq n$, we find that $T_n\chi_{r_0}(x)=(x-r_0)^n\chi_{r_0}(x)$. To obtain $|T_n\chi_{r_0}|_n$, we compute $b_r^{n-1, j}(T_n\chi_{r_0})$ for $r \in \mathcal{R}$ and $0 \leq j \leq n-1$. Since 
\begin{align*}
D_jT_n\chi_{r_0}&=(j!)^{-1}\left((x-r_0)^n\chi_{r_0}(x)\right)^{(j)} \\
&=\frac{1}{j!}\frac{n!}{(n-j)!}(x-r_0)^{n-j}\chi_{r_0}(x)=\binom{n}{j}(x-r_0)^{n-j}\chi_{r_0}(x),
\end{align*}
we see that $b_0^{n-1, j}(T_n\chi_{r_0})=D_jT_n\chi_{r_0}(0)=0$ for $r=0$. If $r \neq 0$ and $r_0 \lhd r_-$, we obtain
\begin{align*}
b_r^{n-1, j}(T_n\chi_{r_0})&=\gamma_r \psi_{n-1-j}D_jT_n\chi_{r_0}(r, r_-) \\
&=\frac{D_jT_n\chi_{r_0}(r)-D_jT_n\chi_{r_0}(r_-)-\sum_{l=1}^{n-1-j}\gamma_r^lD_lD_jT_n\chi_{r_0}(r_-)}{\gamma_r^{n-1-j}} \\
&=\binom{n}{j}\frac{(r-r_0)^{n-j}-(r_--r_0)^{n-j}-\sum_{l=1}^{n-1-j}\binom{n-j}{l}(r-r_-)^l(r_--r_0)^{n-j-l}}{\gamma_r^{n-1-j}} \\
&=\binom{n}{j}\frac{(r-r_0)^{n-j}-\left\{(r-r_0)^{n-j}-(r-r_-)^{n-j}\right\}}{\gamma_r^{n-1-j}}=\binom{n}{j}\gamma_r.
\end{align*}
By the same computation, it follows that $b_r^{n-1, j}(T_n\chi_{r_0})=0$ if $r_0 \ntriangleleft r_-$. Thus, we find that
\begin{align*}
|T_n\chi_{r_0}|_n&=\sup_{r \in \mathcal{R}_+}\left\{|b_r^{n-1, 0}(T_n\chi_{r_0})\gamma_r^{-1}|, \cdots, |b_r^{n-1, n-1}(T_n\chi_{r_0})\gamma_r^{-1}|\right\} \\
&=\max_{0 \leq j \leq n-1}\left\{ \left|\binom{n}{j}\right| \right\}=1.
\end{align*}

We prove that $|T_nf|_n=|f|_{\sup}$ for $f \in C(R, K)$. Let $f=\sum_{r \in \mathcal{R}}b_r(f)\chi_r \in C(R, K)$. Since $T_nf=\sum_{r \in \mathcal{R}}b_r(f)(x-r)^n\chi_r(x)$, it follows that
\begin{align*}
|T_nf|_n&=\left|\sum_{r \in \mathcal{R}}b_r(f)(x-r)^n\chi_r(x)\right|_n \\
&\leq \sup_{r \in \mathcal{R}} \left\{ \left|b_r(f)(x-r)^n\chi_r(x)\right|_n\right\} \\
&=\sup_{r \in \mathcal{R}}\{|b_r(f)|\}=|f|_{\sup}=|D_nT_nf|_{\sup} \leq |T_nf|_n.
\end{align*}
Here, we used Lemma \ref{sup_k} in the last inequality. We are done.
\end{proof}

\subsection{Proof of Theorem \ref{main} $(1)$ and $(2)$}

\begin{lem}\label{half_corofmain}
Let $n \geq 1$ and $f=\sum_{r \in \mathcal{R}}b_r(f)\chi_r \in C(R, K)$. If $f \in C^n(R, K)$ and $f'=0$, then we have $\lim_{r \in \mathcal{R}_+}b_r(f)\gamma_r^{-n}=0$. (See (\ref{lim_r}) for the definition of this limit.)
\end{lem}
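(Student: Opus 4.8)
The plan is to recognize this statement as the higher-order analogue of the ``only if'' direction of Theorem \ref{gN^1}, and to reduce it to a continuity-plus-compactness argument of exactly the same shape as the first half of that proof, with $\Phi_1f$ replaced by $\psi_{n-1}f$. The whole point will be to convert $b_r(f)\gamma_r^{-n}$ into a value of a function that extends continuously to all of $R^2$ and vanishes on the diagonal.

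First I would record that $D_lf=0$ for all $1\leq l\leq n$. Since $f\in C^n(R,K)$, Remark \ref{rmkPhi}(4) gives that $f$ is $n$-times differentiable with $l!\,D_lf=f^{(l)}$ for $1\leq l\leq n$ (see (\ref{Dandder})). Starting from $f^{(1)}=f'=0$, an immediate induction on $l$ yields $f^{(l)}=(f^{(l-1)})'=0$, and since $l!$ is invertible in $K$ for $l\leq n$ (this is exactly where the hypothesis $n\leq p-1$ when $\operatorname{char}(K)=p$ enters), we conclude $D_lf=0$ for $1\leq l\leq n$. Next I would substitute these vanishing derivatives into the definition (\ref{defpsi_jf1}) of $\psi_{n-1}f$: the entire correction sum disappears, leaving $\psi_{n-1}f(x,y)=(f(x)-f(y))/(x-y)^n$, and evaluating at $(r,r_-)$ gives $\psi_{n-1}f(r,r_-)=b_r(f)\gamma_r^{-n}$ for every $r\in\mathcal{R}_+$ (cf. (\ref{fracb_rgamma})). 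Thus the claim is precisely $\lim_{r\in\mathcal{R}_+}\psi_{n-1}f(r,r_-)=0$ in the sense of (\ref{lim_r}).

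Finally I would exploit continuity. Because $f\in C^n(R,K)$, the identity $\psi_{n-1}f(x,y)=\Phi_nf(x,y,\dots,y)$ together with the continuous extension of $\Phi_nf$ to $R^{n+1}$ shows that $\psi_{n-1}f$ extends to a continuous function on all of $R^2$ whose value on the diagonal is $\Phi_nf(a,\dots,a)=D_nf(a)=0$. From here the argument is verbatim the first half of the proof of Theorem \ref{gN^1}: covering the compact diagonal $\Delta_2$ by boxes $U_a(\delta_a)$ on which $\lvert\psi_{n-1}f\rvert<\varepsilon$ and passing to a finite subcover produces, for each $\varepsilon>0$, a single $\delta>0$ with $\lvert\psi_{n-1}f(x,y)\rvert<\varepsilon$ whenever $\lvert x-y\rvert<\delta$ (the ultrametric inequality makes this passage clean); since $\lvert r-r_-\rvert=q^{-l(r)+1}$ and the set $\{r\in\mathcal{R}_+\mid l(r)\leq 1-\log_q\delta\}$ is finite, this gives the desired limit. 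I expect the only genuinely load-bearing step to be the reduction of the second paragraph, namely establishing both $D_lf=0$ and the resulting collapse $\psi_{n-1}f(r,r_-)=b_r(f)\gamma_r^{-n}$; once $\psi_{n-1}f$ is seen to be continuous and diagonal-vanishing, the limiting estimate is routine and not an obstacle.
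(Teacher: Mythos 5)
Your proof is correct and follows essentially the same route as the paper: both arguments reduce the claim to the uniform estimate $|(f(x)-f(y))/(x-y)^n|<\varepsilon$ for $|x-y|<\delta$ via compactness of the diagonal, and then restrict to the pairs $(r,r_-)$ exactly as in the first half of the proof of Theorem \ref{gN^1}. The only difference is that the paper obtains the pointwise limit $\lim_{(x,y)\to(a,a)}(f(x)-f(y))/(x-y)^n=0$ by citing \cite[Theorem 29.12]{Sc84}, whereas you re-derive it directly from $D_lf=0$ and the collapse of $\psi_{n-1}f$, which is a valid (and self-contained) substitute.
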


\begin{proof}
By \cite[Theorem 29.12]{Sc84}, the assumption is equivalent to the condition that 
\begin{align*}
\lim_{(x, y) \to (a, a)} \frac{f(x)-f(y)}{(x-y)^n}=0
\end{align*}
for each $a \in R$. The compactness of $\Delta_2$ (see (\ref{defdelta})) implies that for any $\varepsilon>0$ there exists $\delta>0$ such that $|(f(x)-f(y))/(x-y)^n|<\varepsilon$ for all $x, y \in R$ with $|x-y|<\delta$. (Compare (\ref{U_a}).) By setting $S_{\varepsilon} \coloneqq \{r \in \mathcal{R}_+ \mid l(r) \leq 1-\log_q \delta\}$, we see that $|(f(r)-f(r_-))/(r-r_-)^n|<\varepsilon$ for any $r \in \mathcal{R}_+ \setminus S_{\varepsilon}$. This means that
\begin{align*}
\lim_{r \in \mathcal{R}_+}\frac{f(r)-f(r_-)}{(r-r_-)^n}=\lim_{r \in \mathcal{R}_+}\frac{b_r(f)}{\gamma_r^{n}}=0.
\end{align*}
\end{proof}

%\subsection{Proof of Theorem \ref{main} $(1)$ and $(2)$}

\begin{proof}[Proof of Theorem \ref{main} (1)]
Since $\{\chi_r \mid r \in \mathcal{R}\}$ is an orthonormal basis for $C(R, K)$ and $T_n$ is norm-preserving, $\{T_n\chi_r=(x-r)^n\chi_r(x) \mid r \in \mathcal{R}\}$ is an orthonormal set in $C^n(R, K)$. Let $c_{j, r} \in K$ for each $r \in \mathcal{R}$ and $0 \leq j \leq n-1$ and put $f=\sum_{r \in \mathcal{R}}\sum_{j=0}^{n-1} c_{j, r} \gamma_r^{n-j}(x-r)^j\chi_r \in C^n(R, K) \subset C^{n-1}(R, K)$. Then we see that $|\gamma_r^{n-j}(x-r)^j\chi_r|_n=1$ for each $r \in \mathcal{R}$ and $0 \leq j \leq n-1$ and that
\begin{align*}
|f|_n&=\sup_{r \in \mathcal{R}}\{|c_{0, r}\gamma_r\gamma_r^{-1}|, \cdots, |c_{n-1, r}\gamma_r\gamma_r^{-1}|\} \\
&=\sup_{r \in \mathcal{R}}\{|c_{0, r}|, \cdots, |c_{n-1, r}|\}.
\end{align*}
Hence, $\{\gamma_r^n\chi_r, \gamma_r^{n-1}(x-r)\chi_r, \cdots, \gamma_r(x-r)^{n-1}\chi_r \mid r \in \mathcal{R}\}$ is orthonormal in $C^n(R, K)$.

We prove $\{\gamma_r^n\chi_r, \gamma_r^{n-1}(x-r)\chi_r, \cdots, \gamma_r(x-r)^{n-1}\chi_r, (x-r)^n\chi_r \mid r \in \mathcal{R}\}$ is orthonormal in $C^{n}(R, K)$. It suffices to show that 
\begin{align*}
|f|_n=\sup_{r \in \mathcal{R}}\{|c_{0, r}|, \cdots, |c_{n-1, r}|, |c_{n, r}|\}
\end{align*}
for $f=\sum_{r \in \mathcal{R}}\sum_{j=0}^{n} c_{j, r} \gamma_r^{n-j}(x-r)^j\chi_r \in C^n(R, K)$. Set $N_n^n(R, K)=\{f \in C^n(R, K) \mid f^{(n)}=0\}$. Since, for any $f \in N_n^n(R, K)$, $g \in C(R, K)$ and $\lambda \in K$, 
\begin{align*}
|T_ng-\lambda f|_n & \geq |D_n(T_ng-\lambda f)|_{\sup} \\
&=\left|\frac{1}{n!}(T_ng-\lambda f)^{(n)}\right|_{\sup}=|g|_{\sup}=|T_ng|_n,
\end{align*}
we have $N_n^n(R, K) \perp \operatorname{Im} T_n$ in $C^n(R, K)$. Since $\sum_{r \in \mathcal{R}}\sum_{j=0}^{n-1} c_{j, r} \gamma_r^{n-j}(x-r)^j\chi_r \in N_n^n(R, K)$ and $\sum_{r \in \mathcal{R}}c_{n,r}(x-r)^n\chi_r \in \operatorname{Im} T_n$, we obtain
\begin{align*}
|f|_n&\geq \max\left\{\left|\sum_{r \in \mathcal{R}}\sum_{j=0}^{n-1} c_{j, r} \gamma_r^{n-j}(x-r)^j\chi_r\right|_n, \left|\sum_{r \in \mathcal{R}}c_{n,r}(x-r)^n\chi_r\right|_n\right\} \\
&=\max\left\{\sup_{r \in \mathcal{R}}\{|c_{0, r}|, \cdots, |c_{n-1, r}|\}, \sup_{r \in \mathcal{R}}\{|c_{n, r}|\}\right\} \\
&=\sup_{r \in \mathcal{R}}\{|c_{0, r}|, \cdots, |c_{n-1, r}|, |c_{n, r}|\}.
\end{align*}
On the other hand, since
\begin{align*}
|f|_n&\leq \max_{0 \leq j \leq n}\left\{\left|\sum_{r \in \mathcal{R}} c_{j, r} \gamma_r^{n-j}(x-r)^j\chi_r\right|_n\right\} \\
&=\max_{0 \leq j \leq n}\left\{\sup_{r \in \mathcal{R}}\{|c_{j, r}|\}\right\} \\
&=\sup_{r \in \mathcal{R}}\{|c_{0, r}|, \cdots, |c_{n-1, r}|, |c_{n, r}|\},
\end{align*}
it follows that $\{\gamma_r^n\chi_r, \gamma_r^{n-1}(x-r)\chi_r, \cdots, \gamma_r(x-r)^{n-1}\chi_r, (x-r)^n\chi_r \mid r \in \mathcal{R}\}$ is an orthonormal set in $C^{n}(R, K)$.

Finally, we check that $\{\gamma_r^n\chi_r, \gamma_r^{n-1}(x-r)\chi_r, \cdots, \gamma_r(x-r)^{n-1}\chi_r, (x-r)^n\chi_r \mid r \in \mathcal{R}\}$ is a basis for $C^{n}(R, K)$. For a given $f \in C^n(R, K)$, since $f' \in C^{n-1}(R, K)$, $f'$ has the representation
\begin{align*}
f'=\sum_{r \in \mathcal{R}}\sum_{j=0}^{n-1} b_r^{n-1, j}(f')\gamma_r^{n-1-j}(x-r)^j\chi_r
\end{align*}
in $C^{n-1}(R, K)$. Note that
\begin{align*}
b_r^{n-1, j}(f')=
\begin{cases}
D_jf'(0) & \text{if} \ r=0 \\
\gamma_r \psi_{n-1-j}D_jf'(r, r_-) & \text{if} \ r \neq 0
\end{cases}
\end{align*}
for $0 \leq j \leq n-1$, by the induction hypothesis (IH3). It follows from this representation and Proposition \ref{propP_n} that 
\begin{align*}
P_nf'=\sum_{r \in \mathcal{R}}\sum_{j=0}^{n-1} \frac{1}{j+1}b_r^{n-1, j}(f')\gamma_r^{n-1-j}(x-r)^{j+1}\chi_r \in C^n(R, K).
\end{align*}
Then, by putting $g \coloneqq f-P_nf'$, we see that $g \in C^n(R, K)$ and $g'=0$. Thus, we find that $\lim_{r \in \mathcal{R}_+}b_r(g)\gamma_r^{-n}=0$ by Lemma \ref{half_corofmain} and the infinite sum 
\begin{align*}
g=\sum_{r \in \mathcal{R}} \frac{b_r(g)}{\gamma_r^n}\gamma_r^n\chi_r
\end{align*}
converges in $C^n(R, K)$ (with respect to the norm $|\cdot|_n$). Hence, we obtain 
\begin{align*}
f&=g+P_nf' \\
&=\sum_{r \in \mathcal{R}}\left\{\frac{b_r(g)}{\gamma_r^n}\gamma_r^n\chi_r+\sum_{j=1}^{n} \frac{1}{j}b_r^{n-1, j-1}(f')\gamma_r^{n-j}(x-r)^j\chi_r\right\}.
\end{align*}
\end{proof}

\begin{proof}[Proof of Theorem \ref{main} (2)]
Let $f \in C^n(R, K)$. We keep the notations in the proof of (1) and compute $b_r^{n, j}(f)$ for each $0 \leq j \leq n$ by using the above proof of Theorem \ref{main} (1).

First, we compute $b_r^{n, 0}(f)$. If $r=0$, we have
\begin{align*}
b_0^{n, 0}(f)=b_0(g)=f(0)-P_nf'(0)=f(0)=D_0f(0).
\end{align*}
For $r \neq 0$, since 
\begin{align*}
b_r(g)&=f(r)-f(r_-)-P_nf'(r)+P_nf'(r_-) \\
&=f(r)-f(r_-)-\sum_{j=1}^n\frac{(r-r_-)^j}{j!}f^{(j)}(r_-) \\
&=f(r)-f(r_-)-\sum_{j=1}^n\gamma_r^jD_jf(r_-),
\end{align*}
we obtain
\begin{align*}
b_r^{n, 0}(f)=\frac{f(r)-f(r_-)-\sum_{j=1}^n\gamma_r^jD_jf(r_-)}{\gamma_r^n}=\gamma_r \psi_{n}f(r, r_-).
\end{align*}

Finally, it follows that
\begin{align*}
b_r^{n, j}(f)& =j^{-1}b_r^{n-1, j-1}(f') \\
&=
\begin{cases}
j^{-1}(D_{j-1}D_1f)(0)=D_jf(0) & \text{if} \ r=0 \\
j^{-1}\gamma_r\psi_{n-j}D_{j-1}D_1f(r, r_-)=\gamma_r\psi_{n-j}D_jf(r, r_-) & \text{if} \ r \neq 0
\end{cases}
\end{align*}
for each $1 \leq j \leq n$.
\end{proof}

\subsection{Generalizations of Lemma \ref{keylem1}}

We prepare several theorems to prove Theorem \ref{main} $(3)$ and $(4)$. The following theorem is a generalization of Lemma \ref{keylem1}.

\begin{thm}\label{keylem2}
Let $n \geq 0$, $f \in C^n(R, K)$, $a \in R$, $c \in K$, and $\delta, \varepsilon >0$. Suppose that 
\begin{align*}
\left|\psi_{n-j}D_jf(r, r_-)-\binom{n+1}{j}c\right|<\varepsilon
\end{align*}
for any $0 \leq j \leq n$ and $r \in \mathcal{R}_+$ with $|r-a|<\delta$ and $|r_--a|<\delta$. Then we have 
\begin{align*}
\left|\psi_nf(x, y)-c\right|<\varepsilon
\end{align*}
for any distinct $x, y \in R$ with $|x-a|<\delta$ and $|y-a|<\delta$.
\end{thm}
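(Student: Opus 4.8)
The plan is to induct on $n$, the base case $n=0$ being precisely Lemma~\ref{keylem1}; throughout I keep the standing assumption $n\le p-1$ when $\operatorname{char}(K)=p$, which makes every $D_jf$ with $j\le n$ available together with the relations (\ref{Dandder}) and (\ref{propertyD_j}). Assume the statement for all indices smaller than $n$. Exactly as in Lemma~\ref{keylem1}, I first reduce to $x,y\in\mathcal R$: since $f\in C^n(R,K)$ makes $\psi_nf$ continuous off the diagonal through (\ref{defpsi_jf1}), and since the ball $\{w\in K\mid|w-c|<\varepsilon\}$ is closed (open balls are closed in an ultrametric field), approximating $x,y$ by distinct sequences in $\mathcal R$ inside the given ball transfers the desired estimate from $\mathcal R$ to arbitrary distinct $x,y$.

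The inductive step rests on two algebraic identities, each established separately by induction on $n$ from the recursion (\ref{defpsi_jf2}) and the relation (\ref{propertyD_j}). The first is the three-point decomposition: for distinct $x,y,z$,
\begin{align*}
\psi_nf(x,y)=\left(\frac{x-z}{x-y}\right)^{n+1}\psi_nf(x,z)-\sum_{l=0}^n\left(\frac{y-z}{x-y}\right)^{n+1-l}\psi_{n-l}D_lf(y,z),
\end{align*}
whose coefficients satisfy $\left(\frac{x-z}{x-y}\right)^{n+1}-\sum_{l=0}^n\binom{n+1}{l}\left(\frac{y-z}{x-y}\right)^{n+1-l}=1$ (apply the binomial theorem to $\frac{x-z}{x-y}=1+\frac{y-z}{x-y}$). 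Note that the $l=0$ term of the sum is exactly $\psi_nf(y,z)$, since $D_0f=f$. The second is a telescoping-chain decomposition: along a chain $t_1\lhd t_2\lhd\cdots\lhd t_m$ in $\mathcal R$ with $(t_j)_-=t_{j-1}$, it writes $\psi_nf(t_m,t_1)$ as a combination of the consecutive values $\psi_nf(t_j,t_{j-1})$ and the lower terms $\psi_{n-l}D_lf(t_{j-1},t_1)$ ($1\le l\le n$), with all coefficients of absolute value $\le 1$ and with the coefficients weighted by $\binom{n+1}{l}$ summing to $1$.

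The heart of the argument is to propagate the hypothesis from $f$ to its derivatives. For $1\le l\le n$ I apply the theorem at the smaller index $n-l$ to $g\coloneqq D_lf\in C^{n-l}(R,K)$ (Remark~\ref{rmkPhi}(5)) with the constant $c'\coloneqq\binom{n+1}{l}c$. Using $D_iD_lf=\binom{i+l}{l}D_{i+l}f$ from (\ref{propertyD_j}) together with the identity $\binom{i+l}{l}\binom{n+1}{i+l}=\binom{n+1}{l}\binom{n+1-l}{i}$, the hypothesis for $f$ at index $n$ (taken with $j=i+l$) turns verbatim into the hypothesis for $g$ at index $n-l$; the induction hypothesis then yields
\begin{align*}
\left|\psi_{n-l}D_lf(y,z)-\binom{n+1}{l}c\right|<\varepsilon
\end{align*}
for all distinct $y,z$ in the ball, which controls the lower terms in both decompositions.

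It remains to assemble. Let $z$ be the common initial segment of $x$ and $y$ as in (\ref{defz}); then $z\lhd x$, $z\lhd y$, $z$ lies in the ball, and the three-point identity together with the ultrametric estimate of Remark~\ref{rmkconvex} (now with the several centres $c$ and $\binom{n+1}{l}c$) reduces the claim to $|\psi_nf(x,z)-c|<\varepsilon$ and $|\psi_nf(y,z)-c|<\varepsilon$, that is, to the case $y\lhd x$. Inserting the chain $t_1=y\lhd\cdots\lhd t_m=x$ and the telescoping identity, the consecutive values $\psi_nf(t_j,(t_j)_-)$ are within $\varepsilon$ of $c$ directly by the hypothesis (its index-$0$ instance), while the lower terms are controlled by the previous paragraph; the ultrametric estimate then gives $|\psi_nf(x,y)-c|<\varepsilon$. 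The main obstacle is the telescoping-chain identity, whose induction interlaces the $\psi$-terms with the $D$-terms and demands careful binomial bookkeeping, and, conceptually, the choice of the constant $c'=\binom{n+1}{l}c$ making the hypothesis stable under $f\mapsto D_lf$; once these and the binomial identity $\binom{i+l}{l}\binom{n+1}{i+l}=\binom{n+1}{l}\binom{n+1-l}{i}$ are in hand, the remainder is a direct ultrametric estimate mirroring Lemma~\ref{keylem1}.
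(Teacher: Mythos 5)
Your proposal is correct and follows essentially the same route as the paper: the same three-point decomposition and telescoping-chain identity (the paper's Lemmas \ref{lemlem1} and \ref{lemlem2}, each proved by induction from (\ref{defpsi_jf2}) and (\ref{propertyD_j})), the same propagation of the hypothesis to $D_lf$ with constant $\binom{n+1}{l}c$ via $D_iD_lf=\binom{i+l}{l}D_{i+l}f$ and the identity $\binom{i+l}{l}\binom{n+1}{i+l}=\binom{n+1}{l}\binom{n+1-l}{i}$ (using $\left|\binom{i+l}{l}\right|\le 1$), and the same final assembly reducing to $y\lhd x$ and applying the ultrametric estimate.
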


Note that Theorem \ref{keylem2} for $n=0$ coincides with Lemma \ref{keylem1}. To prove Theorem \ref{keylem2}, we prepare two lemmas.

\begin{lem}\label{lemlem1}
Let $n \geq 0$ and $f \in C^n(R, K)$. For pairwise distinct elements $x, y, z \in R$, we have
\begin{align*}
\psi_nf(x, y)=\left(\frac{x-z}{x-y}\right)^{n+1}\psi_nf(x, z)-\sum_{l=0}^n\left(\frac{y-z}{x-y}\right)^{n+1-l}\psi_{n-l}D_lf(y, z).
\end{align*}
\end{lem}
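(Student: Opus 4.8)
The plan is to prove Lemma~\ref{lemlem1} by induction on $n$, exactly mirroring the structure of the decomposition $(\ref{lemlem1.1})$ that already handles the base case $n=0$. The identity to be established expresses $\psi_nf(x,y)$ in terms of $\psi_nf(x,z)$ and the quantities $\psi_{n-l}D_lf(y,z)$ for $0\le l\le n$, so the natural engine is the defining recursion $(\ref{defpsi_jf2})$, namely $\psi_{j}g(u,v)=(\psi_{j-1}g(u,v)-D_jg(v))/(u-v)$, which lets one pass from the level-$(n+1)$ objects back to level-$n$ objects where the induction hypothesis applies.

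First I would assume the formula for $n$ and write down the candidate expression for $\psi_{n+1}f(x,y)$, namely
\begin{align*}
\left(\frac{x-z}{x-y}\right)^{n+2}\psi_{n+1}f(x,z)-\sum_{l=0}^{n+1}\left(\frac{y-z}{x-y}\right)^{n+2-l}\psi_{n+1-l}D_lf(y,z).
\end{align*}
To each level-$(n+1)$ difference quotient appearing here I would apply $(\ref{defpsi_jf2})$: replace $\psi_{n+1}f(x,z)$ by $(\psi_nf(x,z)-D_{n+1}f(z))/(x-z)$, replace the $l=n+1$ term $\psi_0D_{n+1}f(y,z)=\Phi_1 D_{n+1}f(y,z)=(D_{n+1}f(y)-D_{n+1}f(z))/(y-z)$, and for $1\le l\le n$ replace $\psi_{n+1-l}D_lf(y,z)$ by $(\psi_{n-l}D_lf(y,z)-D_{n+1-l}D_lf(z))/(y-z)$. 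This converts the whole expression into level-$n$ data. The crucial algebraic input is the composition rule $(\ref{propertyD_j})$, i.e. $D_{n+1-l}D_lf=\binom{n+1}{l}D_{n+1}f$, which collapses all the $D_{n+1-l}D_lf(z)$ into scalar multiples of the single value $D_{n+1}f(z)$.

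After this substitution I would group terms into three blocks. The first block assembles exactly $\frac{1}{x-y}$ times the level-$n$ formula given by the induction hypothesis, which by that hypothesis equals $\psi_nf(x,y)/(x-y)$; a separate piece contributes $-D_{n+1}f(y)/(x-y)$, and combining these via $(\ref{defpsi_jf2})$ at level $n+1$ produces precisely $\psi_{n+1}f(x,y)$, the desired left-hand side. The second block collects every coefficient of $D_{n+1}f(z)$: after inserting $(\ref{propertyD_j})$ the binomial coefficients $\binom{n+1}{l}$ appear, and the sum $\sum_{l=0}^{n+1}\binom{n+1}{l}(y-z)^{n+1-l}(x-y)^l=(x-z)^{n+1}$ by the binomial theorem, so the bracketed factor multiplying $D_{n+1}f(z)$ is $(x-z)^{n+1}-(x-z)^{n+1}=0$ and this block vanishes entirely.

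The step I expect to be the main obstacle is precisely this bookkeeping of the $D_{n+1}f(z)$-coefficients: the $-D_{n+1}f(z)/(x-z)$ from the first substitution carries a factor $(x-z)^{n+2}/(x-y)^{n+2}$, while the $D_{n+1-l}D_lf(z)$ terms for $0\le l\le n$ carry mixed powers of $(x-z)$, $(y-z)$, and $(x-y)$, and one must verify that the binomial theorem makes the telescoping cancellation $(x-z)^{n+1}-(x-z)^{n+1}=0$ exact rather than leaving a residual term. Keeping careful track of signs and of the $(x-y)^{-(n+2)}$ normalization is the only delicate point; once the vanishing of that block is confirmed, the identity for $n+1$ follows immediately, completing the induction.
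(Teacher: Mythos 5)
Your proposal is correct and follows essentially the same route as the paper's proof: induction on $n$ with base case $(\ref{lemlem1.1})$, expansion of the level-$(n+1)$ quantities via $(\ref{defpsi_jf2})$, the identity $D_{n+1-l}D_lf=\binom{n+1}{l}D_{n+1}f$ from $(\ref{propertyD_j})$, and the binomial-theorem cancellation of the $D_{n+1}f(z)$ coefficients. The only nitpick is that the recursion $(\ref{defpsi_jf2})$ must also be applied to the $l=0$ term $\psi_{n+1}D_0f(y,z)=\psi_{n+1}f(y,z)$ (your prose covers only $1\le l\le n$ and $l=n+1$), but this case is handled identically and does not affect the argument.
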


\begin{proof}
We check the assertion by induction on $n$. We already proved for $n=0$ in (\ref{lemlem1.1}). Let $n \geq 0$ and suppose the assertion holds for $n$. Then we see that
\begin{align*}
&\left(\frac{x-z}{x-y}\right)^{n+2}\psi_{n+1}f(x, z)-\sum_{l=0}^{n+1}\left(\frac{y-z}{x-y}\right)^{n+2-l}\psi_{n+1-l}D_lf(y, z) \\
=&\left(\frac{x-z}{x-y}\right)^{n+2}\frac{\psi_{n}f(x, z)-D_{n+1}f(z)}{x-z}-\left(\frac{y-z}{x-y}\right)\frac{D_{n+1}f(y)-D_{n+1}f(z)}{y-z} \\
&-\sum_{l=0}^{n}\left(\frac{y-z}{x-y}\right)^{n+2-l}\frac{\psi_{n-l}D_lf(y, z)-D_{n+1-l}D_lf(z)}{y-z} \\
=&\frac{1}{x-y}\left\{\left(\frac{x-z}{x-y}\right)^{n+1}\psi_nf(x, z)-\sum_{l=0}^n\left(\frac{y-z}{x-y}\right)^{n+1-l}\psi_{n-l}D_lf(y, z)\right\}-\frac{D_{n+1}f(y)}{x-y} \\
&-\frac{D_{n+1}f(z)}{(x-y)^{n+2}}\left\{(x-z)^{n+1}-\sum_{l=0}^n\binom{n+1}{l}(x-y)^l(y-z)^{n+1-l}-(x-y)^{n+1}\right\} \\
=&\frac{\psi_nf(x, y)-D_{n+1}f(y)}{x-y}-\frac{D_{n+1}f(z)}{(x-y)^{n+2}}\left\{(x-z)^{n+1}-(x-z)^{n+1}\right\} \\
=&\psi_{n+1}f(x, y).
\end{align*}
Here, we used (\ref{defpsi_jf2}) in the first and fourth equalities, the induction hypothesis in the second and the third equalities, and (\ref{propertyD_j}) in the second equality. Hence, the assertion also holds for $n+1$.
\end{proof}

\begin{lem}\label{lemlem2}
Let $n \geq 0$, $m \geq 2$ and $f \in C^n(R, K)$. For pairwise distinct elements $t_1, \cdots, t_m \in R$, $2 \leq j \leq m$ and $1 \leq l \leq n$, put 
\begin{align*}
\lambda_j^{(n)}=\left(\frac{t_j-t_{j-1}}{t_m-t_1}\right)^{n+1}, \ \mu_{l, j}^{(n)}=\frac{(t_j-t_{j-1})^l(t_{j-1}-t_1)^{n+1-l}}{(t_m-t_1)^{n+1}}.
\end{align*}
Then we have
\begin{align}\label{lemlem2former}
\psi_nf(t_m, t_1)=\sum_{j=2}^m \lambda_j^{(n)}\psi_nf(t_j, t_{j-1})+\sum_{l=1}^n\sum_{j=3}^m \mu_{l, j}^{(n)}\psi_{n-l}D_lf(t_{j-1}, t_1)
\end{align}
and
\begin{align}\label{lemlem2later}
\sum_{j=2}^m \lambda_j^{(n)}+\sum_{l=1}^n\sum_{j=3}^m \binom{n+1}{l}\mu_{l, j}^{(n)}=1.
\end{align}
Here, the empty sum is understood to be 0.
\end{lem}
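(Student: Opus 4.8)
The plan is to treat the two identities separately: the scalar identity (\ref{lemlem2later}) is purely algebraic and can be verified directly, while the functional identity (\ref{lemlem2former}) will be proved by induction on $n$, bootstrapping from the $n=0$ case already recorded as (\ref{lemlem2.1}) in the proof of Lemma \ref{keylem1}. Throughout one first notes that since $f \in C^n(R,K)$, Remark \ref{rmkPhi}(5) gives $D_l f \in C^{n-l}(R,K)$ for $0 \le l \le n$, so every term $\psi_{n-l} D_l f(\cdot,\cdot)$ appearing in (\ref{lemlem2former}) is well-defined on $\bigtriangledown^2 R$; the case $m=2$ is trivial (the double sums are empty and $\lambda_2^{(n)}=1$), so one may assume $m \ge 3$.

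For (\ref{lemlem2later}), I would split off the $j=2$ summand $(t_2-t_1)^{n+1}/(t_m-t_1)^{n+1}$ of the first sum and, for each fixed $j$ with $3 \le j \le m$, combine the inner sum $\sum_{l=1}^n \binom{n+1}{l}(t_j-t_{j-1})^l(t_{j-1}-t_1)^{n+1-l}$ with the corresponding $\lambda_j^{(n)}$-term, which is precisely the missing $l=n+1$ summand $(t_j-t_{j-1})^{n+1}$. The binomial theorem applied to $\bigl((t_j-t_{j-1})+(t_{j-1}-t_1)\bigr)^{n+1}$, after discarding the $l=0$ term, yields
\begin{align*}
\sum_{l=1}^{n+1}\binom{n+1}{l}(t_j-t_{j-1})^l(t_{j-1}-t_1)^{n+1-l}=(t_j-t_1)^{n+1}-(t_{j-1}-t_1)^{n+1},
\end{align*}
and summing over $3 \le j \le m$ telescopes to $(t_m-t_1)^{n+1}-(t_2-t_1)^{n+1}$. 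Adding back the isolated $j=2$ term and dividing by $(t_m-t_1)^{n+1}$ gives $1$.

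For (\ref{lemlem2former}) I would run the induction on $n$ with base case $n=0$ being (\ref{lemlem2.1}). Assuming the claim for $n$, I would start from the right-hand side of the $n+1$ version, expand each $\psi_{n+1}f(t_j,t_{j-1})$ and each $\psi_{n+1-l}D_lf(t_{j-1},t_1)$ by the defining recursion (\ref{defpsi_jf2}), and use (\ref{propertyD_j}) to rewrite each $D_{n+1-l}D_lf$ as a binomial multiple of $D_{n+1}f$. The terms carrying a factor $\psi_n$ or $\psi_{n-l}$ reassemble, via the inductive hypothesis, into $\tfrac{1}{t_m-t_1}\psi_nf(t_m,t_1)$, while the terms proportional to $D_{n+1}f(t_1)$ collect into exactly the binomial-telescoping sum computed for (\ref{lemlem2later}), now with exponent $n+1$, leaving $-\tfrac{1}{t_m-t_1}D_{n+1}f(t_1)$. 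Recognizing $\tfrac{\psi_n f(t_m,t_1)-D_{n+1}f(t_1)}{t_m-t_1}=\psi_{n+1}f(t_m,t_1)$ by (\ref{defpsi_jf2}) closes the induction.

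The main obstacle is the bookkeeping in the inductive step: after applying (\ref{defpsi_jf2}) the various sums acquire different denominators $t_j-t_{j-1}$ and $t_{j-1}-t_1$ that must cancel cleanly, and one must correctly track the coefficients of $D_{n+1}f(t_1)$ across the reindexed sums so that they telescope. The algebraic identity underlying (\ref{lemlem2later}) is exactly the tool that resolves this collection, so the two parts of the lemma are genuinely intertwined rather than independent.
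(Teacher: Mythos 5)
Your proposal is correct and follows essentially the same route as the paper: the scalar identity is verified by absorbing the $j\ge 3$ part of the $\lambda$-sum as the $l=n+1$ binomial term and telescoping, and the functional identity is proved by induction on $n$ from the base case (\ref{lemlem2.1}), expanding the right-hand side of the $n+1$ statement via (\ref{defpsi_jf2}) and (\ref{propertyD_j}) and collecting the $D_{n+1}f(t_1)$ terms with the same telescoping binomial sum. No substantive differences.
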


\begin{proof}
We assume that $m \geq 3$ since it is clear for $m=2$. For (\ref{lemlem2later}), we see that
\begin{align*}
&\sum_{j=2}^m\left(\frac{t_j-t_{j-1}}{t_m-t_1}\right)^{n+1}+\sum_{l=1}^n\sum_{j=3}^m\binom{n+1}{l}\frac{(t_j-t_{j-1})^l(t_{j-1}-t_1)^{n+1-l}}{(t_m-t_1)^{n+1}} \\
=&\frac{1}{(t_m-t_1)^{n+1}}\sum_{j=3}^m\sum_{l=1}^{n+1}\binom{n+1}{l}(t_j-t_{j-1})^l(t_{j-1}-t_1)^{n+1-l}+\left(\frac{t_2-t_{1}}{t_m-t_1}\right)^{n+1} \\
=&\frac{1}{(t_m-t_1)^{n+1}}\sum_{j=3}^m\left\{(t_j-t_1)^{n+1}-(t_{j-1}-t_1)^{n+1}\right\}+\left(\frac{t_2-t_{1}}{t_m-t_1}\right)^{n+1} \\
=&\frac{1}{(t_m-t_1)^{n+1}}\left\{(t_m-t_1)^{n+1}-(t_2-t_1)^{n+1}\right\}+\left(\frac{t_2-t_{1}}{t_m-t_1}\right)^{n+1}=1.
\end{align*}
We prove (\ref{lemlem2former}) by induction on $n$. We already shown for $n=0$ in (\ref{lemlem2.1}). Let $n \geq 0$ and suppose that (\ref{lemlem2former}) holds for $n$. Then we have 
\begin{align*}
&\sum_{j=2}^m \lambda_j^{(n+1)}\psi_{n+1}f(t_j, t_{j-1})+\sum_{l=1}^{n+1}\sum_{j=3}^m \mu_{l, j}^{(n+1)}\psi_{n+1-l}D_lf(t_{j-1}, t_1) \\
=&\sum_{j=2}^m \left(\frac{t_j-t_{j-1}}{t_m-t_1}\right)^{n+2}\frac{\psi_nf(t_j, t_{j-1})-D_{n+1}f(t_{j-1})}{t_j-t_{j-1}} \\
&+\sum_{l=1}^{n}\sum_{j=3}^m \frac{(t_j-t_{j-1})^l(t_{j-1}-t_1)^{n+2-l}}{(t_m-t_1)^{n+2}}\cdot\frac{\psi_{n-l}D_lf(t_{j-1}, t_1)-D_{n+1-l}D_lf(t_1)}{t_{j-1}-t_1}  \\
&+\sum_{j=3}^m \frac{(t_j-t_{j-1})^{n+1}(t_{j-1}-t_1)}{(t_m-t_1)^{n+2}}\cdot\frac{D_{n+1}f(t_{j-1})-D_{n+1}f(t_{1})}{t_{j-1}-t_{1}} \\
=&\frac{1}{t_m-t_1}\left\{\sum_{j=2}^m \lambda_j^{(n)}\psi_nf(t_j, t_{j-1})+\sum_{l=1}^n\sum_{j=3}^m \mu_{l, j}^{(n)}\psi_{n-l}D_lf(t_{j-1}, t_1)\right\} \\
&-\sum_{l=1}^{n}\sum_{j=3}^m \binom{n+1}{l}\frac{(t_j-t_{j-1})^l(t_{j-1}-t_1)^{n+1-l}}{(t_m-t_1)^{n+2}}D_{n+1}f(t_1) \\
&-\sum_{j=3}^m \frac{(t_j-t_{j-1})^{n+1}}{(t_m-t_1)^{n+2}}D_{n+1}f(t_1)-\frac{(t_2-t_{1})^{n+1}}{(t_m-t_1)^{n+2}}D_{n+1}f(t_1) \\
=&\frac{\psi_nf(x, y)}{t_m-t_1}-\frac{1}{(t_m-t_1)^{n+2}}\sum_{j=3}^mD_{n+1}f(t_1)\sum_{l=1}^{n+1}\binom{n+1}{l}(t_j-t_{j-1})^l(t_{j-1}-t_1)^{n+1-l} \\
&-\frac{(t_2-t_{1})^{n+1}}{(t_m-t_1)^{n+2}}D_{n+1}f(t_1) \\
=&\frac{\psi_nf(x, y)}{t_m-t_1}-\frac{1}{(t_m-t_1)^{n+2}}\sum_{j=3}^m\left\{(t_j-t_1)^{n+1}-(t_{j-1}-t_1)^{n+1}\right\}D_{n+1}f(t_1) \\
&-\frac{(t_2-t_{1})^{n+1}}{(t_m-t_1)^{n+2}}D_{n+1}f(t_1) \\
=&\frac{\psi_nf(x, y)}{t_m-t_1}-\frac{1}{(t_m-t_1)^{n+2}}\left\{(t_m-t_1)^{n+1}-(t_2-t_1)^{n+1}\right\}D_{n+1}f(t_1) \\
&-\frac{(t_2-t_{1})^{n+1}}{(t_m-t_1)^{n+2}}D_{n+1}f(t_1) \\
=&\frac{\psi_nf(t_m, t_1)-D_{n+1}f(t_1)}{t_m-t_1}=\psi_{n+1}f(t_m, t_1).
\end{align*}
\end{proof}

We prove Theorem \ref{keylem2} in a similar way to the proof of Lemma \ref{keylem1}.

\begin{proof}[Proof of Theorem \ref{keylem2}]
We prove the assertion by induction on $n$. For $n=0$, we already proved Lemma \ref{keylem1}. Let $n >0$ and suppose that the assertions hold for $0, 1, \cdots, n-1$.

For the same reason as the proof of Lemma \ref{keylem1}, we may assume that $x, y \in \mathcal{R}_+$, $|x-a|<\delta$ and $|y-a|<\delta$. Set $z$ to be (\ref{defz}) (i.e. $z$ is the common initial part in the $\pi$-adic expansions of $x$ and $y$). By the definition of $z$, we see that $z \lhd x, z \lhd y$, $|z-a|<\delta$ and $|x-y|=\max\{|z-x|, |z-y|\}$. Since 
\begin{align*}
\psi_nf(x, y)=\left(\frac{x-z}{x-y}\right)^{n+1}\psi_nf(x, z)-\sum_{l=0}^n\left(\frac{y-z}{x-y}\right)^{n+1-l}\psi_{n-l}D_lf(y, z)
\end{align*}
by Lemma \ref{lemlem1} and
\begin{align*}
\left(\frac{x-z}{x-y}\right)^{n+1}-\sum_{l=0}^n\binom{n+1}{l}\left(\frac{y-z}{x-y}\right)^{n+1-l}=1,
\end{align*}
it follows that
\begin{align*}
&|\psi_nf(x, y)-c| \\
=&\left|\left(\frac{x-z}{x-y}\right)^{n+1}\psi_nf(x, z)-\sum_{l=0}^n\left(\frac{y-z}{x-y}\right)^{n+1-l}\psi_{n-l}D_lf(y, z) \right. \\
&\left.-\left\{\left(\frac{x-z}{x-y}\right)^{n+1}-\sum_{l=0}^n\binom{n+1}{l}\left(\frac{y-z}{x-y}\right)^{n+1-l}\right\}c\right| \\
\leq &\max_{0 \leq l \leq n}\left\{\left|\frac{x-z}{x-y}\right|^{n+1}|\psi_nf(x, z)-c|, \ \left|\frac{y-z}{x-y}\right|^{n+1-l}\left|\psi_{n-l}D_lf(y, z)-\binom{n+1}{l}c\right|\right\}.
\end{align*}
Let $1 \leq l \leq n$. For any $0 \leq i \leq n-l$ and any $r \in \mathcal{R}_+$ with $|r-a|<\delta$ and $|r_--a|<\delta$, we have
\begin{align*}
&\left|\psi_{n-l-i}D_iD_lf(r,r_-)-\binom{n+1-l}{i}\binom{n+1}{l}c\right| \\
=&\left|\binom{i+l}{l}\psi_{n-i-l}D_{i+l}f(r,r_-)-\binom{n+1-l}{i}\binom{n+1}{l}c\right| \\
=&\left|\binom{i+l}{l}\left\{\psi_{n-i-l}D_{i+l}f(r,r_-)-\binom{n+1}{i+l}c\right\}\right|<\varepsilon
\end{align*}
by the assumption. Hence, it follows from the induction hypothesis that 
\begin{align}\label{cons_of_hyp}
\left|\psi_{n-l}D_lf(y, z)-\binom{n+1}{l}c\right|<\varepsilon
\end{align}
for any distinct $y, z \in R$ with $|y-a|<\delta$ and $|z-a|<\delta$ and it suffices to show that $|\psi_nf(x, z)-c|<\varepsilon$ and $|\psi_nf(y, z)-c|<\varepsilon$. Thus, we may assume that $y \lhd x$ by replacing $z$ with $y$. Then there exists the unique sequence $t_1=y \lhd t_2 \lhd \cdots \lhd t_n=x$ in $\mathcal{R}$ such that $(t_j)_-=t_{j-1}$ for each $2 \leq j \leq n$ and $|t_j-a|<\delta$ for each $1 \leq j \leq n$. Put
\begin{align*}
\lambda_j^{(n)}=\left(\frac{t_j-t_{j-1}}{x-y}\right)^{n+1}, \ \mu_{l, j}^{(n)}=\frac{(t_j-t_{j-1})^l(t_{j-1}-y)^{n+1-l}}{(x-y)^{n+1}}
\end{align*}
for each $2 \leq j \leq m$ and $1 \leq l \leq n$. Then we see that $|\lambda_j^{(n)}|\leq 1$, $|\mu_{l, j}^{(n)}|\leq 1$, 
\begin{align*}
\psi_nf(x, y)=\sum_{j=2}^m \lambda_j^{(n)}\psi_nf(t_j, t_{j-1})+\sum_{l=1}^n\sum_{j=3}^m \mu_{l, j}^{(n)}\psi_{n-l}D_lf(t_{j-1}, y)
\end{align*}
and
\begin{align*}
\sum_{j=2}^m \lambda_j^{(n)}+\sum_{l=1}^n\sum_{j=3}^m \binom{n+1}{l}\mu_{l, j}^{(n)}=1
\end{align*}
by Lemma \ref{lemlem2}. Hence, we obtain 
\begin{align*}
&|\psi_nf(x, y)-c| \\
=&\left|\sum_{j=2}^m \lambda_j^{(n)}\psi_nf(t_j, t_{j-1})+\sum_{l=1}^n\sum_{j=3}^m \mu_{l, j}^{(n)}\psi_{n-l}D_lf(t_{j-1}, y)\right. \\
&\left.-\left\{\sum_{j=2}^m \lambda_j^{(n)}+\sum_{l=1}^n\sum_{j=3}^m \binom{n+1}{l}\mu_{l, j}^{(n)}\right\}c\right| \\
\leq&\max\left\{ \max_{2 \leq j \leq m}\left\{|\lambda_j^{(n)}||\psi_nf(t_j, t_{j-1})-c|\right\}, \ \max_{\substack{1 \leq l \leq n \\ 3 \leq j \leq m}}\left\{|\mu_{l, j}^{(n)}|\left|\psi_{n-l}D_lf(t_{j-1}, y)-\binom{n+1}{l}c\right|\right\}\right\} \\
<&\varepsilon
\end{align*}
by using (\ref{cons_of_hyp}) and the induction hypothesis.
\end{proof}

\begin{dfn}
Let $f \in C^n(R, K)$ and $1 \leq j \leq n+1$. We define the continuous function $\psi_{n, j}f : \bigtriangledown^2R \rightarrow K$ to be
\begin{align*}
\psi_{n, j}f(x, y) \coloneqq \Phi_{n+1}f(\underbrace{x, \cdots, x}_{j}, \underbrace{y, \cdots, y}_{n+2-j}).
\end{align*}
\end{dfn}

Note that $\psi_{n, 1}f(x, y)=\psi_nf(x, y)$. It is known that the following lemmas hold.

\begin{lem}[{\cite[Lemma 78.3]{Sc84}}]\label{lem78.3}
Let $n \geq 1$ and $f \in C^n(R, K)$. For any $1 \leq j \leq n$, we have
\begin{align*}
\psi_{n-j}D_jf(x, y)=\sum_{i=1}^{j+1}\binom{n+1-i}{n-j}\psi_{n, i}f(x, y).
\end{align*}
\end{lem}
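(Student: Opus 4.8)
The plan is to prove the identity by induction on $n$ for a fixed $j\geq 1$, with the engine being a Pascal-type two-term recursion for the ``two-valued'' quantities $\psi_{n,i}f$. Recall that for $g\in C^m$ one writes $\psi_{m,i}g(x,y)=\Phi_{m+1}g(\underbrace{x,\cdots,x}_{i},\underbrace{y,\cdots,y}_{m+2-i})$, so that $\psi_{m,0}g(x,y)=D_{m+1}g(y)$, $\psi_{m,m+2}g(x,y)=D_{m+1}g(x)$, and $\psi_{m,1}g=\psi_m g$. First I would record the recursion
\begin{align*}
\psi_{n,i}f(x,y)=\frac{\psi_{n-1,i}f(x,y)-\psi_{n-1,i-1}f(x,y)}{x-y}\qquad(1\leq i\leq n+1),
\end{align*}
which follows immediately from the defining recursion $(\ref{n-thquot})$ for $\Phi_{n+1}f$ together with the symmetry of $\Phi_{n+1}f$: one singles out one of the $x$'s and one of the $y$'s as the two active variables and deletes each in turn. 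All terms are well defined for $f\in C^n$, since $\Phi_{n+1}f$ extends to $R^{n+2}\setminus\Delta_{n+2}$ and $\Phi_n f$ to $R^{n+1}\setminus\Delta_{n+1}$.

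For the base case $n=j$ the claimed formula reads $\psi_0 D_j f=\sum_{i=1}^{j+1}\psi_{j,i}f$ because $\binom{j+1-i}{0}=1$. Summing the recursion over $i=1,\dots,j+1$ telescopes to $(\psi_{j-1,j+1}f-\psi_{j-1,0}f)/(x-y)=(D_jf(x)-D_jf(y))/(x-y)=\Phi_1(D_jf)(x,y)=\psi_0D_jf(x,y)$, as required. For the inductive step I assume the statement for $n-1$ (same $j\leq n-1$, applied to $f\in C^n\subset C^{n-1}$) and apply the recursion with $i=1$:
\begin{align*}
\psi_{n-j}D_jf=\frac{\psi_{n-j-1}D_jf-\psi_{n-j-1,0}D_jf}{x-y}.
\end{align*}
The first numerator term is rewritten by the induction hypothesis as $\sum_{i=1}^{j+1}\binom{n-i}{n-1-j}\psi_{n-1,i}f$, while $\psi_{n-j-1,0}D_jf=D_{n-j}D_jf(y)=\binom{n}{j}D_nf(y)$ by $(\ref{propertyD_j})$.

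The heart of the argument is then a summation by parts on $\sum_{i}\binom{n-i}{n-1-j}b_i$, where $b_i:=\psi_{n-1,i}f$, using $b_0=D_nf(y)$ and the rewriting $b_i-b_{i-1}=(x-y)\psi_{n,i}f$ furnished by the recursion. Pascal's identity $\binom{n+1-i}{n-j}-\binom{n-i}{n-j}=\binom{n-i}{n-j-1}$ causes the Abel-summed coefficients to collapse onto the target weights $\binom{n+1-i}{n-j}$, the top term $i=j+1$ surviving because $\binom{n-j}{n-j}=1$, and the boundary contribution $-\binom{n}{j}D_nf(y)$ cancelling exactly against the $\psi_{n-j-1,0}D_jf$ term via $\binom{n}{j}=\binom{n}{n-j}=\binom{(n+1)-1}{n-j}$. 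I expect the main obstacle to be precisely this bookkeeping: one must verify that after Abel summation and a single application of Pascal's rule the coefficient of each $\psi_{n,i}f$ is literally $\binom{n+1-i}{n-j}$, and that the two separate ``$D_nf(y)$'' boundary terms align. Since only the division by $x-y$ and integer binomial identities intervene, the argument is purely formal and inherits no additional constraint on $\operatorname{char}(K)$ beyond that already carried by $(\ref{propertyD_j})$.
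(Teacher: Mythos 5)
Your proof is correct. Note first that the paper itself does not prove this lemma: it is quoted verbatim from \cite[Lemma 78.3]{Sc84}, so there is no in-paper argument to compare against, and what you have supplied is a self-contained derivation. I checked the two ingredients. The two-term recursion $\psi_{n,i}f=\bigl(\psi_{n-1,i}f-\psi_{n-1,i-1}f\bigr)/(x-y)$ for $1\leq i\leq n+1$ does follow from (\ref{n-thquot}) after using the symmetry of $\Phi_{n+1}f$ to move one $x$ and one $y$ into the first two slots; the base case $n=j$ telescopes to $\bigl(D_jf(x)-D_jf(y)\bigr)/(x-y)=\psi_0D_jf(x,y)$ as you say; and in the inductive step, after writing $\psi_{n-j}D_jf=\bigl(\psi_{n-j-1}D_jf-\binom{n}{j}D_nf(y)\bigr)/(x-y)$ via (\ref{defpsi_jf2}) and (\ref{propertyD_j}), the Abel summation works out exactly: reindexing $\sum_{i=1}^{j+1}\binom{n+1-i}{n-j}(\psi_{n-1,i}f-\psi_{n-1,i-1}f)$ and applying Pascal's rule $\binom{n+1-i}{n-j}-\binom{n-i}{n-j}=\binom{n-i}{n-1-j}$ reproduces the induction hypothesis' coefficients for $1\leq i\leq j$, the $i=j+1$ coefficients agree (both equal $1$), and the $i=0$ boundary term is $-\binom{n}{n-j}\psi_{n-1,0}f=-\binom{n}{j}D_nf(y)$, cancelling the other boundary term. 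Since only integer binomial identities and division by $x-y$ are used, there is indeed no extra constraint on $\operatorname{char}(K)$. One small imprecision: for the terms $\psi_{n-1,0}f=D_nf(y)$ and $\psi_{n-1,n+1}f=D_nf(x)$ to make sense you need $\Phi_nf$ to extend to all of $R^{n+1}$, including the diagonal, not merely to $R^{n+1}\setminus\Delta_{n+1}$ as you wrote; this is harmless because $f\in C^n(R,K)$ gives exactly that extension, but it is worth stating correctly.
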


\begin{lem}[{\cite[Lemma 81.2]{Sc84}}]\label{lem81.2}
Let $n \geq 0$, $f \in C^n(R, K)$, $a \in R$, $c \in K$, and $\delta, \varepsilon >0$. Suppose that 
\begin{align*}
\left|\psi_{n, j}f(x, y)-c\right|<\varepsilon
\end{align*}
for any $1 \leq j \leq n+1$ and any distinct $x, y \in R$ with $|x-a|<\delta$ and $|y-a|<\delta$. Then we have
\begin{align*}
|\Phi_{n+1}f(x_1, \cdots, x_{n+2})-c|<\varepsilon
\end{align*}
for any $(x_1, \cdots, x_{n+2}) \in R^{n+2} \setminus \Delta_{n+2}$ where $|x_i-a|<\delta$ for each $1 \leq i \leq n+2$. (See (\ref{defdelta}) for the definition of $\Delta_{n+2}$.)
\end{lem}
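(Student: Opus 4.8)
The plan is to induct on the number $s$ of distinct values occurring among the coordinates $x_1,\dots,x_{n+2}$. Since $(x_1,\dots,x_{n+2})\notin\Delta_{n+2}$ we always have $s\geq2$, and every value that ever appears lies within $\delta$ of $a$, so the hypotheses and the induction hypothesis will apply to all configurations produced below. For the base case $s=2$, suppose the tuple takes exactly two distinct values $x$ and $y$, with $x$ occurring $j$ times. Because $\Phi_{n+1}f$ is symmetric (Remark \ref{rmkPhi}(1)),
\begin{align*}
\Phi_{n+1}f(x_1,\dots,x_{n+2})=\Phi_{n+1}f(\underbrace{x,\dots,x}_{j},\underbrace{y,\dots,y}_{n+2-j})=\psi_{n,j}f(x,y),
\end{align*}
and $1\leq j\leq n+1$ since both values occur, so the hypothesis gives $|\Phi_{n+1}f(x_1,\dots,x_{n+2})-c|<\varepsilon$.

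For the inductive step the key tool is the three–point splitting identity
\begin{align*}
\Phi_{n+1}f(x_1,x_2,T)=\frac{x_1-z}{x_1-x_2}\,\Phi_{n+1}f(x_1,z,T)+\frac{z-x_2}{x_1-x_2}\,\Phi_{n+1}f(z,x_2,T),
\end{align*}
where $T=(x_3,\dots,x_{n+2})$; denote it $(\star)$. It follows from the elementary first–difference identity (cf.\ (\ref{lemlem1.1})) applied to the auxiliary function $t\mapsto\Phi_nf(t,T)$, recognizing each first difference as the corresponding value of $\Phi_{n+1}f$ via the recursion (\ref{n-thquot}); it is an algebraic identity on pairwise distinct tuples and extends to $R^{n+2}\setminus\Delta_{n+2}$ by continuity. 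Now assume $s\geq3$, fix three of the distinct values, and label them $u,v,w$ so that $|u-v|$ is \emph{maximal} among the three pairwise distances. The ultrametric (isosceles) property then forces $|u-w|\leq|u-v|$ and $|w-v|\leq|u-v|$. Applying $(\star)$ with $x_1$ an occurrence of $u$, $x_2$ an occurrence of $v$, and $z=w$ rewrites $\Phi_{n+1}f$ of the current tuple as a combination of two terms, the first with one fewer occurrence of $v$ and the second with one fewer occurrence of $u$ (each replaced by $w$), whose coefficients $\tfrac{u-w}{u-v}$ and $\tfrac{w-v}{u-v}$ have absolute value $\leq1$ and sum to $1$.

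Writing $F(\alpha,\beta)$ for the value of $\Phi_{n+1}f$ on the configuration obtained from the original one by keeping $\alpha$ copies of $u$ and $\beta$ copies of $v$ and turning the remaining copies of $u,v$ into $w$, the above reads
\begin{align*}
F(\alpha,\beta)=\frac{u-w}{u-v}\,F(\alpha,\beta-1)+\frac{w-v}{u-v}\,F(\alpha-1,\beta)\qquad(\alpha,\beta\geq1).
\end{align*}
Unrolling this recursion, which terminates because $\alpha+\beta$ strictly decreases, expresses $\Phi_{n+1}f(x_1,\dots,x_{n+2})$ as a single combination of the boundary values $F(\alpha,0)$ and $F(0,\beta)$ with coefficients of absolute value $\leq1$ summing to $1$. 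Each boundary configuration has lost the value $v$ (resp.\ $u$) and hence involves at most $s-1$ distinct values, all still within $\delta$ of $a$, so by the induction hypothesis each such $\Phi_{n+1}f$ lies in the ball $\{\,|\,\cdot-c|<\varepsilon\,\}$. The ultrametric convexity of this ball (Remark \ref{rmkconvex}) then yields $|\Phi_{n+1}f(x_1,\dots,x_{n+2})-c|<\varepsilon$, completing the induction.

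The step I expect to be most delicate is the control of the coefficients: the reduction produces an honest non-Archimedean convex combination only because $u,v$ are chosen as a pair realizing the maximal distance among the three selected values, which is precisely what makes $|u-w|/|u-v|$ and $|w-v|/|u-v|$ at most $1$. One must also verify that no configuration appearing in the recursion degenerates to the diagonal, so that every $\Phi_{n+1}f$ written down is defined; this holds because $w$ (present with positive multiplicity throughout), together with any value outside $\{u,v,w\}$, occurs in all of them, while each boundary term retains at least one of $u$ or $v$.
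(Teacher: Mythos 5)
Your proof is correct, but there is nothing in the paper to compare it against: the authors state this lemma as a known result, quoting it from \cite[Lemma 81.2]{Sc84} with the phrase ``It is known that the following lemmas hold,'' and give no proof. What you have written is therefore a self-contained substitute for that citation, and it is very much in the spirit of the arguments the paper does carry out for Lemma \ref{keylem1}, Theorem \ref{keylem2} and Theorem \ref{keylem3}: express the quantity to be estimated as a finite sum $\sum_i\lambda_i\xi_i$ with $|\lambda_i|\leq 1$ and $\sum_i\lambda_i=1$, where each $\xi_i$ is already known to lie in the ball $\{\xi\in K : |\xi-c|<\varepsilon\}$, and conclude by ultrametric convexity (Remark \ref{rmkconvex}). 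The details check out. The base case $s=2$ correctly identifies two-valued configurations with $\psi_{n,j}f(x,y)$, $1\leq j\leq n+1$, via the symmetry of $\Phi_{n+1}f$. Your splitting identity is indeed (\ref{lemlem1.1}) applied to $t\mapsto\Phi_nf(t,T)$ combined with the recursion (\ref{n-thquot}), and it extends by continuity to off-diagonal tuples with repeated entries because each of the three configurations involved contains two distinct coordinates. Choosing $u,v$ to realize the maximal pairwise distance among $u,v,w$ makes both coefficients of absolute value at most $1$ (maximality alone suffices here; the isosceles property is not even needed), the unrolled coefficients are sums of products of such and hence still of absolute value at most $1$ while summing to $1$, and each terminal configuration loses one of $u,v$ and introduces no new points, so it has at most $s-1$ distinct values all within $\delta$ of $a$ and the induction closes. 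The two points that genuinely required care --- that the reduction yields an honest non-Archimedean convex combination, and that no configuration produced degenerates to $\Delta_{n+2}$ --- are both addressed explicitly and correctly.
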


We show the following theorem.

\begin{thm}\label{keylem3}
Let $n \geq 0$, $f \in C^n(R, K)$, $a \in R$, $c \in K$, and $\delta, \varepsilon >0$. Suppose that 
\begin{align*}
\left|\psi_{n-j}D_jf(r, r_-)-\binom{n+1}{j}c\right|<\varepsilon
\end{align*}
for any $0 \leq j \leq n$ and $r \in \mathcal{R}_+$ with $|r-a|<\delta$ and $|r_--a|<\delta$. Then we have 
\begin{align*}
\left|\psi_{n, j}f(x, y)-c\right|<\varepsilon
\end{align*}
for any $1 \leq j \leq n+1$ and any distinct $x, y \in R$ with $|x-a|<\delta$ and $|y-a|<\delta$.
\end{thm}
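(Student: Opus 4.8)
The plan is to deduce Theorem~\ref{keylem3} from Theorem~\ref{keylem2} by combining it with the linear relation of Lemma~\ref{lem78.3} between the quantities $\psi_{n,i}f$ and $\psi_{n-j}D_jf$, and then inverting a unit lower-triangular system with the help of the ultrametric inequality.

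First I would promote the hypothesis, which is stated only for consecutive pairs $(r,r_-)$, to arbitrary distinct pairs. Fix $0 \le j_0 \le n$ and apply Theorem~\ref{keylem2} to $g \coloneqq D_{j_0}f \in C^{n-j_0}(R,K)$ (allowed because $n-j_0 \le p-1$ when $\operatorname{char}(K)=p$) with the constant $c' \coloneqq \binom{n+1}{j_0}c$. Writing $D_i g = \binom{i+j_0}{j_0}D_{i+j_0}f$ via (\ref{propertyD_j}) and using the identity $\binom{i+j_0}{j_0}\binom{n+1}{i+j_0} = \binom{n+1}{j_0}\binom{n-j_0+1}{i}$, the assumed bound on $\psi_{n-(i+j_0)}D_{i+j_0}f(r,r_-)$, multiplied by the integer $\binom{i+j_0}{j_0}$ (of norm $\le 1$), is exactly the hypothesis of Theorem~\ref{keylem2} for $g$ with the constant $c'$. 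Hence Theorem~\ref{keylem2} yields
\[
\left|\psi_{n-j}D_jf(x,y) - \binom{n+1}{j}c\right| < \varepsilon
\]
for every $0 \le j \le n$ and every distinct $x,y$ with $|x-a|<\delta$, $|y-a|<\delta$; for $1\le j\le n$ this coincides with the intermediate estimate (\ref{cons_of_hyp}) appearing inside the proof of Theorem~\ref{keylem2}, while $j=0$ is its conclusion.

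Next I would fix such a pair $(x,y)$ and set $w_i \coloneqq \psi_{n,i}f(x,y)-c$ for $1 \le i \le n+1$ and $s_j \coloneqq \psi_{n-j}D_jf(x,y)-\binom{n+1}{j}c$ for $0 \le j \le n$, so that $|s_j|<\varepsilon$ by the first step. Lemma~\ref{lem78.3}, which also holds for $j=0$ (where it reduces to $\psi_nf=\psi_{n,1}f$), together with the hockey-stick identity $\sum_{i=1}^{j+1}\binom{n+1-i}{n-j}=\binom{n+1}{j}$, gives after subtracting $\binom{n+1}{j}c$ the system
\[
s_j = \sum_{i=1}^{j+1}\binom{n+1-i}{n-j}\,w_i, \qquad 0 \le j \le n.
\]
Since the top coefficient is $\binom{n-j}{n-j}=1$, this is unit lower-triangular, so $w_{j+1}=s_j-\sum_{i=1}^{j}\binom{n+1-i}{n-j}w_i$. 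Induction on $j$ and the fact that every binomial coefficient lies in $\mathbb{Z}\subset R$ (hence has norm $\le 1$) give $|w_{j+1}|\le\max\{|s_j|,\max_{1\le i\le j}|w_i|\}<\varepsilon$, which is the claim $|\psi_{n,i}f(x,y)-c|<\varepsilon$ for all $1 \le i \le n+1$.

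The only delicate part is the bookkeeping in the first step: verifying that feeding $D_{j_0}f$ with the rescaled constant $\binom{n+1}{j_0}c$ into Theorem~\ref{keylem2} reproduces exactly the required hypothesis, which rests on the two binomial identities above and on the assumption $n\le p-1$ keeping each invocation of Theorem~\ref{keylem2} within its valid range. Once the estimate of the first step is available for all distinct pairs, the triangular inversion in the second step is purely formal, with the ultrametric inequality absorbing all the integer coefficients.
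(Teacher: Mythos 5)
Your proof is correct and follows essentially the same route as the paper's: the paper likewise obtains the bound on $\psi_{n-j}D_jf(x,y)$ for arbitrary distinct pairs by feeding $D_jf$ into Theorem~\ref{keylem2} with the constant $\binom{n+1}{j}c$ (this is its estimate (\ref{cons_of_hyp}), verified with the same two binomial identities), and then carries out your unit lower-triangular inversion as an induction on $j$ using Lemma~\ref{lem78.3}, the identity $\binom{n+1}{j-1}-\sum_{i=1}^{j-1}\binom{n+1-i}{n+1-j}=1$, and the ultrametric inequality. The only cosmetic difference is that you invoke the already-proven Theorem~\ref{keylem2} as a black box applied to $D_{j_0}f$, whereas the paper reuses the intermediate estimate established inside that theorem's proof.
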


\begin{proof}
We prove the assertion by induction on $j$. We already proved the assertion for $j=1$ in Theorem \ref{keylem2}. Let $1<j \leq n+1$ and suppose that the assertions hold for $1, \cdots, j-1$. Then we have
\begin{align*}
&\left|\psi_{n, j}f(x, y)-c\right| \\
=&\left|\psi_{n+1-j}D_{j-1}f(x, y)-\sum_{i=1}^{j-1}\binom{n+1-i}{n+1-j}\psi_{n, i}f(x, y)-\left\{\binom{n+1}{j-1}-\sum_{i=1}^{j-1}\binom{n+1-i}{n+1-j}\right\}c\right| \\
\leq & \max_{1 \leq i \leq j-1}\left\{\left|\psi_{n+1-j}D_{j-1}f(x, y)-\binom{n+1}{j-1}c\right|, \left|\binom{n+1-i}{n+1-j}\right|\left|\psi_{n, i}f(x, y)-c\right|\right\}
\end{align*}
for any distinct $x, y \in R$ with $|x-a|<\delta$ and $|y-a|<\delta$. Here, we used Lemma \ref{lem78.3} and 
\begin{align*}
\binom{n+1}{j-1}-\sum_{i=1}^{j-1}\binom{n+1-i}{n+1-j}=1
\end{align*}
in the first equality. We obtain $|\psi_{n, j}f(x, y)-c|<\varepsilon$ by using (\ref{cons_of_hyp}) and the induction hypothesis.
\end{proof}

It is clear that the following corollary follows form Lemma \ref{lem81.2} and Theorem \ref{keylem3}.

\begin{cor}\label{keycor}
Let $n \geq 0$, $f \in C^n(R, K)$, $a \in R$, $c \in K$, and $\delta, \varepsilon >0$. Suppose that 
\begin{align*}
\left|\psi_{n-j}D_jf(r, r_-)-\binom{n+1}{j}c\right|<\varepsilon
\end{align*}
for any $0 \leq j \leq n$ and $r \in \mathcal{R}_+$ with $|r-a|<\delta$ and $|r_--a|<\delta$. Then we have
\begin{align*}
|\Phi_{n+1}f(x_1, \cdots, x_{n+2})-c|<\varepsilon
\end{align*}
for any $(x_1, \cdots, x_{n+2}) \in R^{n+2} \setminus \Delta_{n+2}$ where $|x_i-a|<\delta$ for each $1 \leq i \leq n+2$.
\end{cor}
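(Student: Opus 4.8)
The plan is to derive the conclusion by simply chaining the two immediately preceding results, since the hypothesis of this corollary is word-for-word the hypothesis of Theorem \ref{keylem3}. First I would invoke Theorem \ref{keylem3} with the same data $n, f, a, c, \delta, \varepsilon$: under the standing assumption that
\begin{align*}
\left|\psi_{n-j}D_jf(r, r_-)-\binom{n+1}{j}c\right|<\varepsilon
\end{align*}
holds for all $0 \leq j \leq n$ and all $r \in \mathcal{R}_+$ with $|r-a|<\delta$ and $|r_--a|<\delta$, that theorem produces the intermediate bound
\begin{align*}
\left|\psi_{n, j}f(x, y)-c\right|<\varepsilon
\end{align*}
for every $1 \leq j \leq n+1$ and every pair of distinct $x, y \in R$ with $|x-a|<\delta$ and $|y-a|<\delta$. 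This converts control of the two-variable objects $\psi_{n-j}D_jf$ at the dyadic-type pairs $(r, r_-)$ into control of all the partially-collapsed difference quotients $\psi_{n,j}f$ on arbitrary nearby pairs.

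Second I would feed this intermediate bound directly into Lemma \ref{lem81.2}. The conclusion just obtained is exactly the hypothesis required by that lemma, so it immediately returns
\begin{align*}
|\Phi_{n+1}f(x_1, \cdots, x_{n+2})-c|<\varepsilon
\end{align*}
for any $(x_1, \cdots, x_{n+2}) \in R^{n+2} \setminus \Delta_{n+2}$ with $|x_i-a|<\delta$ for each $1 \leq i \leq n+2$, which is the desired statement. Thus the entire argument is the composition $\text{Theorem \ref{keylem3}} \Rightarrow \text{Lemma \ref{lem81.2}}$, with no new estimate to establish.

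There is essentially no obstacle remaining at this stage: all the genuine work has already been carried out upstream. The substance lives in Theorem \ref{keylem3}, whose proof runs by induction on $j$ using the expansion identity of Lemma \ref{lemlem1}, the coefficient identity and partition-of-unity relation of Lemma \ref{lemlem2}, and the base case Theorem \ref{keylem2} (itself the inductive generalization of the key Lemma \ref{keylem1}); the passage from two-variable to $(n+2)$-variable quantities is handled by Lemma \ref{lem81.2}, quoted from \cite{Sc84}. Since both inputs are already in hand, the corollary follows at once, which is why it can be recorded as an immediate consequence.
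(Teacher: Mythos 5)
Your proof is correct and is exactly the paper's argument: the authors record this corollary as an immediate consequence of Theorem \ref{keylem3} followed by Lemma \ref{lem81.2}, precisely the composition you describe. Nothing further is needed.
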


\subsection{Proof of Theorem \ref{main} $(3)$ and $(4)$}

We show Theorem \ref{main} $(3)$ and $(4)$.

\begin{proof}[Proof of Theorem \ref{main} (3)]
Suppose that $f \in C^{n+1}(R, K)$. Since $D_jf \in C^{n+1-j}(R, K)$ for each $0 \leq j \leq n$, we have
\begin{align*}
D_jf(x)=D_jf(y)+\sum_{l=1}^{n-j}(x-y)^lD_lD_jf(y)+(x-y)^{n+1-j}\Phi_{n+1-j}D_jf(x, y, \cdots, y)
\end{align*}
for any $x, y \in R$ by \cite[Theorem 29.3]{Sc84}. Hence, by (\ref{defpsi_jf1}) and (\ref{propertyD_j}), we obtain 
\begin{align*}
\lim_{(x, y) \to (a, a)} \psi_{n-j}D_jf(x, y)&=\lim_{(x, y) \to (a, a)}\Phi_{n+1-j}D_jf(x, y,\cdots, y) \\
&=D_{n+1-j}D_jf(a)=\binom{n+1}{j}D_{n+1}f(a)
\end{align*}
for any $a \in R$. We see that for any $\varepsilon>0$ there exists $\delta_0>0$ such that 
\begin{align*}
\left|\psi_{n-j}D_jf(r, r_-)-\binom{n+1}{j}D_{n+1}f(a)\right|<\varepsilon
\end{align*}
for any $r \in \mathcal{R}_+$ with $0<|r-a|<\delta_0$. (Compare (\ref{conclusion}).) Thus, the limits
\begin{align*}
\lim_{\substack{r \to a \\ a \neq r \in \mathcal{R}_+}}b_r^{n, j}(f)\gamma_r^{-1}=\lim_{\substack{r \to a \\ a \neq r \in \mathcal{R}_+}}\psi_{n-j}D_jf(x, y)=\binom{n+1}{j}D_{n+1}f(a)
\end{align*}
exist. Since 
\begin{align*}
\lim_{\substack{r \to a \\ a \neq r \in \mathcal{R}_+}}b_r^{n, 0}(f)\gamma_r^{-1}=D_{n+1}f(a),
\end{align*}
we find that
\begin{align*}
\lim_{\substack{r \to a \\ a \neq r \in \mathcal{R}_+}}b_r^{n, j}(f)\gamma_r^{-1}=\binom{n+1}{j}\lim_{\substack{r \to a \\ a \neq r \in \mathcal{R}_+}}b_r^{n, 0}(f)\gamma_r^{-1}
\end{align*}
for each $0 \leq j \leq n$.

Conversely, we suppose that the limits $\lim_{\substack{r \to a \\ a \neq r \in \mathcal{R}_+}}b_r^{n, j}(f)\gamma_r^{-1}$ exist for any $a \in R$ and $0 \leq j \leq n$ and 
\begin{align*}
\lim_{\substack{r \to a \\ a \neq r \in \mathcal{R}_+}}b_r^{n, j}(f)\gamma_r^{-1}=\binom{n+1}{j}\lim_{\substack{r \to a \\ a \neq r \in \mathcal{R}_+}}b_r^{n, 0}(f)\gamma_r^{-1}
\end{align*}
holds for each $0 \leq j \leq n$. Put $g(a) \coloneqq \lim_{\substack{r \to a \\ a \neq r \in \mathcal{R}_+}}b_r^{n, 0}(f)\gamma_r^{-1}$. Then, we find that for any $\varepsilon>0$ there exists $\delta_0>0$ such that 
\begin{align*}
\left|\psi_{n-j}D_jf(r, r_-)-\binom{n+1}{j}g(a)\right|<\varepsilon
\end{align*}
for any $0 \leq j \leq n$ and any $r \in \mathcal{R}_+$ with $|x-a|<\delta_0$ and $|y-a|<\delta_0$. (Compare (\ref{conclusion2}) and (\ref{conclusion3}).) Hence, Corollary \ref{keycor} implies that 
\begin{align*}
|\Phi_{n+1}f(x_1, \cdots, x_{n+2})-g(a)|<\varepsilon
\end{align*}
for any $(x_1, \cdots, x_{n+2}) \in R^{n+2} \setminus \Delta_{n+2}$ with $|x_i-a|<\delta_0$ for each $1 \leq i \leq n+2$. (We defined $\Delta_{n+2}$  in (\ref{defdelta}).) In other words, it follows that 
\begin{align*}
\lim_{\substack{(x_1, \cdots, x_{n+1}, x_{n+2}) \to (a, \cdots, a) \\ (x_1, \cdots, x_{n+1}, x_{n+2}) \in \bigtriangledown^{n+2}R}} \Phi_{n+2}f(x_1, \cdots, x_{n+1}, x_{n+2})=g(a)
\end{align*}
and $f \in C^{n+1}(R, K)$.
\end{proof}

\begin{proof}[Proof of Theorem \ref{main} (4)]
We give a proof by a similar argument to the proof of Corollary \ref{BanachC^1}. Let $\{f_m\}_{m \geq 1}$ be a Cauchy sequence in $(C^{n+1}(R, K), |\cdot|_{n+1})$ and put $b_r^{n, j}(f)\coloneqq \lim_{m \to \infty}b_r^{n, j}(f_m)$ and $f \coloneqq \sum_{r \in \mathcal{R}}\sum_{j=0}^nb_r^{n, j}(f)\gamma_r^{n-j}(x-r)^j\chi_r$ for $0 \leq j \leq n$. We show that $f \in C^{n+1}(R, K)$ and $\lim_{m \to \infty}|f-f_m|_{n+1}=0$. Let $a \in R$ and $S=\mathcal{R}_+ \setminus \{a\}$. Define $g_m^{n, j} :S \rightarrow K$ to be $g_m^{n, j}(r)=b_r^{n, j}(f_m)\gamma_r^{-1}$ for $m \geq 1$ and $0 \leq j \leq n$. By the same reason in the proof of Corollary \ref{BanachC^1}, we see that the limit
\begin{align*}
\lim_{\substack{r \to a \\ a \neq r \in \mathcal{R}_+}}b_r^{n, j}(f)\gamma_r^{-1}=\lim_{m \to \infty}\lim_{\substack{r \to a \\ a \neq r \in \mathcal{R}_+}}g_m^{n, j}(r)
\end{align*}
exists for each $0 \leq j \leq n$ and satisfies 
\begin{align*}
\binom{n+1}{j}\lim_{\substack{r \to a \\ a \neq r \in \mathcal{R}_+}}b_r^{n, 0}(f)\gamma_r^{-1}&=\lim_{m \to \infty} \binom{n+1}{j}\lim_{\substack{r \to a \\ a \neq r \in \mathcal{R}_+}}g_m^{n, 0}(r) \\
&=\lim_{m \to \infty}\lim_{\substack{r \to a \\ a \neq r \in \mathcal{R}_+}}g_m^{n, j}(r)=\lim_{\substack{r \to a \\ a \neq r \in \mathcal{R}_+}}b_r^{n, j}(f)\gamma_r^{-1}.
\end{align*}
Since 
\begin{align*}
|f-f_m|_{n+1}=\sup_{r \in \mathcal{R}}\{|g_m^{n, 0}(r)-b_r^{n, 0}(f)\gamma_r^{-1}|,\cdots , |g_m^{n, n}(r)-b_r^{n, n}(f)\gamma_r^{-1}|\}<\varepsilon
\end{align*}
for sufficiently large $m \in \mathbb{Z}_{>0}$, we conclude the proof.
\end{proof}

\begin{cor}
Let $n \geq 1$ and $f=\sum_{r \in \mathcal{R}}b_r(f)\chi_r \in C(R, K)$. The following conditions are equivalent.
\begin{enumerate}
\item $f \in C^n(R, K)$ and $f'=0$.
\item $\lim_{r \in \mathcal{R}_+} b_r(f)\gamma_r^{-n}=0$.
\end{enumerate}
\end{cor}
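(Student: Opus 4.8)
The plan is to treat the two implications separately, the forward one being immediate and the content lying entirely in the converse. The implication $(1)\Rightarrow(2)$ is nothing but Lemma \ref{half_corofmain}, so no further work is needed there.

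For $(2)\Rightarrow(1)$, I would first reorganize the wavelet expansion of $f$ into the orthonormal basis of $C^n(R,K)$. Writing $c_r:=b_r(f)\gamma_r^{-n}$ (so that $c_0=f(0)$, since $\gamma_0=1$), the hypothesis $\lim_{r\in\mathcal R_+}b_r(f)\gamma_r^{-n}=0$ says precisely that $c_r\to0$ in the sense of (\ref{lim_r}). Since $\{\gamma_r^{n-j}(x-r)^j\chi_r\mid r\in\mathcal R,\,0\le j\le n\}$ is an orthonormal basis of the $K$-Banach space $(C^n(R,K),|\cdot|_n)$ by Theorem \ref{main}(1) and (4), this is exactly the criterion for the series $\sum_{r\in\mathcal R}c_r\gamma_r^n\chi_r$ to converge in $(C^n(R,K),|\cdot|_n)$; denote its sum by $g\in C^n(R,K)$.

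The next step, which I expect to be the main obstacle, is to identify $g$ with the original function $f$, that is, to pass between the sup-norm (wavelet) expansion and the $|\cdot|_n$-expansion. The key is the norm inequality $|h|_{\sup}\le|h|_n$ for every $h\in C^n(R,K)$: expanding $h=\sum_{r}\sum_{j=0}^n b_r^{n,j}(h)\gamma_r^{n-j}(x-r)^j\chi_r$, orthonormality gives $|h|_n=\sup_{r,j}|b_r^{n,j}(h)|$, while $|\gamma_r|\le1$ and $|x-r|\le1$ on $D_r$ force each basis function to have supremum norm at most $1$; the ultrametric inequality then yields $|h(x)|\le|h|_n$ for all $x$. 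Granting this, the finite partial sums $g_N=\sum_{r\in F_N}c_r\gamma_r^n\chi_r=\sum_{r\in F_N}b_r(f)\chi_r$, taken over an increasing exhaustion $F_N\uparrow\mathcal R$ by finite subsets, converge to $g$ in $|\cdot|_n$, hence also in $|\cdot|_{\sup}$; but by the wavelet expansion of $f$ they converge to $f$ in $|\cdot|_{\sup}$. Therefore $g=f$, and in particular $f\in C^n(R,K)$.

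Finally, to obtain $f'=0$, I would read off the $C^n$-coefficients of $f=g$: by uniqueness of the orthonormal expansion, $b_r^{n,0}(f)=c_r$ and $b_r^{n,j}(f)=0$ for all $r\in\mathcal R$ and $1\le j\le n$. Plugging this into the identity $b_r^{n,j}(f)=j^{-1}b_r^{n-1,j-1}(f')$ established in the proof of Theorem \ref{main}(2) (this is where the standing hypothesis $n\le p-1$ in characteristic $p$ is used, to invert $j$) shows that $b_r^{n-1,i}(f')=0$ for every $r\in\mathcal R$ and $0\le i\le n-1$. Thus every coefficient in the $C^{n-1}$-expansion of $f'$ vanishes, so $f'=0$, which completes the argument.
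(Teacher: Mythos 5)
Your proposal is correct, but it takes a genuinely different route from the paper's for the implication $(2)\Rightarrow(1)$. The paper bootstraps: from $|b_r(f)\gamma_r^{-1}|\leq|b_r(f)\gamma_r^{-n}|$ it first obtains $f\in C^1(R,K)$ and $f'=0$ via Theorem \ref{gN^1}, and then climbs one level at a time, using for each $1\leq k\leq n-1$ that $|b_r(f)\gamma_r^{-k-1}|\to 0$ makes the sum $\sum_{r}b_r(f)\gamma_r^{-k-1}\cdot\gamma_r^{k+1}\chi_r$ converge in the Banach space $(C^{k+1}(R,K),|\cdot|_{k+1})$, whence $f\in C^{k+1}(R,K)$; in particular $f'=0$ comes for free at the first step and never has to be revisited. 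You instead jump directly to $C^n$ with a single application of completeness and then recover $f'=0$ from the vanishing of $b_r^{n,j}(f)$ for $j\geq1$ together with the identity $b_r^{n,j}(f)=j^{-1}b_r^{n-1,j-1}(f')$ --- a step the paper's argument does not need. Both proofs must identify the abstract limit of the partial sums $\sum_{r\in F_N}b_r(f)\chi_r$ with the function $f$ itself; the paper leaves this implicit, while you address it through the inequality $|h|_{\sup}\leq|h|_n$. That inequality is true, but your justification (evaluating the orthonormal expansion of $h$ pointwise) tacitly assumes that the expansion converges to $h$ uniformly and not merely in $|\cdot|_n$, which is close to circular; the clean way to get it is the chain $|h|_{\sup}\leq|h|_1\leq\cdots\leq|h|_n$, where each step follows from the orthonormality statement of Theorem \ref{main}(1) at the previous level together with $|\gamma_r^{-1}|\geq1$ (essentially Corollary \ref{normineq} iterated down to the sup norm). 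With that small repair your argument is complete, and it trades the paper's $n-1$ successive completeness arguments for one, at the cost of needing the norm comparison and the coefficient relation for $f'$ explicitly.
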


\begin{proof}
The condition (1) implies the condition (2) by Lemma \ref{half_corofmain}. To prove the converse, we suppose that $\lim_{r \in \mathcal{R}_+} b_r(f)\gamma_r^{-n}=0$. This means that for any $\varepsilon>0$ there exists a finite subset $S_{\varepsilon} \subset \mathcal{R}_+$ such that $|b_r(f)\gamma_r^{-n}|<\varepsilon$ for any $r \in \mathcal{R}_+ \setminus S_{\varepsilon}$. Since
\begin{align*}
|b_r(f)\gamma_r^{-1}|=|b_r(f)\gamma_r^{-n}\cdot \gamma_r^{n-1}|<q^{-(n-1)(l(r)-1)}\varepsilon \leq \varepsilon
\end{align*}
for any $r \in \mathcal{R}_+ \setminus S_{\varepsilon}$, it follows from Theorem \ref{gN^1} that $f \in C^1(R, K)$ and $f'=0$. Let $1 \leq k \leq n-1$ and suppose that $f \in C^k(R,K)$ and $f'=0$. Since 
\begin{align*}
|b_r(f)\gamma_r^{-k-1}|=|b_r(f)\gamma_r^{-n}\cdot \gamma_r^{n-k-1}|<q^{-(n-k-1)(l(r)-1)}\varepsilon\leq\varepsilon
\end{align*}
for any $r \in \mathcal{R}_+ \setminus S_{\varepsilon}$, the infinite sum
\begin{align*}
f=\sum_{r \in \mathcal{R}}b_r(f)\chi_r=\sum_{r \in \mathcal{R}}\frac{b_r(f)}{\gamma_r^{k+1}}\gamma_r^{k+1}\chi_r
\end{align*}
converges in the $K$-Banach space $(C^{k+1}(R, K), |\cdot|_{k+1})$. Thus, we see that $f \in C^{k+1}(R, K)$.
\end{proof}

%この線の間の部分は安藤が書くところ--------------------------------------------------------------------------------------
\section{Norms on $C^n(R, K)$ and $n$-th Lipschitz functions}

The main purpose of this section is to prove Theorem \ref{samenorm} and Theorem \ref{charn-lip}. See Definition \ref{defn-lip} for the $n$-th Lipschitz functions.

\subsection{$n$-th Lipschitz functions}

\begin{lem}\label{inclu}
Let $n\geq1$.
\begin{enumerate}
\item A $C^{n}$-function is an $n$-th Lipschitz function.

\item An $n$-th Lipschitz function is a $C^{n-1}$-function. 
\end{enumerate}
\end{lem}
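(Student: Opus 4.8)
The plan is to prove the two inclusions separately; assertion (1) is a one-line compactness argument, while (2) carries the real content. \textbf{For (1),} recall that $R$ is compact, being the ring of integers of a local field, and hence so is $R^{n+1}$. If $f\in C^n(R,K)$ then, by definition, $\Phi_nf$ extends to a continuous function on $R^{n+1}$; a continuous $K$-valued function on a compact space is bounded, so $|\Phi_nf|_{\sup}<\infty$. Since this supremum dominates the one over $\bigtriangledown^{n+1}R$ and $f\in C^n(R,K)\subset C(R,K)$, we get $f\in Lip_n(R,K)$.

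\textbf{For (2),} the case $n=1$ is immediate because $Lip_1(R,K)\subset C(R,K)=C^0(R,K)$ by definition, so assume $n\geq2$ and let $f\in Lip_n(R,K)$ with Lipschitz constant $A_f$. By Remark \ref{rmkPhi}(2), applied with $n-1$ in place of $n$ (legitimate since $n-1\geq1$), it suffices to show that for each $a\in R$ the diagonal limit of $\Phi_{n-1}f(x_1,\dots,x_n)$ as $(x_1,\dots,x_n)\to(a,\dots,a)$ within $\bigtriangledown^nR$ exists. As $K$ is complete, I would establish this through a Cauchy criterion, showing that the oscillation of $\Phi_{n-1}f$ on the set of tuples in $\bigtriangledown^nR$ all of whose coordinates satisfy $|x_i-a|<\delta$ is at most $\delta A_f$, which tends to $0$ as $\delta\to0$.

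The basic estimate comes from the recursion (\ref{n-thquot}) together with the symmetry of $\Phi_nf$ (Remark \ref{rmkPhi}(1)): for pairwise distinct $u,v,w_1,\dots,w_{n-1}$,
\[
\Phi_{n-1}f(u,w_1,\dots,w_{n-1})-\Phi_{n-1}f(v,w_1,\dots,w_{n-1})=(u-v)\,\Phi_nf(u,v,w_1,\dots,w_{n-1}),
\]
so the left-hand side has absolute value at most $|u-v|\,A_f$; that is, $\Phi_{n-1}f$ is Lipschitz with constant $A_f$ in any single argument, provided all $n$ arguments stay pairwise distinct. To compare two tuples $\mathbf{x},\mathbf{y}\in\bigtriangledown^nR$ whose coordinates all lie in $\{x\in R\mid|x-a|<\delta\}$, I would change coordinates one at a time; the obstacle is that an intermediate tuple can drop out of $\bigtriangledown^nR$ when a freshly inserted coordinate collides with a surviving one.

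The step resolving this obstacle exploits that the ball $\{x\in R\mid|x-a|<\delta\}$ is infinite: choose an auxiliary tuple $\mathbf{z}=(z_1,\dots,z_n)$ inside it so that the coordinates of $\mathbf{x}$ together with those of $\mathbf{z}$ are pairwise distinct, and likewise those of $\mathbf{y}$ together with $\mathbf{z}$; this is possible because each $z_i$ need only avoid finitely many already-chosen points of an infinite set. Then the chain $\mathbf{x}\to(z_1,x_2,\dots,x_n)\to\cdots\to\mathbf{z}$ stays in $\bigtriangledown^nR$ at every stage, the one-variable estimate applies at each step, and the strong triangle inequality yields $|\Phi_{n-1}f(\mathbf{x})-\Phi_{n-1}f(\mathbf{z})|\leq\max_k|x_k-z_k|\,A_f\leq\delta A_f$, using $|x_k-z_k|\leq\max(|x_k-a|,|z_k-a|)<\delta$. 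The same bound holds between $\mathbf{y}$ and $\mathbf{z}$, and one further application of the ultrametric inequality gives $|\Phi_{n-1}f(\mathbf{x})-\Phi_{n-1}f(\mathbf{y})|\leq\delta A_f$, the required oscillation bound. I expect this collision-avoidance step to be the only genuinely delicate point; it is rendered painless by the fact that non-Archimedean balls of positive radius are infinite, so there is always room to insert a generic intermediate tuple.
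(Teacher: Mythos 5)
Your proof is correct and follows essentially the same route as the paper: part (1) is the identical compactness argument, and part (2) rests on the same one-coordinate-at-a-time telescoping estimate $|\Phi_{n-1}f(\mathbf{x})-\Phi_{n-1}f(\mathbf{y})|\leq A_f\max_j|x_j-y_j|$ obtained from the recursion (\ref{n-thquot}) and the symmetry of the difference quotients. The only difference is one of packaging: the paper telescopes directly between two tuples whose $2n$ coordinates are all pairwise distinct and then extends $\Phi_{n-1}f$ to $R^n$ by uniform continuity and density, whereas you route through an auxiliary generic tuple and conclude via the diagonal-limit criterion of Remark \ref{rmkPhi}(2) --- your collision-avoidance step is in fact a slightly more careful treatment of a point the paper passes over.
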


\begin{proof}

1. If $f$ is a $C^n$-function, then $\Phi_nf$ can be extended to a continuous function on $R^{n+1}$. Then $|\Phi_nf|$ is bounded by the compactness of $R^{n+1}$. Thus, $f$ is an $n$-th Lipschitz function.

2. If $f$ is an $n$-th Lipschitz function and $x_1, x_2, \cdots, x_{2n}$ are pairwise distinct, then
\begin{align*}
&\left|\Phi_{n-1}f(x_1,\cdots,x_{n})-\Phi_{n-1}f(x_{n+1},\cdots,x_{2n})\right|\\
=&\left|\sum_{j=1}^{n}\left(\Phi_{n-1}f(x_{n+1},\cdots x_{n+j-1},x_{j},\cdots,x_n)-\Phi_{n-1}(x_{n+1},\cdots,x_{n+j},x_{j+1},\cdots,x_n)\right)\right|\\
\leq&\max_{1\leq j\leq n}\left\{\left|\Phi_{n-1}f(x_{n+1},\cdots x_{n+j-1},x_{j},\cdots,x_n)-\Phi_{n-1}(x_{n+1},\cdots,x_{n+j},x_{j+1},\cdots,x_n)\right|\right\}\\
=&\max_{1\leq j\leq n}\left\{\left|\Phi_nf(x_{n+1},\cdots,x_{n+j},x_{j},\cdots,x_{n})\right|\cdot\left|x_j-x_{n+j}\right|\right\}\\
\leq&A_f\max_{1\leq j\leq n}\left\{\left|x_j-x_{n+j}\right|\right\}.
\end{align*}
Here we used (\ref{n-thquot}) in the second equality and the definition of $A_f$ (see Definition \ref{defn-lip}) in the second inequality. Therefore $\Phi_{n-1}f$ is uniformly continuous on $\bigtriangledown^{n}R$ (see (\ref{btd}) for the definition of $\bigtriangledown^{n}R$) and $f$ can be extended to a continuous function on $R^n$. Hence, $f$ is a $C^{n-1}$-function.
\end{proof}

By Lemma \ref{inclu}, we can expand $f$ like (\ref{expC^{n-1}})
as a $C^{n-1}$-function. In the following, we give a proof of Theorem \ref{charn-lip} and show that $Lip_n(R,K)$ is a $K$-Banach space.

\begin{proof}[Proof of Theorem \ref{charn-lip}]
Assume that $f$ is an $n$-th Lipschitz function. Note that $b_r^{n-1,j}(f)\gamma_r^{-1}=\psi_{n-1-j}D_jf(r,r_-)$ for all $r\in\mathcal{R}_{+}$, $0\leq j\leq n-1$ by Theorem \ref{main}. 
Therefore, we have
\begin{align*}
\left|b_r^{n-1,0}(f)\gamma_r^{-1}\right|
=\left|\psi_{n-1}D_0f(r,r_-)\right
|=\left|\Phi_{n}f(r,r_-,\cdots,r_-)\right|
\leq A_f
\end{align*}
and, for $1\leq j\leq n-1$,
\begin{align*}
\left|b_r^{n-1,j}(f)\gamma_r^{-1}\right|
=\left|\psi_{n-1-j}D_jf(r,r_-)\right|
\leq\max_{1\leq i\leq j+1}\left\{\left|\psi_{n-1,i}f(r,r_-)\right|\right\}
\leq A_f
\end{align*}
by Lemma \ref{lem78.3}.
It follows that
\begin{align*}
\sup_{\substack{r\in\mathcal{R}_{+}\\0\leq j\leq n-1}}\left\{\left|b_r^{n-1,j}(f)\gamma_r^{-1}\right|\right\}<\infty.
\end{align*}
To show the converse, we apply Corollary \ref{keycor} with $\varepsilon=\sup_{\substack{r\in\mathcal{R}_{+}\\0\leq j\leq n-1}}\{|b_r^{n-1,j}(f)\gamma_r^{-1}|\}$, $\delta>1$ and $a=c=0$.
\end{proof}

\begin{rmk}
Let $f$ be an $n$-th Lipschitz function. For expansion (\ref{expC^{n-1}})
\begin{align*}
\left|f\right|_{Lip_n}=\sup_{\substack{r\in\mathcal{R}\\0\leq j\leq n-1}}\left\{\left|b_r^{n-1,j}(f)\gamma_r^{-1}\right|\right\}
\end{align*}
is a norm of $Lip_n(R,K)$. We also denote it by $|f|_n$. (See (\ref{def of norm}).)
\end{rmk}

The following proof is adopted from {\cite[Corollary 3.2]{dS16}}. 

\begin{prop}\label{BanachLip_n}
Let $n\geq1$. $Lip_n(R,K)$ is a $K$-Banach space with respect to the norm $|\cdot|_n$.
\end{prop}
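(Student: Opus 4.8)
The plan is to mimic the completeness arguments of Corollary \ref{BanachC^1} and Theorem \ref{main}(4), but to exploit the characterization in Theorem \ref{charn-lip} so that membership of the limit function in $Lip_n(R,K)$ reduces to a mere convergence statement in the Banach space $C^{n-1}(R,K)$, rather than to a Moore--Osgood interchange of limits as in Lemma \ref{MOthm}.

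First I would fix a Cauchy sequence $\{f_m\}_{m\geq1}$ in $(Lip_n(R,K),|\cdot|_n)$. By Lemma \ref{inclu} each $f_m$ lies in $C^{n-1}(R,K)$ and so has the expansion $(\ref{expC^{n-1}})$ with coefficients $b_r^{n-1,j}(f_m)$. Since $|\gamma_r|=q^{-(l(r)-1)}\leq1$ for $r\in\mathcal{R}_+$, for every such $r$ and every $0\leq j\leq n-1$ one has $|b_r^{n-1,j}(f_l)-b_r^{n-1,j}(f_m)|\leq|(b_r^{n-1,j}(f_l)-b_r^{n-1,j}(f_m))\gamma_r^{-1}|\leq|f_l-f_m|_n$ (and likewise at $r=0$, where $\gamma_0=1$). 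Hence each scalar sequence $\{b_r^{n-1,j}(f_m)\}_m$ is Cauchy in the complete field $K$, and I set $b_r^{n-1,j}(f):=\lim_m b_r^{n-1,j}(f_m)$. A Cauchy sequence is bounded, say $|f_m|_n\leq M$ for all $m$, and passing to the limit yields $\sup_{r,j}|b_r^{n-1,j}(f)\gamma_r^{-1}|\leq M<\infty$.

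The key step is to show that the formal series $f:=\sum_{r\in\mathcal{R}}\sum_{j=0}^{n-1}b_r^{n-1,j}(f)\gamma_r^{n-1-j}(x-r)^j\chi_r$ actually converges in $C^{n-1}(R,K)$. By Theorem \ref{main}(1) the family $\{\gamma_r^{n-1-j}(x-r)^j\chi_r\}$ is an orthonormal basis of $C^{n-1}(R,K)$, so the series converges there precisely when its coefficients tend to $0$, i.e.\ when for every $\varepsilon>0$ only finitely many pairs $(r,j)$ satisfy $|b_r^{n-1,j}(f)|\geq\varepsilon$. Here the boundedness of the normalized coefficients is decisive: from $|b_r^{n-1,j}(f)\gamma_r^{-1}|\leq M$ and $|\gamma_r|=q^{-(l(r)-1)}$ I obtain $|b_r^{n-1,j}(f)|\leq Mq^{-(l(r)-1)}$, which exceeds $\varepsilon$ only for the finitely many $r$ of bounded length (each $\mathcal{R}_m$ being finite); as $j$ ranges over the finite set $\{0,\dots,n-1\}$, the convergence criterion holds. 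Thus $f\in C^{n-1}(R,K)$, and by uniqueness of the expansion its coefficients are exactly the $b_r^{n-1,j}(f)$.

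Combining $f\in C^{n-1}(R,K)$ with the bound $\sup_{r,j}|b_r^{n-1,j}(f)\gamma_r^{-1}|<\infty$, Theorem \ref{charn-lip} gives $f\in Lip_n(R,K)$. Convergence $f_m\to f$ in $|\cdot|_n$ then follows in the usual way: fixing $m\geq N$ in the estimate $|f_l-f_m|_n<\varepsilon$ and letting $l\to\infty$ gives $|(b_r^{n-1,j}(f)-b_r^{n-1,j}(f_m))\gamma_r^{-1}|\leq\varepsilon$ for all $r,j$, whence $|f-f_m|_n\leq\varepsilon$. The only genuinely delicate point is the key step, and I expect it to be the main obstacle: unlike the $C^n$ case treated in Theorem \ref{main}(4), I do \emph{not} need the limits $\lim_{r\to a}b_r^{n-1,j}(f)\gamma_r^{-1}$ to exist, because being $n$-th Lipschitz only requires membership in $C^{n-1}$ together with boundedness, and that boundedness already forces the coefficients $b_r^{n-1,j}(f)$ themselves to decay like $|\gamma_r|$, which is exactly what guarantees convergence in $C^{n-1}(R,K)$.
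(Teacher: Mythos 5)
Your argument is correct. The paper's own proof has a different form but the same substance: it observes that, by Theorem \ref{charn-lip}, the map sending $f$ to the tuple of normalized coefficient sequences $\bigl[(b_r^{n-1,j}(f)\gamma_r^{-1})_{r}\bigr]_{0\leq j\leq n-1}$ is a norm-preserving isomorphism of $Lip_n(R,K)$ onto the Banach space $(l^{\infty}(\mathcal{R}))^n$, and completeness is inherited. Your direct Cauchy-sequence argument is the unpacked version of this. In particular, the key step you isolate --- boundedness of the normalized coefficients forces $|b_r^{n-1,j}(f)|\leq Mq^{-(l(r)-1)}$, hence the raw coefficients tend to $0$ and the candidate series converges in the Banach space $C^{n-1}(R,K)$, after which Theorem \ref{charn-lip} upgrades membership to $Lip_n(R,K)$ --- is precisely the surjectivity of the paper's coefficient map, which the paper asserts without spelling out. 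So your proof is a legitimate alternative that makes explicit a verification the paper leaves implicit; what the paper's packaging buys is brevity and the reuse of the standard completeness of $l^{\infty}$, while yours buys a self-contained argument parallel to Corollary \ref{BanachC^1} and Theorem \ref{main} (4), and you are right that no Moore--Osgood interchange is needed here, since $n$-th Lipschitz membership requires only boundedness of the normalized coefficients rather than existence of their limits.
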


\begin{proof}
Theorem \ref{charn-lip} shows that the correspondence
\begin{align*}
Lip_n(R,K)\to \left(l^{\infty}(\mathcal{R})\right)^n;\sum_{r\in\mathcal{R}}\sum_{j=0}^{n-1}b_{r}^{n-1,j}(f)\gamma_r^{n-1-j}(x-r)^j\chi_r\mapsto\left[(b_r^{n-1,j}(f)\gamma_r^{-1})_{r}\right]_{0\leq j\leq n-1}
\end{align*}
is a norm-preserving isomorphism of $Lip_n(R,K)$ with the Banach space $\left(l^{\infty}(\mathcal{R})\right)^n$ of direct product of all bounded functions on $\mathcal{R}$. Thus, $Lip_n(R,K)$ is complete.
\end{proof}

\subsection{Proof of Theorem \ref{samenorm}}

\begin{lem}
Let $n\geq1$ and $f\in C^n(R,K)$. We expand 
\begin{align*}
f=&\sum_{r\in\mathcal{R}}\sum_{j=0}^{n-1}b_{r}^{n-1,j}(f)\gamma_r^{n-1-j}(x-r)^j\chi_r\in C^{n-1}(R,K)
\end{align*}
and
\begin{align*}
f=&\sum_{r\in\mathcal{R}}\sum_{j=0}^{n}b_{r}^{n,j}(f)\gamma_r^{n-j}(x-r)^j\chi_r\in C^n(R,K)
\end{align*}
as elements of $C^{n-1}(R,K)$ and $C^n(R,K)$ respectively. Then for all $r\in\mathcal{R}_{+}$ and $0\leq j\leq n-1$, we have
\begin{align*}
b_{r}^{n-1,j}(f)\gamma_r^{-1}=b_r^{n,j}(f)+\dbinom{n}{j}\sum_{\substack{r'\in\mathcal{R}\\r'\triangleleft\ r_-}}b_{r'}^{n,n}(f),
\end{align*}
where $\lhd$ is defined in (\ref{deflhd}).
\end{lem}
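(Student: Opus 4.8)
The plan is to compute both coefficient families from the explicit formulas in Theorem \ref{main}(2) and to connect them through the recursion (\ref{defpsi_jf2}) for the functions $\psi_k$. Fix $r \in \mathcal{R}_+$ and $0 \le j \le n-1$. Applying Theorem \ref{main}(2) first in $C^{n-1}(R,K)$ and then in $C^n(R,K)$ gives $b_r^{n-1,j}(f)\gamma_r^{-1} = \psi_{n-1-j}D_jf(r,r_-)$ and $b_r^{n,j}(f) = \gamma_r\,\psi_{n-j}D_jf(r,r_-)$; these are meaningful because $D_jf \in C^{n-j}(R,K)$ by Remark \ref{rmkPhi}(5) and $n-j \ge 1$.

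First I would feed $g = D_jf$ into the recursion (\ref{defpsi_jf2}) with $x=r$, $y=r_-$, and multiply by $\gamma_r = r-r_-$, obtaining
\[
b_r^{n,j}(f) = \psi_{n-1-j}D_jf(r,r_-) - D_{n-j}D_jf(r_-) = b_r^{n-1,j}(f)\gamma_r^{-1} - D_{n-j}D_jf(r_-).
\]
The Leibniz-type identity (\ref{propertyD_j}), used with its index $j$ replaced by $n-j$ (allowed since $0 \le n-j \le n$), gives $D_{n-j}D_jf = \binom{n}{j}D_nf$, so the relation becomes
\[
b_r^{n-1,j}(f)\gamma_r^{-1} = b_r^{n,j}(f) + \binom{n}{j}D_nf(r_-).
\]
Thus the whole statement reduces to the telescoping identity $D_nf(r_-) = \sum_{r' \lhd r_-} b_{r'}^{n,n}(f)$, which I regard as the crux of the proof.

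To establish it, I would first record from Theorem \ref{main}(2) that $b_0^{n,n}(f) = D_nf(0)$ and, for $r' \in \mathcal{R}_+$, that $b_{r'}^{n,n}(f) = \gamma_{r'}\,\psi_0D_nf(r',r'_-) = D_nf(r') - D_nf(r'_-)$, since $\psi_0 = \Phi_1$ and $\gamma_{r'} = r'-r'_-$. The key structural observation is that $\{r' \in \mathcal{R} \mid r' \lhd r_-\}$ consists precisely of the initial truncations of the $\pi$-adic expansion of $r_-$, and that its distinct members form a $\lhd$-chain $0 = s_0 \lhd s_1 \lhd \cdots \lhd s_M = r_-$ with $(s_k)_- = s_{k-1}$ for every $k$. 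I would verify this directly from the definition (\ref{deflhd}) of $\lhd$ and the definition (\ref{defr_-}) of $r'_-$; the only delicate point is that consecutive distinct truncations can differ in length by more than one when intermediate digits of $r_-$ vanish, yet deleting the top nonzero digit of $s_k$ still returns $s_{k-1}$.

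With the chain in hand the sum collapses,
\[
\sum_{r' \lhd r_-} b_{r'}^{n,n}(f) = D_nf(s_0) + \sum_{k=1}^{M}\bigl(D_nf(s_k) - D_nf(s_{k-1})\bigr) = D_nf(s_M) = D_nf(r_-),
\]
the degenerate case $r_- = 0$ (that is, $r \in \mathcal{R}_1$) corresponding to the trivial chain $\{0\}$. Substituting this into the previous display proves the claim. The main obstacle is exactly this telescoping step: one must pin down $\{r' \lhd r_-\}$ as a $\lhd$-chain and check the collapse, whereas the translation between the $C^{n-1}$- and $C^n$-coefficients is a routine consequence of the $\psi$-recursion together with (\ref{propertyD_j}).
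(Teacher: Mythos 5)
Your proposal is correct and follows essentially the same route as the paper: both derive $b_r^{n-1,j}(f)\gamma_r^{-1}=b_r^{n,j}(f)+\binom{n}{j}D_{n}f(r_-)$ from the recursion (\ref{defpsi_jf2}) applied to $D_jf$ together with (\ref{propertyD_j}). The only cosmetic difference is in the last step: the paper identifies $D_nf=\sum_{r'}b_{r'}^{n,n}(f)\chi_{r'}$ and evaluates at $r_-$, whereas you compute $b_{r'}^{n,n}(f)=D_nf(r')-D_nf(r'_-)$ from Theorem \ref{main}(2) and telescope along the chain of truncations of $r_-$ — these are the same computation read in opposite directions.
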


\begin{proof}
Noting that $D_1\chi_r=0$ and $D_n(x^n)=1$, we have
\begin{align*}
D_nf(x)=\sum_{r\in\mathcal{R}}b_r^{n,n}(f)\chi_r(x).
\end{align*}
The definition (\ref{defpsi_jf2}) of $\psi_nf$ shows 
\begin{align*}
\psi_{n-1}f(x,y)=(x-y)\psi_{n}f(x,y)+D_nf(y).
\end{align*}
Hence, we have
\begin{align*}
b_r^{n-1,j}(f)\gamma_r^{-1}&=\psi_{n-1-j}D_jf(r,r_-)\\
&=\gamma_r\psi_{n-j}D_jf(r,r_-)+D_{n-j}D_jf(r_-)\\
&=b_r^{n,j}(f)+\dbinom{n}{j}D_nf(r_-)\\
&=b_r^{n,j}(f)+\dbinom{n}{j}\sum_{\substack{r'\in\mathcal{R}\\r'\triangleleft\ r_-}}b_{r'}^{n,n}(f).
\end{align*}
\end{proof}

\begin{cor}\label{normineq}
Let $n\geq1$ and $f\in C^{n+1}(R,K)$. Then $|f|_{n}\leq|f|_{n+1}$ holds.
\end{cor}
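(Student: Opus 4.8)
The plan is to read off the inequality directly from the Lemma immediately preceding the corollary, which expresses each normalized $(n-1)$-level coefficient in terms of the $n$-level coefficients of $f$. By the (inductive) definition of the norms we have $|f|_n=\sup_{r\in\mathcal{R},\,0\le j\le n-1}|b_r^{n-1,j}(f)\gamma_r^{-1}|$ and $|f|_{n+1}=\sup_{r\in\mathcal{R},\,0\le j\le n}|b_r^{n,j}(f)\gamma_r^{-1}|$, so it suffices to bound every quantity $|b_r^{n-1,j}(f)\gamma_r^{-1}|$ (with $0\le j\le n-1$) by $|f|_{n+1}$ and then take the supremum.

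First I would record two elementary facts. Since $\gamma_r=r-r_-$ satisfies $|\gamma_r|=q^{-(l(r)-1)}\le 1$ for every $r\in\mathcal{R}$ (and $\gamma_0=1$), we obtain
\[
\left|b_r^{n,j}(f)\right|=|\gamma_r|\,\left|b_r^{n,j}(f)\gamma_r^{-1}\right|\le |f|_{n+1}
\]
for all $r\in\mathcal{R}$ and all $0\le j\le n$; in particular $\sup_{r'}|b_{r'}^{n,n}(f)|\le |f|_{n+1}$. Second, each binomial coefficient $\binom{n}{j}$ is a rational integer, whence $\left|\binom{n}{j}\right|\le 1$.

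Now I apply the preceding Lemma: for $r\in\mathcal{R}_+$ and $0\le j\le n-1$,
\[
b_r^{n-1,j}(f)\gamma_r^{-1}=b_r^{n,j}(f)+\binom{n}{j}\sum_{\substack{r'\in\mathcal{R}\\ r'\lhd r_-}}b_{r'}^{n,n}(f).
\]
The sum on the right is finite, ranging over the finitely many initial segments $r'$ of $r_-$ (those with $r_-\in D_{r'}$), and equals $D_nf(r_-)$, so the non-Archimedean triangle inequality together with the two estimates above gives
\[
\left|b_r^{n-1,j}(f)\gamma_r^{-1}\right|\le \max\left\{\left|b_r^{n,j}(f)\right|,\ \left|\binom{n}{j}\right|\max_{r'\lhd r_-}\left|b_{r'}^{n,n}(f)\right|\right\}\le |f|_{n+1}.
\]
For the remaining index $r=0$ one has $\gamma_0=1$ and $b_0^{n-1,j}(f)=D_jf(0)=b_0^{n,j}(f)$, so $|b_0^{n-1,j}(f)\gamma_0^{-1}|=|b_0^{n,j}(f)|\le |f|_{n+1}$ directly. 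Taking the supremum over all $r\in\mathcal{R}$ and $0\le j\le n-1$ yields $|f|_n\le |f|_{n+1}$.

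I expect no genuine obstacle once the preceding Lemma is available; the only two points needing care are the passage between the normalized coefficients $b_r^{n,j}(f)\gamma_r^{-1}$ and the bare coefficients $b_r^{n,j}(f)$, which is legitimate precisely because $|\gamma_r|\le 1$, and the observation that the ``correction'' term $\binom{n}{j}D_nf(r_-)$ is itself controlled by $|f|_{n+1}$, since it is an ultrametric sum of top-level coefficients $b_{r'}^{n,n}(f)$ scaled by a factor of norm at most one. The characteristic hypothesis $n\le p-1$ plays no role in this particular estimate; it enters only implicitly, through the coefficient formulas of Theorem \ref{main} that the Lemma relies on.
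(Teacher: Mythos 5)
Your proof is correct and follows essentially the same route as the paper's: both apply the preceding Lemma to rewrite $b_r^{n-1,j}(f)\gamma_r^{-1}$ in terms of the level-$n$ coefficients, then use the ultrametric inequality together with $\left|\binom{n}{j}\right|\leq 1$ and $|\gamma_r|\leq 1$ to bound everything by $|f|_{n+1}$. The only difference is that you spell out the $r=0$ case and the two elementary estimates explicitly, which the paper leaves implicit.
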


\begin{proof}
We have that
\begin{align*}
|f|_{n}&=\sup_{\substack{r\in\mathcal{R}\\0\leq j\leq n-1}}\left\{\left|b_r^{n-1,j}(f)\gamma_r^{-1}\right|\right\}\\
&=\sup_{\substack{r\in\mathcal{R}_{+}\\0\leq j\leq n-1}}\left\{\left|D_jf(0)\right|, \left|b_r^{n,j}(f)+\dbinom{n}{j}\sum_{\substack{r'\in\mathcal{R}\\r' \triangleleft\ r_-}}b_{r'}^{n,n}(f)\right|\right\}\\
&\leq\sup_{\substack{r\in\mathcal{R}_{+}, r'\in\mathcal{R}\\0\leq j\leq n-1}}\left\{\left|D_jf(0)\right|, \left|b_r^{n,j}(f)\right|, \left|b_{r'}^{n,n}(f)\right|\right\}\\
&=\sup_{\substack{r\in\mathcal{R}_{+}\\0\leq j\leq n}}\left\{\left|D_jf(0)\right|, \left|b_r^{n,j}(f)\right|\right\}\\
&\leq\sup_{\substack{r\in\mathcal{R}_{+}\\0\leq j\leq n}}\left\{\left|D_jf(0)\right|, \left|b_r^{n,j}(f)\gamma_r^{-1}\right|\right\}\\
&=|f|_{n+1}.
\end{align*}
Here, the first equality follows from the definition (\ref{def of norm})  of $|f|_n$ and second and third equalities follow from Theorem \ref{main}.
%()の付け方
\end{proof}

\begin{proof}[Proof of Theorem \ref{samenorm}]
We will prove by induction on $n$. First, if we expand $f=\sum_{r\in\mathcal{R}}b_r(f)\chi_r\in C^1(R,K)$, then
\begin{align*}
\left|f\right|_{C^1}&=\max\left\{\left|f\right|_{\rm{sup}}, \left|\Phi_1f\right|_{\rm{sup}}\right\}\\
&=\max\left\{\sup_{r\in\mathcal{R}}\left\{\left|b_r(f)\right|\right\}, \sup_{r\in\mathcal{R}_{+}}\left\{\left|b_r(f)\gamma_r^{-1}\right|\right\}\right\}\\
&=\sup_{r\in\mathcal{R}}\left\{\left|b_r(f)\gamma_r^{-1}\right|\right\}\\
&=\left|f\right|_1
\end{align*}
by Theorem \ref{charn-lip}. Next, assume $|f|_{C^{n-1}}=|f|_{n-1}$ for all $f\in C^{n-1}(R,K)$. If we expand $f=\sum_{r\in\mathcal{R}}\sum_{j=0}^{n}b_{r}^{n,j}(f)\gamma_r^{n-j}(x-r)^j\chi_r\in C^n(R,K)$, then
\begin{align}\label{|f|_C^n}
\left|f\right|_{C^{n}}&=\max_{0\leq k\leq n}\left\{\left|\Phi_kf\right|_{\rm{sup}}\right\}\notag\\
&=\max\left\{\left|f\right|_{n-1}, \sup_{\substack{r\in\mathcal{R}_{+}\\0\leq j\leq n-1}}\left|b_r^{n-1,j}(f)\gamma_r^{-1}\right|\right\}.
\end{align}
Now, Corollary \ref{normineq} and the definition of $|f|_n$ imply (\ref{|f|_C^n}) $\leq|f|_n$. On the other hand, if $f\in C^k(R,K)$, we have $|D_{k-1}f|_{\rm{sup}}\leq|f|_{k-1}$by Lemma \ref{sup_k}, so
\begin{align*}
|f|_n&=\sup_{\substack{r\in\mathcal{R}_{+}\\0\leq j\leq n-1}}\left\{\left|D_{j}f(0)\right|, \left|b_r^{n-1,j}(f)\gamma_r^{-1}\right|\right\}\leq\text{(\ref{|f|_C^n})}.
\end{align*}
As a result, we obtain $|f|_{C^n}=|f|_n$ for all $n\geq1$.
\end{proof}

\section{characterizations of various functions}
In this section, we will show characterizations of three kinds of  functions in terms of the wavelet coefficients.

\begin{dfn}[{\cite[Section 24]{Sc84}}]
\begin{enumerate}
\item An element $x$ of $K$ is called positive if $|1-x|<1$. The group of all positive elements of $K$ is denoted by $K^{+}$. Let
\begin{align*}
{\rm sgn}:K^{\times}\to \Sigma:=K^{\times}/K^{+}
\end{align*}
be the canonical homomorphism.
\item Let $\alpha\in\Sigma$. A function $f:R\to K$ is called monotone of type $\alpha$ if for all $x,y\in R$, $x\neq y$, we have $f(x)\neq f(y)$ and
\begin{align*}
{\rm sgn}(f(x)-f(y))=\alpha\ {\rm sgn}(x-y).
\end{align*}
Such an $f$ is called increasing if $\alpha$ is the identity element of $\Sigma$.
\end{enumerate}
\end{dfn}

\begin{prop}[{\cite[Section 24]{Sc84}}]\label{incre}
Let $f:R\to K$. Then $f$ is increasing if and only if for all $x,y\in R$, $x\neq y$
\begin{align*}
\left|\frac{f(x)-f(y)}{x-y}-1\right|<1.
\end{align*}
More generally, if there exists an element $s\in K^{\times}$ such that for all $x,y\in R$, $x\neq y$
\begin{align*}
\left|s^{-1}\frac{f(x)-f(y)}{x-y}-1\right|<1,
\end{align*}
then $f$ is monotone of type $\alpha$ where $\alpha={\rm sgn}\ s$.
\end{prop}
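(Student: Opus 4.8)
The plan is to reduce everything to a single dictionary between the sign relation and the stated norm inequality, after which both assertions become immediate. The key observation is that for $u, v \in K^{\times}$ one has $\mathrm{sgn}(u) = \mathrm{sgn}(v)$ if and only if $uv^{-1} \in K^{+}$, which by the definition of positivity is exactly $|1 - uv^{-1}| < 1$. This rests on the fact that $K^{+}$ is a multiplicative subgroup of $K^{\times}$: if $|1-x|<1$ then necessarily $|x|=1$, from which closure under products (via the ultrametric inequality $|1-xy| \le \max\{|1-x|,\, |x|\,|1-y|\}$) and under inverses ($|1-x^{-1}| = |x^{-1}|\,|x-1| = |x-1|$) both follow. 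Since $\Sigma = K^{\times}/K^{+}$ and $\mathrm{sgn}$ is the canonical homomorphism, this is what makes $\mathrm{sgn}$ well defined and multiplicative, and it is the only genuinely structural input.

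For the first equivalence I would fix distinct $x, y \in R$ and apply the dictionary with $u = f(x)-f(y)$ and $v = x-y$. By definition $f$ is increasing precisely when $\alpha$ is the identity of $\Sigma$, i.e. $\mathrm{sgn}(f(x)-f(y)) = \mathrm{sgn}(x-y)$ for all $x \ne y$; since $\mathrm{sgn}$ is a homomorphism this is the same as
\[
\mathrm{sgn}\!\left(\frac{f(x)-f(y)}{x-y}\right) = 1,
\]
that is, $\frac{f(x)-f(y)}{x-y} \in K^{+}$, which by the dictionary is the inequality $\left|\frac{f(x)-f(y)}{x-y} - 1\right| < 1$. One subtlety worth recording explicitly is that this strict inequality already forces $\frac{f(x)-f(y)}{x-y} \ne 0$, hence $f(x) \ne f(y)$; so the nonvanishing clause in the definition of monotonicity is automatic and need not be assumed separately.

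For the general statement I would argue in the same spirit. Given $s \in K^{\times}$ with $\left|s^{-1}\frac{f(x)-f(y)}{x-y} - 1\right| < 1$ for all $x \ne y$, the dictionary gives $s^{-1}\frac{f(x)-f(y)}{x-y} \in K^{+}$, whence $\mathrm{sgn}\!\left(\frac{f(x)-f(y)}{x-y}\right) = \mathrm{sgn}(s)$. Multiplying by $\mathrm{sgn}(x-y)$ and using multiplicativity of $\mathrm{sgn}$ yields $\mathrm{sgn}(f(x)-f(y)) = \mathrm{sgn}(s)\,\mathrm{sgn}(x-y)$, which is exactly the condition that $f$ be monotone of type $\alpha = \mathrm{sgn}(s)$ (with $f(x)\neq f(y)$ again automatic). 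There is no real obstacle here: the content is entirely in setting up the dictionary of the first paragraph, and once the group structure of $K^{+}$ is in hand the remaining steps are formal translations between the multiplicative relation in $\Sigma$ and the additive-looking norm inequality.
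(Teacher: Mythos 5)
Your proof is correct. Note that the paper does not prove this proposition at all---it is quoted from Schikhof's book (Section 24) as a known fact---so there is no in-paper argument to compare against; your reduction to the dictionary ``$u/v\in K^{+}$ iff $|uv^{-1}-1|<1$'' together with the verification that $K^{+}$ is a subgroup of $K^{\times}$ (forcing $|x|=1$ and then using the ultrametric inequality) is exactly the standard argument, and your observation that the strict inequality automatically yields $f(x)\neq f(y)$ correctly handles the nonvanishing clause in the definition of monotonicity.
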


\begin{prop}\label{charincre}
Let $f=\sum_{r\in\mathcal{R}}b_r(f)\chi_r\in C(R,K)$. Then $f$ is increasing if and only if $|b_r(f)\gamma_r^{-1}-1|<1$ for each $r\in\mathcal{R}_{+}$. Generally, $f$ is monotone of type $\alpha$ if and only if $|s^{-1}b_r(f)\gamma_r^{-1}-1|<1$ for each $r\in\mathcal{R}_{+}$ where ${\rm sgn} (s)=\alpha$.
\end{prop}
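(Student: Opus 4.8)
The plan is to translate both equivalences into statements about the first difference quotient $\Phi_1 f$ and then invoke the propagation lemma of Section~2. The starting point is the identity (\ref{fracb_rgamma}), namely $b_r(f)\gamma_r^{-1} = \Phi_1 f(r, r_-)$ for every $r \in \mathcal{R}_+$; since $r \neq r_-$ whenever $r \in \mathcal{R}_+$ (because $\gamma_r = r - r_- \neq 0$), the coefficient condition is exactly a condition on the values of $\Phi_1 f$ on the special pairs $(r, r_-)$. By Proposition~\ref{incre}, being increasing is equivalent to $|\Phi_1 f(x,y) - 1| < 1$ for all distinct $x, y \in R$, so the entire statement is about passing between ``all pairs $(x,y)$'' and ``the pairs $(r, r_-)$''.

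For the increasing case I would argue as follows. The forward implication is immediate: if $|\Phi_1 f(x,y) - 1| < 1$ holds for all distinct $x, y$, then specializing to $(x, y) = (r, r_-)$ gives $|b_r(f)\gamma_r^{-1} - 1| < 1$. For the converse I would apply Lemma~\ref{keylem1} with $B = R$ and $S = \{z \in K : |z - 1| < 1\}$ (both are balls; indeed $S = K^{+}$). The hypothesis ``$r, r_- \in B$'' is automatic since $B = R$, so the assumption $\Phi_1 f(r, r_-) \in S$ for all $r \in \mathcal{R}_+$ is precisely $|b_r(f)\gamma_r^{-1} - 1| < 1$; the lemma then yields $\Phi_1 f(x,y) \in S$ for all distinct $x, y \in R$, which is the increasing criterion of Proposition~\ref{incre}.

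For the general monotone case I would reduce to the increasing case by scaling. Writing $g = s^{-1} f$, linearity of $\Phi_1$ and of the coefficient map gives $\Phi_1 g = s^{-1}\Phi_1 f$ and $b_r(g)\gamma_r^{-1} = s^{-1} b_r(f)\gamma_r^{-1}$. Using that ${\rm sgn} : K^{\times} \to \Sigma$ is a homomorphism, one checks that $f$ is monotone of type $\alpha = {\rm sgn}(s)$ if and only if ${\rm sgn}(\Phi_1 f(x,y)) = \alpha$ for all distinct $x,y$, i.e. if and only if $|s^{-1}\Phi_1 f(x,y) - 1| < 1$ for all such $x,y$, which is exactly the assertion that $g$ is increasing. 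Applying the increasing case already established to $g$ converts this into $|b_r(g)\gamma_r^{-1} - 1| < 1$, i.e. $|s^{-1} b_r(f)\gamma_r^{-1} - 1| < 1$, for all $r \in \mathcal{R}_+$.

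I expect the only genuine content to lie in the converse of the increasing case, which rests entirely on Lemma~\ref{keylem1}; everything else is the dictionary (\ref{fracb_rgamma}) together with formal manipulations of the sign homomorphism. The point to handle with care is the equivalence ``$f$ monotone of type $\alpha$'' $\iff$ ``$g = s^{-1}f$ increasing'': the second part of Proposition~\ref{incre} supplies one implication, while the other I must supply by the direct computation ${\rm sgn}(\Phi_1 f(x,y)) = {\rm sgn}(f(x) - f(y))\,{\rm sgn}(x - y)^{-1} = \alpha$, valid for any $f$ monotone of type $\alpha$.
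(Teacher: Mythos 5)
Your proposal is correct and follows essentially the same route as the paper: reduce the monotone case to the increasing case by passing to $s^{-1}f$, use the dictionary $b_r(f)\gamma_r^{-1}=\Phi_1f(r,r_-)$ for the forward direction, and apply Lemma \ref{keylem1} with $S=K^{+}$ for the converse. Your extra care in noting that Proposition \ref{incre} supplies only one implication of ``$f$ monotone of type $\alpha$ $\iff$ $s^{-1}f$ increasing'' and filling in the other by a direct sign computation is a small refinement of a step the paper simply asserts.
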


\begin{proof}
If $f$ is monotone of type $\alpha$ and ${\rm sgn}(s)=\alpha$, $s^{-1}f$ is increasing, hence we may assume that $f$ is increasing. Then, Proposition \ref{incre} and the fact that $f(r)-f(r_-)=b_r(f)$ for all $r\in\mathcal{R}_{+}$ show that $|\Phi_1f(r,r_-)-1|=|b_r(f)\gamma_r^{-1}-1|<1$ for all $r\in\mathcal{R}_{+}$.
Conversely, if $\Phi_1f(r,r_-)$ is positive for all $r\in\mathcal{R}_{+}$, then $f$ is increasing by Lemma \ref{keylem1}.
\end{proof}

We recover \cite[Exercise 63.E]{Sc84} as a special case of Proposition \ref{charincre}.

\begin{dfn}[{\cite[Definition 76.6]{Sc84}}]
A function $f:R\to K$ is called pseudocontraction if $|\Phi_1f(x,y)|<1$ for all $x,y\in R$, $x\neq y$.
\end{dfn}

\begin{cor}\label{pseudocontra}
Let $f=\sum_{r\in\mathcal{R}}b_r(f)\chi_r\in C(R,K)$. A function $f$ is a pseudocontraction if and only if $|b_r(f)\gamma_r^{-1}|<1$ for all $r\in\mathcal{R}^{+}$.
\end{cor}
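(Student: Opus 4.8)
The plan is to mimic exactly the structure of the proof of Proposition \ref{charincre}, since a pseudocontraction is characterized by the single condition $|\Phi_1f(x,y)|<1$ rather than the affine-shifted condition $|\Phi_1f(x,y)-1|<1$. The central observation is the identity $b_r(f)\gamma_r^{-1}=\Phi_1f(r,r_-)$ recorded in (\ref{fracb_rgamma}), which translates the wavelet-coefficient condition directly into a statement about the difference quotient on the special pairs $(r,r_-)$.

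For the forward implication I would specialize: if $f$ is a pseudocontraction, then $|\Phi_1f(x,y)|<1$ for every pair of distinct $x,y\in R$. Applying this to the pair $(r,r_-)$ (which are distinct for $r\in\mathcal{R}_+$) and invoking (\ref{fracb_rgamma}) yields $|b_r(f)\gamma_r^{-1}|=|\Phi_1f(r,r_-)|<1$ at once.

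For the converse, the key step is a single application of Lemma \ref{keylem1}. Taking the ball $B=R$ and the ball $S=\{x\in K\mid |x|<1\}$, the hypothesis $|b_r(f)\gamma_r^{-1}|<1$ for all $r\in\mathcal{R}_+$ says precisely that $\Phi_1f(r,r_-)\in S$ for every $r\in\mathcal{R}_+$ (again via (\ref{fracb_rgamma})); since $B=R$, the constraint $r,r_-\in B$ is automatic. Lemma \ref{keylem1} then promotes this to $\Phi_1f(x,y)\in S$, i.e.\ $|\Phi_1f(x,y)|<1$, for all distinct $x,y\in R$, which is exactly the pseudocontraction condition.

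I expect there to be essentially no genuine obstacle: the only point requiring a moment's care is recognizing that the open unit ball $S=\{|x|<1\}$ and the whole ring $R$ both qualify as the "balls" to which Lemma \ref{keylem1} applies, so that the lemma upgrades the coefficient bounds on the pairs $(r,r_-)$ to a uniform bound over all distinct pairs. As with Proposition \ref{charincre}, the converse direction is where the lemma does the real work, while the forward direction is a trivial specialization.
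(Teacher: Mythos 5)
Your proof is correct, but it takes a different route from the paper's. The paper deduces the corollary from Theorem \ref{charn-lip} with $n=1$: a pseudocontraction is a ($1$st) Lipschitz function with $A_f<1$, so $\sup_{r\in\mathcal{R}_+}\{|b_r(f)\gamma_r^{-1}|\}=A_f<1$, and conversely the finiteness of that supremum makes $f$ Lipschitz with constant $\sup_{r}\{|b_r(f)\gamma_r^{-1}|\}<1$. You instead bypass the Lipschitz machinery entirely and apply Lemma \ref{keylem1} once, with $B=R$ and $S=\{x\in K\mid |x|<1\}$, exactly mirroring the converse direction of Proposition \ref{charincre}; this is legitimate since both are balls in the sense of the lemma and (\ref{fracb_rgamma}) translates the coefficient condition into $\Phi_1f(r,r_-)\in S$. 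Your version is more self-contained (it uses only the Section 2 key lemma, which is also what ultimately underlies Theorem \ref{charn-lip} via Corollary \ref{keycor}), and your forward direction is a pure pointwise specialization, whereas the paper's forward step tacitly uses the discreteness of the valuation to pass from ``$|\Phi_1f(x,y)|<1$ for all pairs'' to ``$A_f<1$''. What the paper's route buys in exchange is brevity once Theorem \ref{charn-lip} is available, together with the sharper quantitative fact $A_f=\sup_{r\in\mathcal{R}_+}\{|b_r(f)\gamma_r^{-1}|\}$.
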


\begin{proof}
A pseudocontraction $f$ is Lipschitz and its Lipschitz constant $A_f$ is less than $1$. Thus, Theorem \ref{charn-lip} implies $\sup_{r\in\mathcal{R}_{+}}\{|b_r(f)\gamma_r^{-1}|\}<1$. Conversely, if $\sup_{r\in\mathcal{R}_{+}}\{|b_r(f)\gamma_r^{-1}|\}<1$, $f$ is Lipschitz by Theorem \ref{charn-lip}. Hence $f$ is a pseudocontraction.
\end{proof}

A function $f$ is called an isometry if $|\Phi_1f(x,y)|=1$ for all $x,y\in R$, $x\neq y$. For example, an increasing function is an isometry by Proposition \ref{incre}.

%2021.9.27安藤修正ーーーーーー
\begin{prop}\label{iso}
Let $f=\sum_{r\in\mathcal{R}}b_r(f)\chi_r\in C(R,K)$. A function $f$ is an isometry if and only if $|b_r(f)\gamma_r^{-1}|=1$ for all $r\in\mathcal{R}^{+}$ and $|b_{r_1}(f)-b_{r_2}(f)|=|\pi^{l(r_1)-1}|$ for all $r_1, r_2\in\mathcal{R}^{+}$ with $r_1\neq r_2$, $({r_1})_-=({r_2})_-$.
\end{prop}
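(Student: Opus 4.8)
The plan is to reformulate the isometry condition as $|f(x)-f(y)|=|x-y|$ for all distinct $x,y\in R$ and to exploit the identity $\Phi_1f(r,r_-)=b_r(f)\gamma_r^{-1}$ together with $|\gamma_r|=|r-r_-|=|\pi^{l(r)-1}|=q^{-(l(r)-1)}$. For the forward implication, I would suppose $f$ is an isometry. Taking $(x,y)=(r,r_-)$ gives $|b_r(f)\gamma_r^{-1}|=|\Phi_1f(r,r_-)|=1$, which is the first stated condition. For the second, if $r_1\neq r_2$ share the common parent $s=(r_1)_-=(r_2)_-$, then $b_{r_1}(f)-b_{r_2}(f)=f(r_1)-f(r_2)$, and the isometry hypothesis yields $|b_{r_1}(f)-b_{r_2}(f)|=|\Phi_1f(r_1,r_2)|\,|r_1-r_2|=|r_1-r_2|$. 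Writing each $r_i$ as $s$ plus its leading digit times $\pi^{l(r_i)-1}$ (the leading digits being nonzero and distinct, hence differing by a unit) gives $|r_1-r_2|=|\pi^{l(r_1)-1}|$, as required.

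For the converse I would assume both conditions and establish $|f(x)-f(y)|=|x-y|$ in two halves. The upper bound $|f(x)-f(y)|\le|x-y|$ is immediate: the first condition gives $\sup_{r\in\mathcal{R}_+}|b_r(f)\gamma_r^{-1}|=1<\infty$, so by Theorem \ref{charn-lip} (with $n=1$) $f$ is a first Lipschitz function with $A_f=1$, i.e.\ $|\Phi_1f(x,y)|\le1$ for all distinct $x,y$. (Equivalently one may invoke Lemma \ref{keylem1} with $S=\{z\in K\mid|z|\le1\}$ and $B=R$.)

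The lower bound is the crux. Given distinct $x,y$, let $n$ be determined by $|x-y|=q^{-n}$, let $z$ be their common initial $\pi$-adic part (digits in positions $0,\dots,n-1$), and let $u,v\in\mathcal{R}$ be the truncations of $x,y$ to positions $0,\dots,n$, so $u=z+\alpha\pi^n$ and $v=z+\beta\pi^n$ with $\alpha\neq\beta$. Then $|x-u|,|y-v|\le q^{-(n+1)}<q^{-n}$, so the upper bound forces $|f(x)-f(u)|<q^{-n}$ and $|f(y)-f(v)|<q^{-n}$. I claim $|f(u)-f(v)|=q^{-n}$: if $\alpha,\beta\neq0$ then $u,v$ are distinct with common parent $z$, so the second condition gives $|f(u)-f(v)|=|u-v|=q^{-n}$; if one digit vanishes, say $v=z=u_-$, then $f(u)-f(v)=b_u(f)$ and the first condition gives $|f(u)-f(v)|=|b_u(f)|=|\gamma_u|=q^{-n}$. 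Decomposing $f(x)-f(y)=\bigl(f(x)-f(u)\bigr)+\bigl(f(u)-f(v)\bigr)+\bigl(f(v)-f(y)\bigr)$, the middle term strictly dominates the two outer terms in absolute value, so the ultrametric inequality yields $|f(x)-f(y)|=q^{-n}=|x-y|$, proving $f$ is an isometry.

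The hard part is this lower bound, and within it the case distinction on whether a differing digit is zero: the essential content of the second condition is that it pins down $|f(u)-f(v)|$ for genuine same-level siblings, while the degenerate case is absorbed by the first condition. The mechanism that makes everything work is the decomposition through the common truncations $u,v$, with the pseudocontraction estimate $|\Phi_1f|\le1$ used to discard the outer terms, reducing an arbitrary pair $(x,y)$ to a single controlled ``one-digit'' difference.
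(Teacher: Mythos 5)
Your proof is correct and follows essentially the same route as the paper's: both reduce $f(x)-f(y)$ to the difference $f(u)-f(v)$ of values at the level-$(m+1)$ truncations, evaluate that difference exactly via the same case split on whether a leading digit vanishes (condition 2 for genuine siblings, condition 1 when one digit is zero), and discard the remainder as having absolute value at most $q^{-(m+1)}$. The only cosmetic difference is that you bound the tails $f(x)-f(u)$ and $f(y)-f(v)$ through the Lipschitz estimate $|\Phi_1f|\leq 1$ obtained from Theorem~\ref{charn-lip}, whereas the paper reads the same bound directly off the wavelet expansion as a congruence modulo $\pi^{m+1}$.
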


\begin{proof}
If $f$ is an isometry, for all $r\in\mathcal{R}^{+}$
\begin{align*}
\left|b_r(f)\right|=\left|f(r)-f(r_-)\right|=\left|r-r_-\right|=\left|\gamma_r\right|.
\end{align*}
For all $r_1, r_2\in\mathcal{R}^{+}$ with $r_1\neq r_2$, $({r_1})_-=({r_2})_-$, 
\begin{align*}
\left|b_{r_1}(f)-b_{r_2}(f)\right|=\left|f(r_1)-f(r_2)\right|=\left|r_1-r_2\right|=\left|\pi^{l(r_1)-1}\right|.
\end{align*}
The next step is to show sufficiency. Let $x,y\in R$, $x\neq y$ and 
\begin{align*}
x&=c_0+c_1\pi+\cdots\\
y&=d_0+d_1\pi+\cdots.
\end{align*}
Now take $m\geq0$ which is $|x-y|=|\pi^m|$ (that is, $c_0=d_0, \cdots, c_{m-1}=d_{m-1}, c_m\neq d_m$). Then
\begin{align*}
f(x)-f(y)\equiv 
\begin{cases}
b_{c_0+\cdots+c_m\pi^m}(f)-b_{d_0+\cdots+d_m\pi^m}(f)&(c_m, d_m\neq0)\\
b_{c_0+\cdots+c_m\pi^m}(f)&(d_m=0)\\
b_{d_0+\cdots+d_m\pi^m}(f)&(c_m=0)
\end{cases}
\bmod{\pi^{m+1}}.
\end{align*}
In all cases, $|f(x)-f(y)|=|\pi^m|$, so $f$ is an isometry.
\end{proof}
%修正以上ーーーーーーーーーーーーー

The following example illustrates why the second condition is required in Proposition \ref{iso}.

\begin{ex}
Let $p$ be an odd prime, $R=\mathbb{Z}_p$, $\mathcal{R}=\mathbb{Z}$, and
\begin{align*}
f(x)=x+\chi_1=\sum_{r\in\mathcal{R}}\gamma_r\chi_r+\chi_1=2\chi_1+2\chi_2+3\chi_3+\cdots.
\end{align*}
We see that $f$ is not an isometry but $|b_r(f)\gamma_r^{-1}|=1$ for all $r\in\mathcal{R}^{+}$. On the other hand, 
\begin{align*}
g(x)=x+\chi_1-\chi_2=\sum_{r\in\mathcal{R}}\gamma_r\chi_r+\chi_1-\chi_2=2\chi_1+\chi_2+3\chi_3+\cdots
\end{align*}
is an isometry but is not increasing.
\end{ex}

\begin{bibdiv}
\begin{biblist}

\bib{BB10}{article}{
   author={Berger, Laurent},
   author={Breuil, Christophe},
   title={Sur quelques repr\'{e}sentations potentiellement cristallines de ${\rm
   GL}_2(\bold Q_p)$},
   journal={Ast\'{e}risque},
   number={330},
   date={2010},
   pages={155--211},
}

\bib{Co14}{article}{
   author={Colmez, Pierre},
   title={Le programme de Langlands $p$-adique},
   conference={
      title={European Congress of Mathematics},
   },
   book={
      publisher={Eur. Math. Soc., Z\"{u}rich},
   },
   date={2013},
   pages={259--284},
}	

\bib{dS16}{article}{
   author={de Shalit, Ehud},
   title={Mahler bases and elementary $p$-adic analysis},
   journal={J. Th\'{e}or. Nombres Bordeaux},
   volume={28},
   date={2016},
   number={3},
   pages={597--620},
}

\bib{DS94}{article}{
   author={De Smedt, Stany},
   title={The van der Put base for $C^n$-functions},
   journal={Bull. Belg. Math. Soc. Simon Stevin},
   volume={1},
   date={1994},
   number={1},
   pages={85--98},
}

\bib{Na18}{article}{
   author={Nagel, Enno},
   title={$p$-adic Fourier theory of differentiable functions},
   journal={Doc. Math.},
   volume={23},
   date={2018},
   pages={939--967},
}

\bib{Sc84}{book}{
   author={Schikhof, W. H.},
   title={Ultrametric calculus},
   series={Cambridge Studies in Advanced Mathematics},
   volume={4},
   publisher={Cambridge University Press, Cambridge},
   date={1984},
   pages={viii+306},
}

\end{biblist}
\end{bibdiv}

\vspace{10pt}
\noindent
Mathematical Institute, Graduate School of Science, Tohoku University,\\
6-3 Aramakiaza, Aoba, Sendai, Miyagi 980-8578, Japan.\\
E-mail address: \textbf{hiroki.ando.s8@dc.tohoku.ac.jp}\\

\noindent
Mathematical Institute, Graduate School of Science, Tohoku University,\\
6-3 Aramakiaza, Aoba, Sendai, Miyagi 980-8578, Japan.\\
E-mail address: \textbf{yu.katagiri.s3@dc.tohoku.ac.jp}

\end{document}